\def\<{\langle}
\def\>{\rangle}
\def\Chi{\raise .3ex
\hbox{\large $\chi$}}
\def\({\Bigl (}
\def\){\Bigr )}
\newcommand{\be}{\begin{equation}}
\newcommand{\p}{\partial}
\newcommand{\1}{{\mathchoice {\rm 1\mskip-4mu l} {\rm 1\mskip-4mu l}{\rm 1\mskip-4.5mu l} {\rm 1\mskip-5mu l}}}
\newcommand{\f}{\frac}
\newcommand{\ee}{\end{equation}}
\newcommand{\bea}{$$ \begin{array}{lll}}
\newcommand{\eea}{\end{array} $$}
\newcommand{\bi}{\begin{itemize}}
\newcommand{\ei}{\end{itemize}}
\newcommand{\LB}{\LL_B}
\newcommand{\NB}{\NN_B}
\newcommand{\GB}{\GG_B}
\newcommand{\NN}{\mathcal N}
\newcommand{\GG}{\mathcal G}
\newcommand{\LL}{\mathcal L}
\newtheorem{prop}{Proposition}
\newtheorem{corollary}{Corollary}
\newtheorem{lemma}{Lemma}
\newtheorem{remark}{Remark}
\DeclareMathOperator{\R}{{\mathbb R}}
\DeclareMathOperator{\argmin}{argmin}
\DeclareMathOperator{\argmax}{argmax}
\renewenvironment{proof}{\noindent{\bf Proof.}}{\hfill
  $\blacksquare$\par\noindent}
\title{Estimating the division rate from indirect measurements \\  of single cells}
\author{Marie Doumic\footnote{Sorbonne Universit\'{e}, Inria, Universit\'{e} Paris-Diderot, CNRS,  Laboratoire Jacques-Louis Lions, F-75005 Paris, France. Email adress: marie.doumic@inria.fr} \and Ad\'ela\"ide Olivier\footnote{Laboratoire de Math\'ematiques d'Orsay, Univ. Paris-Sud, CNRS, Universit\'e Paris-Saclay, 91405 Orsay, France. Email adress: adelaide.olivier@u-psud.fr} \and Lydia Robert}
\begin{document}


\maketitle


\begin{abstract}
Is it possible to estimate the dependence of a growing and dividing population on a given trait in the case where this trait is not directly accessible by experimental measurements, but making use of measurements of another variable? This article adresses this general question for a very recent and popular model describing  bacterial growth, the so-called \emph{incremental} or \emph{adder} model. In this model, the division rate  depends on the \emph{increment of size} between birth and division, whereas the most accessible trait is the size itself. We prove that estimating the division rate from size measurements is possible, we state a reconstruction formula in a deterministic and then in a statistical setting, and solve numerically the problem on simulated and experimental data. Though this represents a severely ill-posed inverse problem, our numerical results prove to be satisfactory.
\end{abstract}
%

\section*{Introduction}
The field of structured population equations has attracted much interest for more than sixty years, leading to substantial progress  in their mathematical understanding. These equations describe a population dynamics in terms of well-chosen traits, which offer  a relevant characterization of the individual behaviour.  More recently, thanks to  considerable progress in experimental measurements, the question of estimating the parameters from single-cell measurements also attracts a growing interest, since it finally allows  comparing models model and data, and thus investigating which variable is biologically relevant as a structuring variable - see for instance~\cite{DHKR2} for the application to age-structured and size-structured models for bacterial growth. 

However, the so-called \emph{structuring} variable of the model may be quite abstract ("maturity", "satiety"...), and/or not directly measurable, whereas the quantities that are effectively measured may be linked to the structuring one in an unknown or intricate manner. As an illustration of this idea, we can cite the interesting  series of articles by H.T. Banks and co-authors, concerning the estimation of the division rate in data sets where the measured quantity was the fluorescence (carboxyfluorescein succinimidyl ester (CFSE)) of the cells. Initially, they designed a fluorescence-structured model~\cite{banks2011estimation}, but then the estimated division rates  appeared  difficult to interpret biologically. Indeed, the fluorescence was artificially added to the cells, thus it was not \emph{structuring}: the difficulty was to find out \emph{which} variable was really structuring, and how it was related to the measured quantities. This was done successfully by this group by building a model structured in both the true structuring variable - the so-called "cyton model" - and in the label, \emph{i.e.} the measured quantity~\cite{banks2012division}.

From such considerations we can formulate a general question: is it possible to estimate the dependence of a population on a given variable, which is not experimentally measurable, by taking advantage of the measurement of  another variable? 

In this article, we address this  question in a specific setting, namely the growth and division of bacteria. Recently, it was evidenced that for several types of bacteria and yeast cells, the "increment of size", \emph{i.e.} the increase of size of a cell between its birth and its division, provides a better-fitted model than age- or size-structured models~\cite{campos2014constant,eun2018archaeal,fievet2015single,Jun_2015}. 
These studies were based on data obtained by time-lapse microscopy and consisting in measurements of single-cells growing and dividing. Such data allows estimating for each cell its lifetime, its size at birth and at division, and its size-evolution through time. We refer to this kind of data as "measurements of dividing cells".

Comparison of models and data, such as performed in the above-mentioned studies, requires time-lapse microscopy data obtained in finely controlled conditions ensuring stable, steady-state growth. In addition, precise image analysis is also required to obtain accurate size measurements of numerous single-cells.
Obtaining such data is therefore not straight-forward and can be time-consuming. This can represent an important limitation, for instance for screening strategies where data has to be obtained in many different bacterial strains or experimental conditions. Here we consider the case of data consisting only in instantaneous size measurements of single-cells in a population. Such measurements can be more easily obtained, by microscopy snapshots, or using a flow cytometer or a coulter counter which both allow high-throughput acquisition.

From such data, the question of estimating the division rate in a size-structured model has been studied in a series of papers,  in a deterministic~\cite{BDE,DPZ,PZ} or statistical~\cite{DHRR} setting. The rates of convergence for the estimates have been proved to correspond to an inverse problem of degree of ill-posedness one, hence worse than the rate of convergence obtained from measurements of dividing cells (corresponding, in a deterministic setting, to a degree of ill-posedness zero, see~\cite{DHKR1} for a discussion of this heuristics). 

In view of the new biological evidence in favour of the incremental model~\cite{cadart2018size,deforet2015cell, eun2018archaeal,soifer2016single,Jun_2015}, this article is devoted to the same question as in~\cite{PZ} and following articles, but for the incremental model:  Can we  estimate an increment-dependent division rate from a measurement of the size-distribution of cells? Though formulated in a similar way, this new problem is much more complex, since the observed variable (instantaneous size) is not the structuring variable (size increment). 

Let us now give a mathematical definition of the problem under study. First of all, we  recall the increment-structured model.

\subsection*{The incremental model for  bacterial growth}

Let us denote $u(t,a,x)$ the density of cells at time $t$ of size $x$ which have an increment $a=x-y$ between their actual size $x$ and their size at birth $y$. We denote this increment $a$ as it may be viewed as a kind of  age, since it increases monotonically and starts at zero at birth - but an age that would have a link with the size: if $g(x)$ denotes the growth rate of a cell of size $x$, its "aging rate" is also $g(x).$ We have the following increment-and-size model, as proposed in Taheri-Araghi {\it et al.}~\cite{Jun_2015} for bacteria, and also designed in a different context by Hall {\it et al.} in~\cite{Hall1991} :
\begin{eqnarray}\label{eq:main}
\f{\p}{\p t} u(t,a,x) + \f{\p}{\p a} (g(x)u(t,a,x))+\f{\p}{\p x} (g(x)u(t,a,x)) + g(x)B(a,x)u(t,a,x)=0, \\ \nonumber \\
g(x)u(t,0,x)=4 \int\limits_0^\infty g(2x) B(a,2x)u(t,a,2x)da,\qquad g(0)u(t,a,0)=0,\qquad u(0,a,x)=u^{in}(a,x).
\end{eqnarray}
The instantaneous probability to divide is, as in~\cite{Hall1991}, $g(x)B(a,x)$ for a cell of size-increment $a$ and size $x$. From a modelling point of view, writing the division rate as the product of $g$ and a function $B$ allows us to interpret $B$ as the instantaneous probability to divide \emph{in a unit of growth} instead of a unit of time. This is coherent with the fact that the cell may ignore the "time" and use its growth as a clock. As will be explained below, in the case proposed by  
 Taheri-Araghi {\it et al.}~\cite{Jun_2015} where $B(a,x)=B(a)$, this is also  coherent with a much simpler and more natural underlying piecewise deterministic Markov process (PDMP), where the time of division is a simple renewal process of jump rate $B(a)$, so that the increments of dividing cells are mutually independent and distributed according to the density $f_B(a) = B(a) \exp\big(-\int\limits_0^a B(s) ds \big)$.

\subsection*{Asymptotic behaviour of the incremental model}
Under suitable assumptions on the coefficients $g$ and $B$  (for instance the theorem 3.7. of \cite{D} may be  adapted and provides a proof for smooth growth and division rates, and most recently~\cite{gabriel:hal-01742140} studies the case $g(x)=x$, with fairly general division rate $B$),  
we  have a dominant eigentriplet $(\lambda,U,\phi)$ unique solution of
\begin{eqnarray}\label{eq:U}
 \lambda U(a,x) + \f{\p}{\p a} (g(x)U(a,x))+\f{\p}{\p x} (g(x)U(a,x)) + g(x)B(a)U(a,x)=0, \label{eq:U}\\ \nonumber \\
g(x)U(0,x)=4 \int\limits_0^\infty g(2x) B(a)U(a,2x)da,\quad g(0)U(a,0)=0,\quad  \label{eq:boundU}\\ \nonumber \\
\lambda \phi(a,x) - g(x)\f{\p}{\p a} \phi(a,x) -g(x) \f{\p}{\p x}  \phi(a,x) + g(x)B(a)\phi(a,x)=2 g(x)B(a)\phi(0,\f{x}{2}),\label{eq:phi}\quad\\ \nonumber \\
\lambda>0,\quad \phi\geq 0,\quad U\geq 0, \qquad \iint U(a,x)dadx=1,\qquad\iint U(a,x)\phi(a,x)dadx=1.\label{eq:intUphi}
\end{eqnarray}
Moreover, using for instance  the general relative entropy inequalities, we  have under some more assumptions (theorem 4.5. in~\cite{D})
$$\iint \vert u(t,a,x)e^{-\lambda t}- \bigl(\iint u^{in}(a,x)\phi(a,x)dadx \bigr) U(a,x)\vert \phi(a,x)dadx \to_{t\to\infty} 0.$$ 
Let us however notice that under the assumption made in~\cite{Jun_2015}, namely that $g(x)=x,$  this precise asymptotic behaviour fails to happen and a cyclic behaviour is observed, as already proved for the growth-fragmentation equation~\cite{bernard2016cyclic,Greiner198879}. This corresponds to an idealised case; in reality, there is always some variability, leading to a growth slightly different from perfectly exponential and to a division into two slightly unequal parts, see for instance~\cite{DHKR2}. From a numerical perspective, this slight "imperfection" has to be included, else the cyclic behaviour will perturb the results, see Section~\ref{sec:num} for more details.

\subsection*{Mathematical formulation of the inverse problem}
From now on, we denote $U$ by $U_B$ to underline the dependence in the unknown division rate.
Let us assume that we measure  the steady size-distribution, which is modeled by the marginal $U_{B,x}(x)=\int\limits_0^\infty U_B(a,x)da$. Such a measurement may be done for instance via a  sample of $n$ cells for which we measure their sizes $(x_1,\cdots,x_n),$ assumed to be the realization of an i.i.d. sample distributed along $U_{B,x},$ in the spirit of Doumic, Hoffmann, Reynaud-Bouret and Rivoirard~\cite{DHRR}. 

We also assume division into two equally-sized daughters, as modelled in~\eqref{eq:main}, and that $g(x)$ and $\lambda$ are already known from an independent measurement or previous knowledge. Typically, $\lambda$ may be measured through the time evolution of the total mass, which is classical in biology~\cite{monod1949growth}, and under the consensual assumption of exponential growth $g(x)=\tau x$,  we have $\lambda=\tau.$ In this article, we do not consider the noise in the measurement of $\lambda$ and $g,$ which could be included in future work.

The problem we want to solve in order to have a fully determined model is:

\begin{center}
\fbox{\begin{minipage}{0.6\textwidth}\begin{center}
Given $\big(\lambda,g(\cdot)\big)$  and given measurements of $x\to U_{B,x}(x),$

 can we estimate the division rate $a\to B(a)$?
\end{center}
\end{minipage}}
\end{center}

\

In Section~\ref{sec:rec}, we provide an explicit though intricate formula for the estimation of $B$ from $U_B$, without taking the noise into account. We then provide a statistical estimator in Section~\ref{sec:stat}, numerically implemented in Section~\ref{sec:num} both on simulated data and real data. These numerical results provides us with clues concerning directions for future work, which we comment in the discussion (Section \ref{sec:disc}).

\section{Reconstruction formula in a deterministic setting}
Before providing the reconstruction formula for $B,$ let us introduce some useful notation. As standard in the field of renewal processes, we introduce the probability distribution function $f_B$ and the survival function $S_B$ of the increments of dividing cells:
\begin{equation}\label{eq:deff_B}
f_B(a):=B(a)\exp(-\int\limits_0^a B(s)) ds,\qquad S_B(a):=\int\limits_a^\infty f_B(s)ds=e^{-\int\limits_0^a B(s) ds}.
\end{equation}
Symetrically, we introduce the size-distribution of dividing cells, that we denote $\LB$
\begin{equation} \label{eq:sizediv_def}
\LB(x) = \int\limits_0^x g(x)B(a) U_B(a,x)da.
\end{equation}
As shown below, the function~$\LB$ is an important intermediate to formulate $B$ from the measurement of $U_{B,x}.$ Though we cannot write it as an explicit function of $U_{B,x},$ it  can be obtained in a similar way as the distribution of dividing cells for the size-structured equation, see~\cite{DPZ}.
\begin{lemma}\label{lem:LB}
Let $g$ be a positive continuous function on $(0,\infty)$, $\lambda\geq 0,$ and $U_{B,x}$ a positive function on $(0,\infty)$ such that  $\lambda U_{B,x}+\f{d}{dx} (gU_{B,x}) \in L^2(x^p dx)$ with $p\in [0,\infty)\setminus \{3\}$.
Then there exists a unique solution $\LB \in L^2(x^p dx)$ such that
\begin{eqnarray} \label{eq:sizediv2}
\lambda U_{B,x}(x) + \f{d}{dx}(gU_{B,x}\big)(x) = 4 \LB(2x) - \LB(x),
\end{eqnarray}
and there exists $C_p>0$ such that
$$\Vert \LB\Vert_{L^2 (x^pdx)} \leq C_p \Vert \lambda U_{B,x}+\f{d}{dx} (gU_{B,x}) \Vert_{L^2 (x^pdx)} .$$
If moreover there exists $U_B \in W^{1,1}((0,\infty)\times (0,\infty))$  solution of the system~\eqref{eq:U}-\eqref{eq:boundU}  such that $U_{B,x}=\int\limits_0^\infty U_{B}(a,x)da,$ then this unique solution coincides with the size-distribution of dividing cells defined by~\eqref{eq:sizediv_def}.
\end{lemma}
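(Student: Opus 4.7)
My plan is to treat equation~\eqref{eq:sizediv2} as a dilation equation
\[
4\LB(2x)-\LB(x) = h(x), \qquad h := \lambda U_{B,x}+\tfrac{d}{dx}(gU_{B,x})\in L^2(x^p dx),
\]
and invert it by Mellin transform. Recall that, via the Mellin--Plancherel theorem, $Mf(s)=\int_0^\infty x^{s-1}f(x)\,dx$ realises an isometry (up to a factor of $2\pi$) from $L^2(x^{2c-1}\,dx)$ onto $L^2$ of the vertical line $\mathrm{Re}(s)=c$. I would pick $c=(p+1)/2$, which is the line adapted to the weight $x^p$, and use that the dilation $f(\cdot)\mapsto f(2\cdot)$ corresponds to multiplication by $2^{-s}$. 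The equation then becomes the algebraic relation
\[
(2^{2-s}-1)\,M\LB(s) = Mh(s).
\]

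Next I would check that the hypothesis $p\neq 3$ is equivalent to $c\neq 2$, so that $|2^{2-s}|=2^{2-c}\neq 1$ on the line; by the reverse triangle inequality this gives $|2^{2-s}-1|\geq |2^{2-c}-1|>0$ uniformly in the imaginary part. The multiplier $1/(2^{2-s}-1)$ is therefore bounded, and defining $M\LB(s):=Mh(s)/(2^{2-s}-1)$ produces a unique $L^2$ function on the line, whose inverse Mellin transform yields the sought $\LB\in L^2(x^p dx)$. Plancherel delivers the estimate with constant $C_p = 1/|2^{(3-p)/2}-1|$, and uniqueness follows from the same symbol argument: any difference of two $L^2(x^p dx)$-solutions would have a Mellin transform annihilated by $2^{2-s}-1$, hence would be zero.

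For the identification with the size-distribution of dividing cells, my plan is to integrate the eigenproblem~\eqref{eq:U} with respect to $a$ over $(0,\infty)$. Since $U_B\in W^{1,1}$ and $U_B(a,x)$ is supported in the physical region $\{a\leq x\}$, the contribution of $\partial_a(gU_B)$ reduces to the boundary term $-g(x)U_B(0,x)$; by the boundary relation~\eqref{eq:boundU} this equals $-4\LB(2x)$. The division term integrates to $\LB(x)$, and the $x$-transport commutes with integration in $a$ to give $\frac{d}{dx}(gU_{B,x})$. Rearranging recovers exactly~\eqref{eq:sizediv2}, so that $\LB$ defined by~\eqref{eq:sizediv_def} is an $L^2(x^p dx)$-solution, and by the uniqueness from the first step it must coincide with the one obtained by Mellin inversion.

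The only genuine obstacle is algebraic and appears in the second step: one has to pinpoint and exclude the critical weight $p=3$, where the Mellin symbol $2^{2-s}-1$ vanishes on a whole discrete sequence $s=2+2\pi i k/\ln 2$, so no $L^2$-bounded inverse can exist. Everything else---the Mellin--Plancherel machinery and the integration by parts in $a$---is routine once this symbol structure is recognised.
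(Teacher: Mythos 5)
Your proof is correct, but for the existence--uniqueness--stability part it takes a genuinely different route from the paper. The paper disposes of that part in one line by citing Proposition~A.1 of~\cite{DPZ}, which inverts the dilation operator $\LB\mapsto 4\LB(2\cdot)-\LB(\cdot)$ by an explicit Neumann (geometric) series in $L^2(x^p dx)$ --- $\sum_k 4^{-k}h(\cdot/2^k)$ for $p<3$, $-\sum_k 4^{k}h(2^k\cdot)$ for $p>3$. You instead diagonalise the same operator by the Mellin transform on the line $\mathrm{Re}(s)=(p+1)/2$, where it becomes multiplication by $2^{2-s}-1$, and you read off invertibility from the uniform lower bound $|2^{2-s}-1|\ge|2^{(3-p)/2}-1|>0$. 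These are two faces of the same computation (your constant $C_p=1/|2^{(3-p)/2}-1|$ is exactly the sum of the corresponding geometric series), but your version is self-contained, makes the exclusion of $p=3$ completely transparent --- the symbol vanishes precisely on $s=2+2\pi i k/\ln 2$ --- and gives uniqueness in $L^2(x^pdx)$ for free from the non-vanishing of the symbol. The identification step (integrating~\eqref{eq:U} in $a$, using~\eqref{eq:boundU} and the support property $a\le x$ to recover~\eqref{eq:sizediv2} with $\LB$ given by~\eqref{eq:sizediv_def}) is exactly the paper's argument. Two small points, shared with the paper, that you could make explicit: the boundary term at $a=\infty$ vanishes because $U_B\in W^{1,1}$, and invoking uniqueness to identify the two objects tacitly requires that the $\LB$ of~\eqref{eq:sizediv_def} itself belong to $L^2(x^pdx)$, which does not follow from the equation alone.
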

\begin{proof}
The existence, uniqueness and continuity part directly follows from~\cite{DPZ} Proposition A.1. If $U_B$ is solution to~\eqref{eq:U}-\eqref{eq:boundU} , we integrate Equation~\eqref{eq:U} along the increment $a$, use the boundary condition~\eqref{eq:boundU},  and obtain Equation~\eqref{eq:sizediv2} with $\LB$ defined by~\eqref{eq:sizediv_def}.
\end{proof}

With this lemma, we see  that the estimation of $\LB$ from $U_{B,x}$ is an inverse problem of degree of ill-posedness~1 when stated in a space $L^2(x^p dx)$ (degree 3/2 in the framework of a statistical noise~\cite{DHRR,NP}), as already known from~\cite{PZ}. Interestingly, we remark that this would remain true also if the division rate would depend on other structuring variables, as soon as the growth rate and the division kernel are known: this allows one to reconstruct the size distribution of dividing cells from the size distribution of all cells, in any framework. 

%
%

We are now ready to  formulate  $B$ in terms of $U_{B,x}$, $\LB,$ and the parameters $\lambda$ and $g.$ 
This is done in the next proposition.

In all what follows, we denote by $f^*$ the Fourier transform of a function $f$:
$$
f^*(\xi) = \int\limits_{-\infty}^{+\infty} f(x) e^{\bold{i}x\xi} dx.
$$

\begin{prop}\label{prop:rec}
Let $B$ and $g$ be such that there exists a unique positive eigentriplet  $(\lambda, U_B,\phi_B)$ solution of the eigenproblem~\eqref{eq:U}--\eqref{eq:intUphi}. Let us furthermore assume $\lambda U_{B,x}+\f{d}{dx} (gU_{B,x}) \in L^2(x^p dx)$ and define $\LB$ as the unique solution of Equation~\eqref{eq:sizediv2} given by Lemma~\ref{lem:LB}. 

We define $f_B$ and $S_B$ by~\eqref{eq:deff_B}. We define two intermediate functions $\NB$ and ${\GB}$  by
\begin{equation} \label{eq:G_def}
\GB (y)=4 e^{\lambda G(y)} \LB(2y), \qquad \NB (y)=g(y) e^{\lambda G(y)} U_{B,x}(y),
\end{equation} 
with $G(x)$ an anti-derivative of $1/g(x).$ We assume that $\NB^{*}$ and ${\GB^*}$ are the well-defined Fourier transforms of $\NB$ and ${\GB}$.
 
We have the following reconstruction formula for $B$ in terms of $\lambda,$ $U_{B,x}$ and $g:$
\begin{equation}\label{eq:B}
B(a) 
= \frac{f_B(a)}{S_B(a)} 
= \frac{\int\limits_{-\infty}^{+\infty}  \Big( 1 + \bold{i} \xi \frac{\NB^{*}(\xi)}{\GB^{*}(\xi)} \Big)e^{-\bold{i}a\xi} d\xi}{\int\limits_{a}^{+\infty} \bigg( \int\limits_{-\infty}^{+\infty}  \Big( 1 + \bold{i} \xi \frac{\NB^{*}(\xi)}{\GB^{*}(\xi)} \Big)e^{-\bold{i} s \xi} d\xi \bigg) ds},
\end{equation}
provided that all the inverse Fourier transforms are well defined and that neither $\GB^*$ nor the denominator vanishes.
\end{prop}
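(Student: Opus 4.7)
The plan is to integrate the eigenequation \eqref{eq:U} along characteristics, use the boundary condition \eqref{eq:boundU} to convert the result into a convolution identity on the half-line, and then invert this convolution in Fourier space.

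First I would observe that the transport field in $(a,x)$ has equal components $(g(x),g(x))$, so the size at birth $y := x-a$ is preserved along characteristics. Setting $V(a,y):=U_B(a,a+y)$ and integrating the resulting linear ODE in $a$ yields
\begin{equation*}
g(x)\,U_B(a,x) \;=\; g(x-a)\,U_B(0,x-a)\,S_B(a)\,\exp\!\bigl(-\lambda[G(x)-G(x-a)]\bigr),
\end{equation*}
where the exponential collects $\int_0^a \lambda/g(s+y)\,ds$ and $\int_0^a g'(s+y)/g(s+y)\,ds$, and $S_B(a)$ collects $\int_0^a B(s)\,ds$.

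Next I would use the boundary condition \eqref{eq:boundU}: since $U_B(a,2y)$ vanishes for $a>2y$ (the increment cannot exceed the current size), the condition reads $g(y)\,U_B(0,y) = 4\,L_B(2y)$ with $L_B$ as in \eqref{eq:sizediv_def}. Inserting this into the characteristic formula above and integrating in $a$ over $(0,x)$ produces
\begin{equation*}
g(x)\,e^{\lambda G(x)}\,U_{B,x}(x) \;=\; \int_0^x 4\,e^{\lambda G(x-a)}\,L_B(2(x-a))\,S_B(a)\,da,
\end{equation*}
which is exactly the convolution identity $N_B = G_B * S_B$ on $[0,\infty)$, with $N_B$ and $G_B$ as defined in \eqref{eq:G_def}.

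Finally, I would pass to Fourier variables. An integration by parts on $f_B=-S_B'$, using $S_B(0)=1$ and $S_B(\infty)=0$, gives the classical renewal identity $f_B^*(\xi)=1+\bold{i}\xi\,S_B^*(\xi)$. Combined with $N_B^*(\xi)=G_B^*(\xi)\,S_B^*(\xi)$, this yields
\begin{equation*}
f_B^*(\xi) \;=\; 1 + \bold{i}\xi\,\frac{N_B^*(\xi)}{G_B^*(\xi)},
\end{equation*}
and inverting the Fourier transform recovers $f_B$; dividing by $S_B(a)=\int_a^\infty f_B(s)\,ds$ produces the reconstruction formula \eqref{eq:B}, the $2\pi$ factors from Fourier inversion cancelling in the ratio. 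The hard part is not the algebra but its justification: one must check that $G_B$, $N_B$, $S_B$ simultaneously lie in function spaces where the Fourier transforms, the convolution identity, and division by $G_B^*$ are all meaningful. The integrability assumption on $\lambda U_{B,x}+(gU_{B,x})'$ together with Lemma~\ref{lem:LB} and the non-vanishing hypothesis on $G_B^*$ are tailored to this, but propagating enough regularity and decay from $U_B$ to $N_B$ and $G_B$ so that $N_B=G_B*S_B$ holds genuinely, rather than only formally, will require some care.
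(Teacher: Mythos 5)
Your proposal is correct and follows essentially the same route as the paper's own proof: characteristics for the eigenequation to express $g(x)U_B(a,x)$ via $U_B(0,x-a)$, the boundary condition $g(y)U_B(0,y)=4\LB(2y)$ to turn the integrated formula into the convolution $\NB=\GB\star S_B$, and then the Fourier identities $\NB^*=\GB^*S_B^*$ and $f_B^*=1+\bold{i}\xi S_B^*$ to obtain~\eqref{eq:B}. The caveats you raise about justifying the formal manipulations are exactly those the authors defer in their own remark.
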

\begin{corollary}Under the assumptions of Proposition~\ref{prop:rec}, if $g(x)=\tau x$ we have $\lambda=\tau$, $\GB(y)=4y\LB(2y)$ and $\NB(y)=\tau y^2 U_{B,x} (y).$\label{coroll1}
\end{corollary}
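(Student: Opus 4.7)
The plan is to solve the first-order PDE~\eqref{eq:U} by the method of characteristics, reduce the problem to a half-line Volterra convolution, and invert it by Fourier analysis.

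First I would run the characteristics. Since $g$ depends only on $x$, one has $\partial_a(gU_B)+\partial_x(gU_B)=g(x)(\partial_aU_B+\partial_xU_B)+g'(x)U_B$, and the characteristics of~\eqref{eq:U} are the lines $x-a=y$ with $y$ constant (both $a$ and $x$ advance at the common rate $g(x)$). Setting $V(t):=U_B(t,y+t)$ reduces~\eqref{eq:U} to the linear ODE $V'(t)+\bigl(\lambda/g(y+t)+g'(y+t)/g(y+t)+B(t)\bigr)V(t)=0$, whose explicit solution, after reverting to the $(a,x)$ variables, is
$$
U_B(a,x)=U_B(0,x-a)\,\frac{g(x-a)}{g(x)}\,e^{-\lambda(G(x)-G(x-a))}\,S_B(a),\qquad 0\le a\le x.
$$
Integrating this identity in $a\in[0,x]$, multiplying through by $g(x)e^{\lambda G(x)}$, performing the substitution $y=x-a$, and using the boundary condition $g(y)U_B(0,y)=4\LB(2y)$ together with the definitions~\eqref{eq:G_def}, one obtains the Volterra convolution
$$
\NB(x)=\int_0^x \GB(y)\,S_B(x-y)\,dy=(\GB\ast S_B)(x).
$$

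Next I would invert by Fourier analysis. Applying $^*$ gives $\NB^*=\GB^*\cdot S_B^*$, hence $S_B^*=\NB^*/\GB^*$ wherever $\GB^*\ne 0$. Because $f_B=-S_B'$ with $S_B(0)=1$, integration by parts on $[0,\infty)$ produces $f_B^*(\xi)=1+\mathbf{i}\xi\,S_B^*(\xi)=1+\mathbf{i}\xi\,\NB^*(\xi)/\GB^*(\xi)$. Inverse Fourier transform recovers $f_B$, then $S_B(a)=\int_a^{+\infty}f_B(s)\,ds$, and $B(a)=f_B(a)/S_B(a)$ is immediate from~\eqref{eq:deff_B}; the $1/(2\pi)$ prefactor of the inverse transform cancels between numerator and denominator, giving exactly~\eqref{eq:B}.

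For the corollary, $g(x)=\tau x$ yields $G(x)=\tau^{-1}\log x$ and hence $e^{\lambda G(x)}=x^{\lambda/\tau}$. The identification $\lambda=\tau$ comes from multiplying~\eqref{eq:sizediv2} by $x$ and integrating on $(0,\infty)$: an integration by parts converts $\int x \partial_x(\tau x U_{B,x})dx$ into $-\tau\int xU_{B,x}\,dx$, while the substitution $z=2x$ shows $4\int x\LB(2x)dx=\int x\LB(x)dx$, so the right-hand side vanishes and $(\lambda-\tau)\int xU_{B,x}\,dx=0$, forcing $\lambda=\tau$. Substituting back gives the announced $\GB(y)=4y\LB(2y)$ and $\NB(y)=\tau y^2U_{B,x}(y)$. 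The main obstacle is not the algebra but the analytic validity of the Fourier step: division by $\GB^*$ and inverse Fourier transform tacitly require $S_B$ to decay at infinity (so the integration by parts closes without boundary terms), $\GB^*$ to be non-vanishing on $\R$, and the right-hand side of~\eqref{eq:B} to be integrable. These hypotheses are built into the statement but conceal the severe ill-posedness of the underlying deconvolution, which must be addressed in the statistical and numerical sections.
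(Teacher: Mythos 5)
Your final paragraph is exactly the right argument for the corollary: with $g(x)=\tau x$ one has $G(x)=\tau^{-1}\ln x$, the first-moment computation on~\eqref{eq:sizediv2} (integration by parts plus the substitution $z=2x$) correctly forces $\lambda=\tau$ --- a fact the paper asserts without proof --- and substituting $e^{\lambda G(y)}=y$ into~\eqref{eq:G_def} gives $\GB(y)=4y\LB(2y)$ and $\NB(y)=\tau y^{2}U_{B,x}(y)$. The long preceding re-derivation of the characteristics, the convolution $\NB=\GB\star S_B$ and the Fourier inversion reproduces the paper's proof of Proposition~\ref{prop:rec} and is not actually needed for the corollary, but it is correct and matches the paper's approach.
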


\begin{remark} At this stage, the reconstruction formula is formal: to give it a rigorous meaning and ensure its validity, we would have to prove that all the quantities are well-defined, in particular that the Fourier transform $\GB^*$ never vanishes. This requires  a full study \emph{per se},  and is beyond the scope of this work: in another case study, this has been done for instance for the estimation of the fragmentation kernel of the growth-fragmentation equation in the article~\cite{doumic:hal-01501811}, using the Cauchy integral to prove that a Mellin transform never vanishes, proof adapted to another case in~\cite{hoang:hal-01623403}. For these two cases however, the proofs used strongly an explicit formulation of the solution with the use of Mellin or Fourier transforms, thanks to the fact that $B$ was a power law in~\cite{doumic:hal-01501811}, and constant in~\cite{hoang:hal-01623403}. We let it for future work.
\end{remark}

\

\begin{proof}
The aim is to use the classical formula
$B(a)=\f{f_B(a)}{S_B(a)},$
and to find a formulation for $f_B$ in terms of $U_{B,x}$, then express  $S_B$  as its integral. \\

\noindent {\it First step. Formulating $U_{B,x}$ in terms of $\lambda,$ $g$, $\LB$ and $B$.}

As done for the study of the eigenvalue problem carried out for instance in~\cite{D,gabriel:hal-01742140}, we can classically obtain a formulation of $U_B(a,x)$ in terms of $\LB.$ 
We first write Equation~\eqref{eq:boundU}  under the form
\begin{equation}\label{eq:Ubord}
g(x)U_B(0,x)=4 \LB(2x),
\end{equation}
and then use the method of characteristics to solve~\eqref{eq:U} and \eqref{eq:Ubord}. We define an intermediate function $C(a,x)=g(x)U_B(a,x)$ solution of
$$
\f{\p}{\p a} C(a,x)+\f{\p}{\p x} C(a,x) + \big(\f{\lambda}{g(x)} + B(a)\big)C(a,x)=0, 
$$
we define $\tilde C(a,x) =C(a,x+a)e^{\int\limits_0^a \bigl(\f{\lambda}{g(x+s)} + B(s)\bigr) ds},$ which satisfies
$\f{\p}{\p a}\tilde C(a,x)=0,$
so that
$$C(a,x+a)e^{\int\limits_0^a \bigl(\f{\lambda}{g(x+s)}+B(s)\big)ds}=C(0,x),$$
which gives for $U_B$:
\begin{equation}\label{eq:UBLB}
g(y)U_B(a,y)=g(y-a)U_B(0,y-a)e^{-\int\limits_0^a \bigl(\f{\lambda}{g(y-a+s)}+B(s)\bigr)ds} = 4\LB \big(2(y-a)\big) e^{-\int\limits_0^a \bigl(\f{\lambda}{g(y-a+s)}+B(s)\bigr)ds},
\end{equation}
using \eqref{eq:Ubord} for the last equality. We  integrate the equation~\eqref{eq:UBLB} in $a$ and obtain
$$
g(y)U_{B,x}(y)=\int\limits_0^y 4 \LB \big(2(y-a)\big) e^{-\int\limits_0^a \bigl( \f{\lambda }{g(y-a+s)}+B(s)\bigr)ds} da.
$$

\

\noindent  {\it Second step. Formulating two deconvolution problems for $S_B$ and $f_B$.}

Denoting by $G$ an antiderivative of $1/g$, and defining an intermediate function $\NB,$ the previous formula is equivalent to
\begin{equation}
\NB(y):=g(y) e^{\lambda G(y)} U_{B,x}(y) = 4 \int\limits_0^y e^{\lambda G(y-a)} \LB \big(2(y-a)\big)  e^{-\int\limits_0^a B(s) ds} da.\label{eq:N_def}
\end{equation}
We define $\GB$ by \eqref{eq:G_def}, thus Equation \eqref{eq:N_def} is nothing but 
\begin{equation} \label{eq:convol1}
\NB (x) = \big[ \GB \star S_B \big] (x).
\end{equation}
This is a deconvolution problem, where  $S_B$ is the unknown. If we find estimators of $\NB$ and $\GB$, we can reconstruct $S_B$. Since $f_B=-\f{d}{da} S_B,$  integrating by parts we can transform~\eqref{eq:convol1} into a deconvolution problem for $f_B$.

\

%

\noindent {\it Third step. Solution of the deconvolution problems by Fourier transform.}

Extending all the functions on $\R_-$ by zero (with a slight abuse of notation, we keep the same notation for the function and for its extension), we  rewrite \eqref{eq:convol1} in the Fourier domain, and for $\xi \in \mathbb R$ such that $\GB^*(\xi) \neq 0$:
\begin{align*}
\NB^{*} &= \GB^*  S_B^* \quad \implies S_B^*(\xi) = \frac{\NB^{*}}{\GB^*}(\xi) 
. 
\end{align*}
Since $f_B$ is a probability density  for which we assume continuity around $0$ and $f_B(0=0,$ we can extend it continuously by $0$ on $\R_-$, we have $f_B^*(0)=1,$ and  and since $f_B=-\f{d S_B}{da}$ for $a>0$ we get
$$f_B^*=1+\bold{i}\xi S_B^*= 1 + \bold{i} \xi \frac{\NB^{*}(\xi)}{\GB^{*}(\xi)},$$
for $\xi \in \mathbb R$ such that $\GB^*(\xi) \neq 0$. The $1$ expresses the Fourier transform of the discontinuity of the prolongation of $S_B$ in $0,$ since $S_B(0^+)=1$; however, this term should be compensated by $\bold{i} \xi \frac{\NB^{*}(\xi)}{\GB^{*}(\xi)},$ since we have assumed $f_B(0)=0$.

\

\noindent {\it Fourth step. Inverse Fourier transforms.}  $f_B$ and $S_B$ are given by the formulae 
\begin{align*}
f_B(a)  = \frac{1}{2\pi} \int\limits_{-\infty}^{+\infty}  \Big( 1 + \bold{i} \xi \frac{\NB^{*}(\xi)}{\GB^{*}(\xi)} \Big)e^{-\bold{i}a\xi} d\xi \quad \textrm{and} \quad S_B(a)  = \int\limits_{a}^{+\infty} f_B(s) ds,
\end{align*}
{provided all these quantities exist,} and we have proved the formula~\eqref{eq:B}.
%
\end{proof}

\begin{remark} \label{rk:alternative} 
An alternative formula for $B(a)$ would be obtained using a direct formula for the survival function, $$S_B(a) = \frac{1}{2\pi} \int\limits_{-\infty}^{+\infty}   \frac{\NB^{*}}{\GB^*}(\xi)   e^{-\bold{i}a\xi} d\xi .$$
\end{remark}

\label{sec:rec}

\section{Statistical setting: estimation procedure}
\label{sec:stat}

Let us assume that we have a sample $X_1,\ldots, X_n$ independent and identically distributed according to $U_{B,x}$. This idealizes the case where, for instance, a picture of all individuals at a given time is taken, and their sizes experimentally measured.
We give here a procedure to estimate $B$ from such a sample.

%

%


\subsection{Estimation of $\GB$}\label{sec:estim:GB}

\noindent {\it Step 1.} Estimation of $U_{B,x}$ by a kernel estimator
$$
\widehat{U}_{n,x}(y) = \frac{1}{n} \sum_{j=1}^n K_{h_1}(y-X_j)
$$
with $K_{h_1}(\cdot) = h_1^{-1} K(h_1^{-1} \cdot)$.  Estimation of $D(y) = \big(gU_{B,x}\big)'(y)$  by
$$
\widehat{D}_n(y) = \frac{1}{n} \sum_{j=1}^n g(X_j) K'_{h_2}(y-X_j).
$$
For the choice of the regularization parameters $h_1=h_{1,n}$ and $h_2=h_{2,n}$, see Section~\ref{sec:num}.\\

\noindent {\it Step 2.} Inversion of \eqref{eq:sizediv2} replacing the left-hand side by $\lambda \widehat{U}_{n,x}(y) + \widehat{D}_n(y)$. We obtain $\widehat{\LL}_n(y)$ an estimator of $\LB(y)$. For this step, we follow~\cite{BDE}, and concatenate the inverse given in Lemma~\ref{lem:LB} in $L^2(dx)$ for $x\leq \bar x$, that we denote $\widehat{\LL}_{n,l}(y)$, with the inverse in $L^2(x^4dx)$, denoted $\widehat{\LL}_{n,r}(y)$, for $x\geq \bar x,$ for a given (to be determined numerically) $\bar x>0$: we set
$$\widehat{\LL}_n(y)=\widehat{\LL}_{n,l}(y)\1_{x\leq \bar x} + \widehat{\LL}_{n,r}(y)\1_{x> \bar x}.$$

\

\noindent {\it Step 3.} We deduce an estimator  of $\GB(y)$ using its definition \eqref{eq:G_def}:\\
$$
\widehat{\GG}_n(y) = 4 e^{\lambda G(y)} \widehat{\LL}_n(2y).
$$ 

\subsection{Estimation of the Fourier transforms $\GB^*$ and $\NB^*$}

\noindent {\bf Estimation of $\NB^{*}$.}
Recall Equation \eqref{eq:N_def} giving the definition of $\NB$. Then
$$
\NB^*(\xi) = \int\limits_0^\infty  g(x) e^{\lambda G(x)} e^{\bold{i}x\xi} U_{B,x}(x) dx,
$$
which can be estimated by
$$
\widehat{\NN_n^*}(\xi) = \frac{1}{n} \sum_{j=1}^n g(X_j) e^{\lambda G(X_j)} e^{\bold{i} X_j \xi}.
$$
When growth is exponential {\it i.e.} in the specific case $g(x) = \tau x$, we have $\lambda  = \tau$ and $G(x) = \ln(x)/\tau$. Thus the previous formula simplifies into
$$
\widehat{\NN_n^*}(\xi) = \frac{\tau}{n} \sum_{j=1}^n (X_j)^2 e^{\bold{i} X_j \xi}.
$$

\noindent {\bf Estimation of $\GB^{*}$.}  We compute the Fourier transform of $\widehat{\GG}_n(y)$ (see Section~\ref{sec:num} for the practical details). It gives us an estimator $\widehat \GG_n^*$ of $\GB^*(y)$.\\

\subsection{Estimation of the Fourier transform $f_B^*$} 
\label{sec:estim:fB}
We have an estimator of the Fourier transform of $f_B$ by
$$
\widehat{f_n^*}(\xi) = 1 + \bold{i} \xi \frac{\widehat{\NN_n^*}(\xi)}{\widehat{\GG_n^*}(\xi)} {\bf 1}_{\Omega_n}(\xi)
$$
with $\Omega_n = \{ \xi \in \mathbb R ; |\widehat{\GG_n^*}(\xi)| \geq \underline{\xi} \}$, with $ \underline{\xi}=\underline{\xi_n}$ a well-adapted threshold.\\

\subsection{Inverse Fourier transforms to estimate $S_B$ and $f_B$.} 
Estimators of $f_B$ and $S_B$ are
\begin{align*}
\widehat{f}_{n,h_3}(a) & = \frac{1}{2\pi} \int\limits_{-1/{h_3}}^{1/{h_3}} \widehat{f_n^*}(\xi) e^{-\bold{i}a\xi} d\xi
%
\quad \textrm{and} \quad \widehat{S}_{n,h_3}(a)  = \int\limits_{a}^{\infty} \widehat{f}_{n,h_3}(s) ds
\end{align*}
with $h_3=h_{3,n}$ a parameter of regularization to be well-chosen (see Section~\ref{sec:num}).\\

\subsection{Reconstruction of $B$.} Finally the division rate B can be estimated at a given point $a$ by
$$
\widehat{B}_{n,h}(a) 
= \frac{\widehat{f}_{n,h}(a)}{\widehat{S}_{n,h}(a) \vee \varpi_2} 
= \frac{\int\limits_{-1/h}^{1/h}  \Big( 1 + \bold{i} \xi \frac{\widehat{\NN_n^*}(\xi)}{\widehat{\GG_n^*}(\xi)} {\bf 1}_{\Omega_n}(\xi) \Big)   e^{-\bold{i}a\xi} d\xi}
{\int\limits_{s}^{+\infty} \bigg( \int\limits_{-1/h}^{1/h}  \Big( 1 + \bold{i} \xi \frac{\widehat{\NN_n^*}(\xi)}{\widehat{\GG_n^*}(\xi)} {\bf 1}_{\Omega_n}(\xi) \Big)   e^{-\bold{i}s\xi} d\xi \bigg) ds \vee \varpi}
$$
with $\varpi =\varpi_{n}$ a well-adapted threshold.

\begin{remark} 
Following Remark~\ref{rk:alternative}, an alternative estimator of $B$ would be obtained using  $$\widehat{S}_{n,h_4}(a) = \frac{1}{2\pi} \int\limits_{-1/{h_4}}^{1/{h_4}} \widehat{S_n^*}(\xi) e^{-\bold{i}a\xi} d\xi \quad \textrm{where} \quad \widehat{S_n^*}(\xi) = \frac{\widehat{\NN_n^*}}{\widehat{\GG_n^*}}(\xi) {\bf 1}_{\Omega_n}(\xi)$$
with $h_4=h_{4,n}$ a regularization parameter to be well-chosen. Note that such a procedure would not guarantee the decaying property of $S_B,$ or yet the fact that $f_B=-S_B'$, $S_B(0)=1$ and $S_B(\infty)=0$.
\end{remark}


\section{Numerical study}\label{sec:num}
\subsection{Numerical implementation}

For a given estimator $\hat g$ of a function $g$ (with real or complex values), we evaluate $\hat g$ on a regular grid with mesh $\Delta t$ and compute the empirical error as 
$$
e = \frac{\| \hat g - g \|_{2,\Delta t}}{\| g \|_{2,\Delta t}}
$$
where $\| \cdot\|_{2,\Delta t}$ is the $L_2$-discrete norm over the numerical sampling.

\subsubsection*{Choice of the regularization parameters} 
In the aboveseen formulae, we have distinguished five successive steps, and several regularization parameters which need careful implementation: $h_1$,  $h_2$ and $\bar x$ for the estimation of $\LB$; $h_3$ for the integration domain of the inverse Fourier transform; and the thresholds $\underline{\xi}$ and $\varpi$ to avoid explosion when the denominators vanish in the formulae. Each of the five steps have been tested separately, and we discuss below practical ways to determine such regularization parameters.

\begin{itemize}
\item 
The parameter $h_1$ is either automatically chosen by the kernel smoothing function 
{\tt ksdensity} of Matlab, or chosen via an adaptive method such as Goldenschluger and Lepski, or Penalized Comparison to Overfitting (PCO) most recently introduced~\cite{PCO1,PCO2}. Similarly for $h_2$, we can use an adaptive method or choose it \emph{a priori}, in relation with $h_1$ -  for instance, if we have the a priori that $U_{B,x}\in H^1$, the order of an optimal choice for $h_1$ will be $O(n^{-\f{1}{3}})$, for $h_2$ it is $O(n^{-\f{1}{5}})$, so that we can choose \emph{a priori} $h_2=h_1^{\f{3}{5}}.$ 

\item To compute $\bar x$, we refer to the previous studies~\cite{BDE,DHKR2}. Since for any estimation $\widehat U_{B,x} \in H^1((1+x^p)dx)\cap W^{1,\infty}$ with $p>3$ the unique solution $\widehat {\cal L}_B^0$ in $L^2(dx)$ to Equation~\eqref{eq:sizediv2} is also in $L^1\cap L^\infty$, which is not the case for the unique solution $\widehat {\cal L}_B^p$ in $L^2(x^pdx),$ we keep the solution in $L^2(x^pdx)$ only for the right-hand tail of the distribution,  which leads us to define $\bar x$ as 
$$\bar x:= \argmin\limits_{x\geq x_{max}^0} \vert \widehat{\cal L}_B^p -\widehat{\cal L}_B^0 \vert,\qquad x_{max}^0:= \argmax \limits_{x\geq 0} \widehat {\cal L}_B^0.$$
\item The key parameter $h_3$ is chosen in an oracle way, that is to say that we minimise in $h$ the criterion
$$
e(h) = \frac{\| \hat f_{n,h} - f_B \|_{2,\Delta a}}{\| f_B \|_{2,\Delta a}}.
$$
This oracle choice requires the knowledge of $f_B$, which is impossible in practice, but our aim here is to learn how well our procedure can do (in ideal conditions, when the tuning parameters can be chosen perfectly).
In practice we have set $\underline{\xi} = 0$, since the regularization by $h_3$ suffices numerically. We chose $\varpi_n = 1/n$.\\

\end{itemize}
\subsubsection*{Use of the regularity properties of the functions}

The protocol described in~Section~\ref{sec:stat}  does not guarantee important properties of the functions, such that the positivity of $f_B$ and $B$, and $\int f_B da=1.$  All these characteristics will be enforced in the numerical study to improve qualitatively the results. 
%

\subsection{Numerical results on simulated data}

In order to evaluate the quality of the reconstruction and the influence of each step of the protocol, we first studied separately each estimation necessary for the reconstruction of $B.$ For the simulations, growth is exponential with rate 1, $g(x)=x$, so that $\lambda_B = 1$. We choose for the division rate $B(a)=a^2$ for $a\geq 0$. We thus immediately deduce formulae for the density $f_B$ and the survival function $S_B$. We compute numerically the Fourier transform $f_B^*$. All Fourier and Inverse Fourier transforms were computed by an integral using the same scheme as in \cite{BDE} (see Technical aspects of Section 4.1 in~\cite{BDE}).

\subsubsection*{Numerical solution for $U_{B,x}$}

To compute numerically the first eigenvector $U$ solution of \eqref{eq:U}--\eqref{eq:intUphi}, we follow the classical first order finite volume scheme proposed for instance in~\cite{brikci2008analysis,doumic2007analysis} for similar equations, renormalize the solution at each time step, and stop the iterations when the distribution has converged, the error between two successive steps being smaller than the precision desired. An important point is to allow the scheme to be slightly dissipative - which is the case by  choosing a regular grid - contrarily to the one proposed for instance in~\cite{bernard2016cyclic}, which would give rise to an oscillatory behaviour.

The solution $U_{B,x}$ is computed in our framework ($g(x)=x$ and $B(a)=a^2$) with regular grids: size ranges from 0 to 6 and increment ranges form 0 to 3 with meshes $\Delta x = \Delta a = 6/500$. We require a precision of $0.01$\%. Besides the stationary distribution in size we compute the size-distribution of dividing cells $\mathcal L_B$. By formula \eqref{eq:G_def}, we obtain numerically $\mathcal G_B$ and $\mathcal N_B$. (And we compute numerically the Fourier transforms $\mathcal G_B^*$ and $\mathcal N_B^*$.)

\subsubsection*{Protocol 1 -- Reconstruction of $B$ when both $U_{B,x}$ and $\LB$ are given with highest accuracy.}

For the reconstruction of $B$ we use directly the numerical and high-resolution solutions of $U_{B,x}$ and $\LB$ of the previous step. The noise is thus limited to the numerical error, itself very limited thanks to the high resolution of the grid and to the requirement of a very small error in the long-time asymptotics.

See Figure \ref{fig:etape1} for the different steps of the protocol. See Figure \ref{fig:resultats1et2} (red curves) and Tables~\ref{tab:resultats1et2} and~\ref{tab:resultats1et2_B} for results. 
\subsubsection*{Protocol 2 -- Reconstruction of $B$ when $U_{B,x}$ is given with highest accuracy but $\LB$ is unknown.}

See Figure \ref{fig:etape2} for the different steps of the protocol. See Figure~\ref{fig:resultats1et2} (yellow curves) and Tables~\ref{tab:resultats1et2} and~\ref{tab:resultats1et2_B} for results.\\

Both of the Protocols 1 and 2 give a satisfactory reconstruction of the division rate $B$ on the range $[0;2]$, with an error around 8\% (Table~\ref{tab:resultats1et2_B}). The estimation deteriorates for an increment of size higher than 2 since the probability for a cell to exceed this increment is less than 10\%. Computing the error on the wider range $[0;2.5]$, we surprisingly observe that Protocol 2 (with an error around 13\%) gives a more robust estimation than Protocol 1, which includes fewer statistical unknowns (error around 20\%).

Coming back to the first steps (Table~\ref{tab:resultats1et2}) we observe that Protocol 2 achieves errors below 5\%, whereas Protocol 1 leads to 10\% error for the reconstruction of the density $f_B$. Is this due to error compensation when computing the ratio in the reconstruction formula of Section \ref{sec:estim:fB}? Figure~\ref{fig:resultats1et2}d shows that the reconstruction of the Fourier transform of the density is good in modulus for frequency $|\xi|<5$ for Protocol 2, whereas the reconstruction by Protocol 1 deteriorates from smaller frequencies (around $\xi=\pm 3$).
Both protocols underestimate the maximum of the density $f_B$, but this is amplified using Protocol 1.
As a consequence of the bias in the estimation of the density $f_B$, we observe a bias in the estimation of the division rate $B$. It is slightly overestimated for increments of size lower than 1 and underestimated beyond. 

\subsubsection*{Protocol 3 -- Reconstruction of $B$ when $U_{B,x}$ is reconstructed from $X_1,\ldots,X_n$ i.i.d. $\sim$ $U_{B,x}$ and $\LB$ is given with highest accuracy.}

See Figure \ref{fig:etape3} for the different steps of the protocol. See Figures~\ref{fig:resultats3_IC}, \ref{fig:resultats34_IC_n} and~\ref{fig:resultats34_vitesse} for results. 

\subsubsection*{Protocol 4 -- Reconstruction of $B$ from  $X_1,\ldots,X_n$ i.i.d. $\sim$ $U_{B,x}$.}

See Figure \ref{fig:etape4} for the different steps of the protocol. See Figures~\ref{fig:resultats4_IC}, \ref{fig:resultats34_IC_n} and~\ref{fig:resultats34_vitesse} for results.\\

Protocols 3 and 4 have been repeated $M=100$ times for each tested $n$ ranging from $500$ to $50~000$. This enables us to obtain empirical confidence intervals (CI) for the estimation of the division rate $B$ and for the different intermediate reconstructions. As expected the computed 95\%-CI shrink as $n$ grows (Figure~\ref{fig:resultats34_IC_n}). The reconstruction of $B$ is satisfactory on the range $[0;1.75]$ when $n=500$, and slightly beyond 2 when $n=50~000$. We observe the same bias as the one already mentioned for Protocols~1 and~2. It seems even amplified looking at the mean of the 100 reconstructions, to such an extent that the true division rate $B(a)=a^2$ is on the fringe of the 95\%-CI when $n=50~000$.

One can plot the mean error over the $M=100$ reconstructions versus the sample size $n$ in log-log scale (Figure~\ref{fig:resultats34_vitesse}). Doing so linear curves are obtained and the extracted-slopes give us the speeds in the decrease of the error (with respect to $n$) for our different reconstructions. The speeds are surprisingly slightly better for Protocol~4 than for Protocol~3, which is in line with the comparison of Protocols~1 and~2. 

As regards Protocol~4, the speed for the estimation of $U_{B,x}$ is close to $n^{-0.4}$, which is expected (indeed $(n_0+1)/(2(n_0+1)+1) = 0.4$ with $n_0=1$ the order of a Gaussian kernel). For the estimation of $U_{B,x}^{'}$ we expect $(n_0+1)/(2(n_0+1)+2) \approx 0.33$ and we obtain worse (slope of $-0.25$). The inversion step in order to obtain $\mathcal L_B$ (and $\mathcal G_B$) does not deteriorate this speed hugely (slope of $-0.23$). After the computation of the Fourier transform, the speed for the estimation of $\mathcal G_B$ is of the same order (slope of $-0.25$). For the estimation of $\mathcal N_B$ we obtain the expected slope $-0.5$ which corresponds to a parametric speed in $n^{-1/2}$. It is not possible to predict {\it a priori} the speed of a quotient estimator, and we obtain a slope of $-0.24$ for the estimation of $f_B^*$. We obtain a final speed of $n^{-0.16}$ for the estimation of $B$ (for Protocol~4). This is much more difficult to interpret these last speeds since the regularity of $\mathcal N_B$ and $\mathcal G_B$ comes in. We refer to the Discussion below and to Johannes \cite{johannes09} for general results on the quality of density estimators in a deconvolution problem when the "noise" law is unknown, generalizing results such as Fan \cite{fan91} when the "noise" law is known. See also the study of Belomestny and Goldenshluger \cite{BG17} in the case of a multiplicative measurement error leading to the use of Mellin transform techniques (instead of an additive error leading to the use of Fourier transform ones).

Last but not least we observe a saturation of the error for very large $n$ ($n \geq 20~000$ in Protocol~3 and $n\geq 30~000$ in Protocol~4). Thus the slopes were computed taking into account this effect when necessary (removing the last points).


\subsection{Numerical results on experimental data}

We now turn to experimental data of bacterial growth to test the method. 
In the corresponding experiments cells are followed through time and the joint distribution of instantaneous size and size increment is estimated, as well as the joint distribution of size and size increment of  dividing cells. This allows us to compare our results obtained through our indirect method with a direct estimation of the division rate $B(a)$ from kernel density estimation of $f_B(a)$ and $S_B(a).$

The dataset we analysed comes from a single-cell experimental study on \emph{E. coli}
growth, performed by Stewart {\it et al.}~\cite{Eric}, and we used the data analysis performed in~\cite{DHKR2} (see Methods - data analysis). Following the results of~\cite{DHKR2}, we can assume here that  all cells grow with approximately the same growth rate $g(x)=\tau x$ with $\tau=0.0275 min^{-1}.$  Corollary~\ref{coroll1} then states that we have ${\cal G}_B (y) = 4y {\cal L}_B(2y)$ and ${\cal N}_B(y)=\tau y^2 U_{B,x} (y)$.

The experimental sample contains $n=31,333$ measurements of cell sizes. We perform a kernel density estimation with $h_1=0.125$ on Figure~\ref{fig:Nexper} to display it (Step~1 of the estimation procedure, Section~\ref{sec:estim:GB}). 

The second step consists in the estimation $\widehat {\cal L}_n$ of the size distribution of dividing cells  ${\cal L}_B$, through the numerical solution of Equation~\eqref{eq:sizediv2} (Step~2 of the procedure, Section~\ref{sec:estim:GB}). In our case of a rich dataset, we also have access to a sample of $n_d=1,679$ dividing cells, so that we can compare our estimation with the kernel density estimation of this sample (done here with $h_d=0.167$): we denote this estimate $\widehat {\cal L}_{n_d}^{h_d}$, and both are displayed in Figure~\ref{fig:BNexp}. We see that this approximation is relatively satisfactory, though far from being perfect. The distance between the two distributions has  two main reasons. First, the sample of dividing cells may be noisier than the sample of all cells: since the measurement is done only on time intervals of $2min$, there is an error due to the size growth of the cell during these $2min$. Second,  the cells may not all grow at exactly the same growth rate $\tau$ (see Figure~S4 in~\cite{DHKR2}) so that Equation~\eqref{eq:sizediv2} is an approximation, see for instance~\cite{DHKR1} for a more complete model including growth rate variability. However, we also note that this way of computing the size distribution of cells remains valid even if the division rate would depend on other variables, so that Equation~\eqref{eq:sizediv2} allows one to estimate the size distribution of dividing cells from the measurement of a size sample, provided the approximation of homogeneous growth rate is valid, and if this growth rate is measured independently.

For the following steps,
we do not have a direct way to compare the Fourier transforms of the intermediate functions ${\cal G}_B$ and ${\cal N}_B$  to directly measured quantities, but we can estimate $f_B$ (and so $S_B=\int\limits_a^\infty f_B(s)ds$ and $B=\f{f_B}{S_B},$ as classically done for renewal processes) from the increment-and-size experimental distribution  of dividing cells. Let us recall that $f_B$ is \emph{not} equal to the increment distribution of dividing cells, due to the well known bias selection effect~\cite{hoffmann:hal-01254203,DHKR2} of keeping the two daughter cells at each division step. However, for the specific case of a linear growth rate, we have an easy relation between the increment-and-size distribution of dividing cells and $f_B$. We notice that the function $U_1(a,x):=\f{xU(a,x)}{\iint x U(a,x) da dx}\geq 0,$ is solution of the system 
\begin{eqnarray}\label{eq:U1}
  \f{\p}{\p a} (g(x)U_1(a,x))+\f{\p}{\p x} (g(x)U(a,x)) + g(x)B(a)U_1(a,x)=0,  \\ \nonumber \\
g(x)U_1(0,x)=2 \int\limits_0^\infty g(2x) B(a)U_1(a,2x)da,\quad g(0)U_1(a,0)=0, \\ \nonumber \\
 U\geq 0, \qquad \iint U(a,x)dadx=1,\label{eq:intU1}
\end{eqnarray}
which defines the increment-and-size distribution of all cells in the conservative case, \emph{i.e.} when only one child is kept at each division. We thus have 
$$f_B(a)= \f{\int \tau x B(a)  U_1 (a,x) dx}{\iint \tau x B(a) U_1 (a,x) dadx}=\f{ \int x (x B(a)  U (a,x)) dx}{\iint x^2 B(a)  U (a,x) dadx},$$
and we notice  that this formula is nothing but a weighted average of  $\f{xB(a)U(a,x)}{\iint x B(a) U(a,x)dadx}$, which is the increment-and-size distribution of dividing cells  taken in experimental conditions, that is, when the two children are kept at each division (for a more detailed explanation on the links between the two points of view, we refer to~\cite{DHKR1}, Section 4.2). All these considerations
 provide us with the following estimate of $f_B$ from the increment-and-size sample of dividing cells: let us denote $(A_j,X_j)_{1\leq j\leq n_d}$ the 2-dimensional sample of increment $A$ and size $X$ at division, that we assume to behave as if $(A_j,X_j)$ were independent identically distributed according to the probability law  $\f{xB(a)U(a,x)}{\iint x B(a) U(a,x)dadx}$: we have

$$\widehat f_{B,n_d}(a):=\f{1}{n_d}\sum\limits_{j=1}^{n_d} K_{h_d} (a-A_j) X_j.$$
In Figure~\ref{fig:fBexper} we compare $\widehat{f}_{B,n}$  to a kernel density estimation of the increment distribution of dividing cells; and finally in Figure~\ref{fig:Bexper} we compare our estimator $\widehat{B}_{n}$ to $\widehat B_{n_d}=\f{\widehat f_{B,n_d}}{\int\limits_a^\infty \widehat f_{B,n_d} (s) ds \vee \varpi_2}$. If the results may be viewed as qualitatively in agreement, they are not fully satisfactory, and especially not comparable with the results obtained on simulated data. The reasons may be twofold. First, modeling errors: 
the incremental model is in good agreement with the data but cannot completely describe the full complexity of the biological process, including all potential fluctuations

 Second, our problem being severely-ill posed, even if the results on simulated data are of good quality, we did not take into account 
 the experimental noise, which is due to errors and/or imprecision of image analysis, leading to noise in size measurement and division timing, and to the sampling in time (which affects only the measurements on dividing cells), with a time step that is only ten times less than the cell generation time.
 This means that relatively important differences in the increment distribution of dividing cells / in the increment-structured division rate, may lead to differences on the size distribution of all cells which, for this level of noise, are not significant.

\section{Discussion} \label{sec:disc}

In this article, we have proposed an explicit reconstruction formula to estimate the increment-structured division rate of a population dividing by fission into two equal parts.  The formula may be easily generalized to other types of division kernels, as done for the size-structured equation in~\cite{BDE}. Based on this formula, we designed and implemented a numerical protocol, which we used to investigate numerically which rates of convergence could be expected. We finally tested the method on experimental data; though our results reveal qualitatively satisfactory, they did not reach the precision obtained for simulated data. This highlighted inherent difficulties of the problem, which deserve to be further investigated. These difficulties are linked to two sources of noise, not yet adressed neither theoretically nor numerically: modelling error and single-cell measurement errors. Investigating the influence of modelling error, \emph{e.g.} heterogeneity of cells with respect to growth rate or fragmentation kernel, or yet dependence of the division rate on another trait different from the increment,  could give first insights in this direction. To take into account single-cell measurement errors, we need to add a deconvolution problem to our noise model, assuming for instance that we measure  realizations of an i.i.d. sample $Y_i=X_i + \sigma \xi_i$ where $X_i$ are i.i.d. random variables distributed along $U_{B,x}$ and $\xi_i$ are i.i.d. normally distributed random variable, $\sigma$ being the level of noise.

Let us now discuss some possible variants of the method, and directions to prove estimation inequalities.

\subsubsection*{Possible variants of the method}

 Instead of estimating $\LB$ from $U_{B,x}$ by Lemma~\ref{lem:LB}, using $L^2 (dx)$ for $x<\bar x$ and $L^2(x^4 dx)$ for $x>\bar x,$ and only then take its Fourier transform, we could use the following lemma~\ref{lem:GB}: first define an estimate of the function $\Gamma_B$ from the sample, and then solve Equation~\eqref{eq:xi2xi}. It seems attractive since the Fourier transform then admits a simple and explicit definition from the sample $(X_1,\cdots X_n)$; but in practice, it appeared difficult to handle and would deserve a full study - in particular to determine, as for $\bar x$, a convenient threshold $\bar \xi$ to use one or the other of the spaces $L^2(x^p dx)$.

\begin{lemma}\label{lem:GB}
Let $\GB$ defined by~\eqref{eq:G_def}, and $\GB^*$ its Fourier transform. Then $\GB^*$ is solution of the following equation
\begin{equation} \label{eq:xi2xi}
\GB^*(2\xi)  =  \GB^*(\xi)  + \Gamma_B(\xi)
\end{equation}

with  $\Gamma_B$ defined by
\begin{equation}\label{eq:Gamma_def}
\Gamma_B(\xi) =  \bold{i}\tau\xi \int\limits_0^{\infty} x^2 e^{\bold{i} x \xi}  U_{B,x}(x) dx.
\end{equation}
For $\Gamma_B\in L^2 (\xi^pd\xi)$ with $p\geq 0,$ this functional equation admits a unique solution $\GB^*\in L^2(\xi^p d\xi).$
\end{lemma}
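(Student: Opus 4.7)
The plan is to derive the functional equation by multiplying Equation~\eqref{eq:sizediv2} by $x$ (which equals $e^{\lambda G(x)}$ in the exponential growth case $g(x)=\tau x$, $\lambda=\tau$) and then taking the Fourier transform, and to obtain uniqueness via a dyadic self-similarity argument.

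First, starting from~\eqref{eq:sizediv2} and multiplying by $x$, I would use the algebraic identity
$$\tau x\, U_{B,x}(x) + x\bigl(\tau x\, U_{B,x}\bigr)'(x) = \tau \bigl(x^2 U_{B,x}\bigr)'(x),$$
so that Equation~\eqref{eq:sizediv2} rewrites as $\tau(x^2 U_{B,x})'(x) = 4x\LB(2x) - x\LB(x)$. By Corollary~\ref{coroll1}, $\GB(y)=4y\LB(2y)$, and therefore the right-hand side equals $\GB(x) - \tfrac{1}{2}\GB(x/2)$.

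Second, I would take the Fourier transform of both sides, after extending the functions by zero on $\R_-$ as in the proof of Proposition~\ref{prop:rec}. Integration by parts on the left yields $-\mathbf{i}\tau\xi\int_0^\infty x^2 U_{B,x}(x) e^{\mathbf{i}x\xi}dx = -\Gamma_B(\xi)$, the boundary term at zero vanishing because $x^2 U_{B,x}(x)\to 0$ as $x\to 0^+$. On the right, the dilation $\tfrac{1}{2}\GB(x/2)$ transforms into $\GB^*(2\xi)$ via the change of variable $u=x/2$. Equating both sides yields exactly~\eqref{eq:xi2xi}.

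Third, for uniqueness in $L^2(|\xi|^p d\xi)$, I would consider two solutions $F_1,F_2$ in this space and their difference $D:=F_1-F_2$, which satisfies the homogeneous scaling relation $D(2\xi)=D(\xi)$ almost everywhere. The change of variable $\eta=2\xi$ then gives
$$\|D\|^2_{L^2(|\xi|^p d\xi)} = \int_\R |D(2\xi)|^2 |\xi|^p\, d\xi = \frac{1}{2^{p+1}}\,\|D\|^2_{L^2(|\xi|^p d\xi)},$$
and since $2^{-(p+1)}<1$ for $p\geq 0$, this forces $\|D\|=0$, hence $D=0$ almost everywhere. The main obstacle I anticipate is not the algebra but the rigorous justification of the integration by parts, namely the vanishing of boundary terms at both $0$ and infinity and the decay of $x^2 U_{B,x}(x)$, which must be extracted from the standing assumption that $\Gamma_B$ defines a genuine $L^2$ function; once these regularity issues are settled, both the derivation of the functional equation and the dyadic rescaling uniqueness argument are quite clean.
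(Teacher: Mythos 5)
Your derivation of the functional equation is correct and is, in substance, the same computation as the paper's: the paper splits $\GB^*(\xi)$ into two terms $T_1+T_2$ using \eqref{eq:sizediv2} and integrates $T_1$ by parts against $e^{\lambda G(x)+\mathbf{i}x\xi}$, which is exactly your product-rule identity $\tau\bigl(x^2U_{B,x}\bigr)'=\tau x\,U_{B,x}+x\bigl(\tau x\,U_{B,x}\bigr)'$ in disguise, and it handles the dilation term by the same change of variables. Both versions rely on the vanishing of the boundary term $\tau x^2U_{B,x}(x)e^{\mathbf{i}x\xi}$ at $0$ and $\infty$, which the paper leaves implicit and you at least flag. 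Where you genuinely depart from the paper is the well-posedness of \eqref{eq:xi2xi}: the paper changes unknown to $u(\xi)=\GB^*(\xi)/\xi^2$, which satisfies $4u(2\xi)-u(\xi)=\Gamma_B(\xi)/\xi^2$, and then invokes Proposition A.1 of \cite{DPZ} (the same result underlying Lemma~\ref{lem:LB}), whereas you give a self-contained dyadic-scaling argument for the homogeneous equation $D(2\xi)=D(\xi)$. Your identity $\|D\|^2_{L^2(|\xi|^pd\xi)}=2^{-(p+1)}\|D\|^2_{L^2(|\xi|^pd\xi)}$ is correct and forces $D=0$ for every $p\neq-1$, so it covers the stated range $p\geq0$; it is more elementary than the citation and makes transparent why $p=-1$ is the critical exponent.

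The one point you do not address is existence. The lemma asserts that the equation \emph{admits} a (unique) solution in $L^2(\xi^pd\xi)$ whenever $\Gamma_B$ lies in that space --- a statement that matters because in the estimation procedure $\Gamma_B$ is replaced by an empirical estimate, for which one must still be able to solve \eqref{eq:xi2xi} --- and your argument only shows that there is at most one solution. This is easily repaired: the explicit series $\GB^*(\xi)=-\sum_{k\geq0}\Gamma_B(2^k\xi)$, which the paper writes down immediately after the lemma, converges in $L^2(|\xi|^pd\xi)$ because $\|\Gamma_B(2^k\cdot)\|_{L^2(|\xi|^pd\xi)}=2^{-k(p+1)/2}\|\Gamma_B\|_{L^2(|\xi|^pd\xi)}$ is geometrically summable for $p>-1$, and one checks directly that the sum satisfies \eqref{eq:xi2xi}. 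You should add this half of the argument to match the full claim.
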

\begin{proof} At the view of \eqref{eq:sizediv2} and \eqref{eq:G_def} one immediately gets
$$
\GB^*(\xi) = \int\limits_0^\infty  \GB(x) e^{\bold{i}x\xi} dx = \int\limits_0^\infty 4 e^{\lambda G(x) } \LB(2x) e^{\bold{i}x\xi} dx = T_1 + T_2
$$
with 
$$
T_1 = \int\limits_0^\infty  e^{\lambda G(x) }  \big( \lambda U_{B,x}(x) + (gU_{B,x})'(x) \big) e^{\bold{i}x\xi} dx 
\quad \quad \textrm{and} \quad \quad
T_2 = \int\limits_0^\infty  e^{\lambda G(x) }  \LB(x) e^{\bold{i}x\xi} dx.
$$
Let us compute the first term:
\begin{align*}
T_1 & =  \int\limits_0^{\infty}  \lambda e^{\lambda G(x) + \bold{i} x \xi} U_{B,x}(x) dx + {\Big[ (gU_{B,x})(x) e^{\lambda G(x) + \bold{i} x \xi} \Big]_{x=0}^{\infty}} - \int\limits_0^\infty  (gU_{B,x})(x) \big( \tfrac{\lambda}{g(x)}+\bold{i}\xi \big) e^{\lambda G(x) + \bold{i} x \xi} dx \\
& = - \bold{i}\xi \int\limits_0^{\infty} e^{\lambda G(x) + \bold{i} x \xi}  g(x)U_{B,x}(x) dx.
\end{align*}
%
In order to treat the second term, {we assume the growth is exponential {\it i.e.} $g(x) = \tau x$}. In this special case we have $G(x)=\ln(x)/\tau$ and thus $G(2x)=G(2)+G(x)$.
Then
\begin{align*}
T_2  = 2 \int\limits_0^\infty  e^{\lambda G(2x) }  \LB(2x) e^{\bold{i}2x\xi} dx =  \tfrac{1}{2} e^{\lambda G(2)} \int\limits_0^\infty  4 e^{\lambda G(x) }  \LB(2x) e^{\bold{i}x (2\xi)} dx = \GB^*(2\xi)
\end{align*}
using at last $e^{\lambda G(2)}=2$ since $\lambda = \tau$. Gathering the two terms we obtain \eqref{eq:xi2xi}.
The existence and uniqueness in $L^2(x^pdx)$ for $p\neq -1$ directly follows from~\cite{DPZ} Proposition A.1.applied to the equation transformed for $u(\xi)=\f{1}{\xi^2} \GB^*(\xi),$ which satisfies 
$$4u(2\xi)-u(\xi)=\f{\Gamma_B(\xi)}{\xi^2},$$
which admits a unique solution for $\f{\Gamma_B}{x^2}\in L^2 (\xi^q d\xi)$ for $q\neq 3,$ which is equivalent to $\Gamma_B \in L^2(\xi^p d\xi)$ with $p\neq -1.$ Since we have $\GB^*(0)=\int\limits_0^\infty 4y\LB(2y)dy>0$ (it represents the average size of dividing cells), we are only interested in $p\geq 0.$
\end{proof}

The  function  $\Gamma_B$ can be easily estimated by $\widehat \Gamma_n(\xi)  = \frac{\bold{i}\tau\xi }{n} \sum_{j=1}^n X_j^2 e^{\bold{i} X_j \xi}$ truncated for $\vert \xi\vert \leq \overline{\xi}$. Then, in the same spirit as for the solution of~\eqref{eq:sizediv2}, we could solve Equation~\eqref{eq:xi2xi} by writing
$$\GB^*(\xi)=-\sum\limits_{k=0}^\infty \Gamma_B (2^k \xi),\qquad \implies \qquad \widehat \GG_n = -\sum\limits_{k=0}^N \widehat \Gamma_n (2^k\xi),\qquad \forall \xi\in \R,$$
but numerically it happens not to give better  estimation results and to be less easily compared with the original function in the space state, so that we preferred the  method explained above.

\

Other variants and improvements of the method would be to constraint the space of solutions for $\widehat f_{n,h}$ to the space of probability measures. This is done manually in our procedure, taking the positive and real part of the estimated inverse Fourier transform (see Section~\ref{sec:estim:fB}), but constrained optimization or projection on a finite-dimension space approximating probability measures could improve the results. This will be investigated in future work.

\subsection*{Estimation inequalities}

To prove estimation inequalities, several difficulties appear, which give a roadmap for further investigation of the problem.

First, we have to prove that the denominator of our ratios in our inverse Fourier transforms, namely $\GB^*,$ does not vanish. This has been done for related problems (estimation of the fragmentation kernel) in two recent papers, by using complex analysis methods (Lemma 1.iii in~\cite{hoang:hal-01623403}, Theorem 2.i in~\cite{doumic:hal-01501811}). In~\cite{doumic:hal-01501811}, it was the central and most technical point of the study. Here however, the proof carried out in~\cite{hoang:hal-01623403} based on the argument principal could probably be adapted.

Second, the deconvolution problem~\eqref{eq:convol1} appears as a deconvolution problem with unknown "noise", since $\GB$ plays the role of the noise. The difficulty is thus to investigate whether $\GB$ is  {\it ordinary smooth} or 
{\it super smooth}, with the following definitions:
\begin{itemize}
\item Ordinary smooth of order $\beta$: $c_1|t|^{-\beta} \leq |\GB^*(t)| \leq c_2 |t|^{-\beta}$ for any $|t|\geq M$, for positive constants.
\item Super smooth of order $\beta$: $c_1|t|^{\gamma_1}e^{-c_0 |t|^\beta} \leq |\GB^*(t)| \leq c_2 |t|^{\gamma_2}e^{-c_0 |t|^\beta} $ for any $|t|\geq M$, for positive constants.
\end{itemize}
The smoother the "noise", the more ill-posed the problem. Once  a given order of magnitude for the decay of $\GB^*$ assumed, speeds of convergence and orders of magnitude for the choice of the regularization constant $h_3$ may be classically obtained, see for instance~\cite{Baumeister} (ch.4, Section 4.2.2). However, assuming a given decay for $\GB^*$ means that we assume a certain degree of regularity - and no more - for $\GB$, \emph{i.e.} for $\LB,$ \emph{i.e.} for the unknown $B$ itself.  If regularity results exist and can be extended to higher regularity by the chain rule for our equation, see e.g.~\cite{D,gabriel:hal-01742140}, the reverse is false - results such as: if $B$ is not derivable, then $U_{B,x}$ cannot be twice derivable. This shows the importance of  designing \emph{a posteriori} and adaptive methods.

\

{\bf Acknowledgments.} 
A.O. was on leave ("d\'el\'egation") at the french National Research Centre for Science (CNRS) during the 
finalization of this work. M.D. has been partly supported by the ERC Starting Grant SKIPPER$^{AD}$ (number 306321). We thank Albert Cohen, Marc Hoffmann and Beno\^it Perthame  for very fruitful discussions.

\section{Appendix: Figures and tables}

Figures~\ref{fig:etape1}, \ref{fig:etape2}, \ref{fig:etape3} and \ref{fig:etape4} use the notation $\mathcal F$ for the Fourier transform and $\mathcal F^{-1}_h$ for the following operator. For a suitable function $f$,
$$
\mathcal F^{-1}_h(f)(a) = \frac{1}{2\pi} \int\limits_{-1/h}^{1/h} f(\xi) e^{- i a\xi} d\xi.
$$



\begin{figure}[h]
\centering
\includegraphics[width=0.5\textwidth]{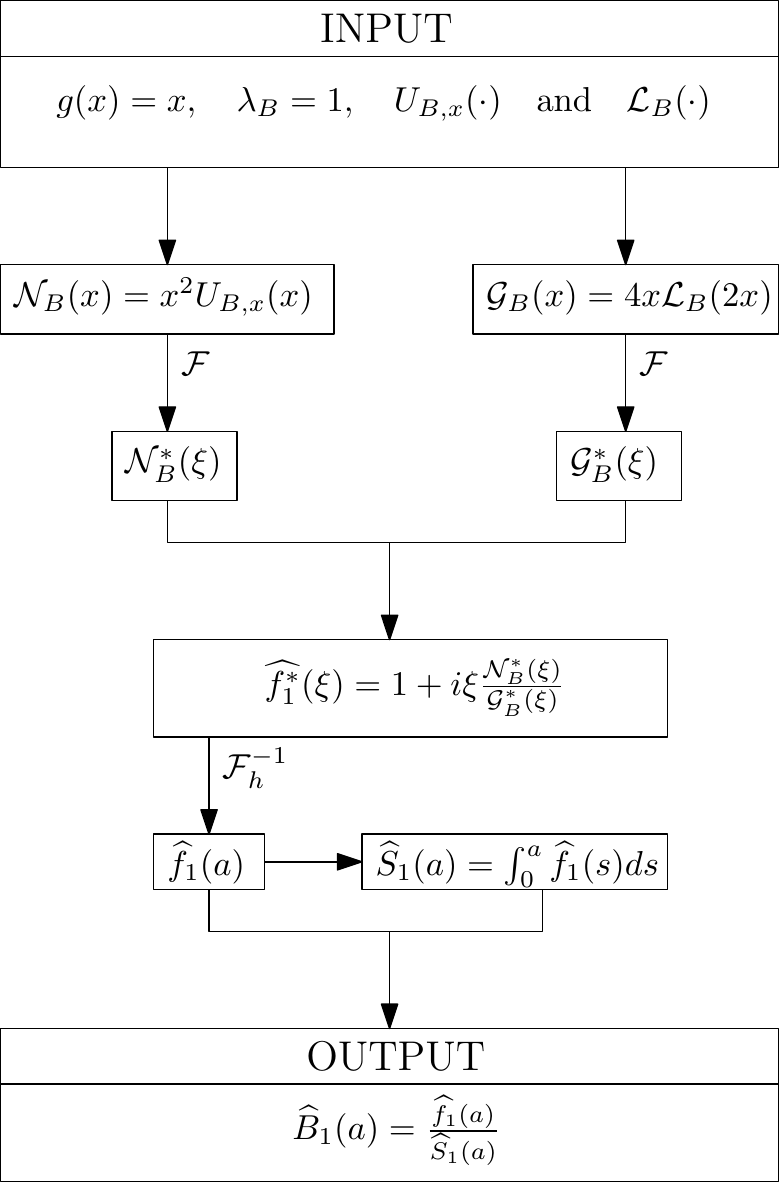} 
\caption{Protocol 1 -- Reconstruction of $B$ when both $U_{B,x}$ and $\LB$ are (almost) exactly known. The oracle choice for $h$ gives us the value 1/4.75.
}%
\label{fig:etape1}%
\end{figure}

\begin{figure}[h]
\centering
\includegraphics[width=0.70\textwidth]{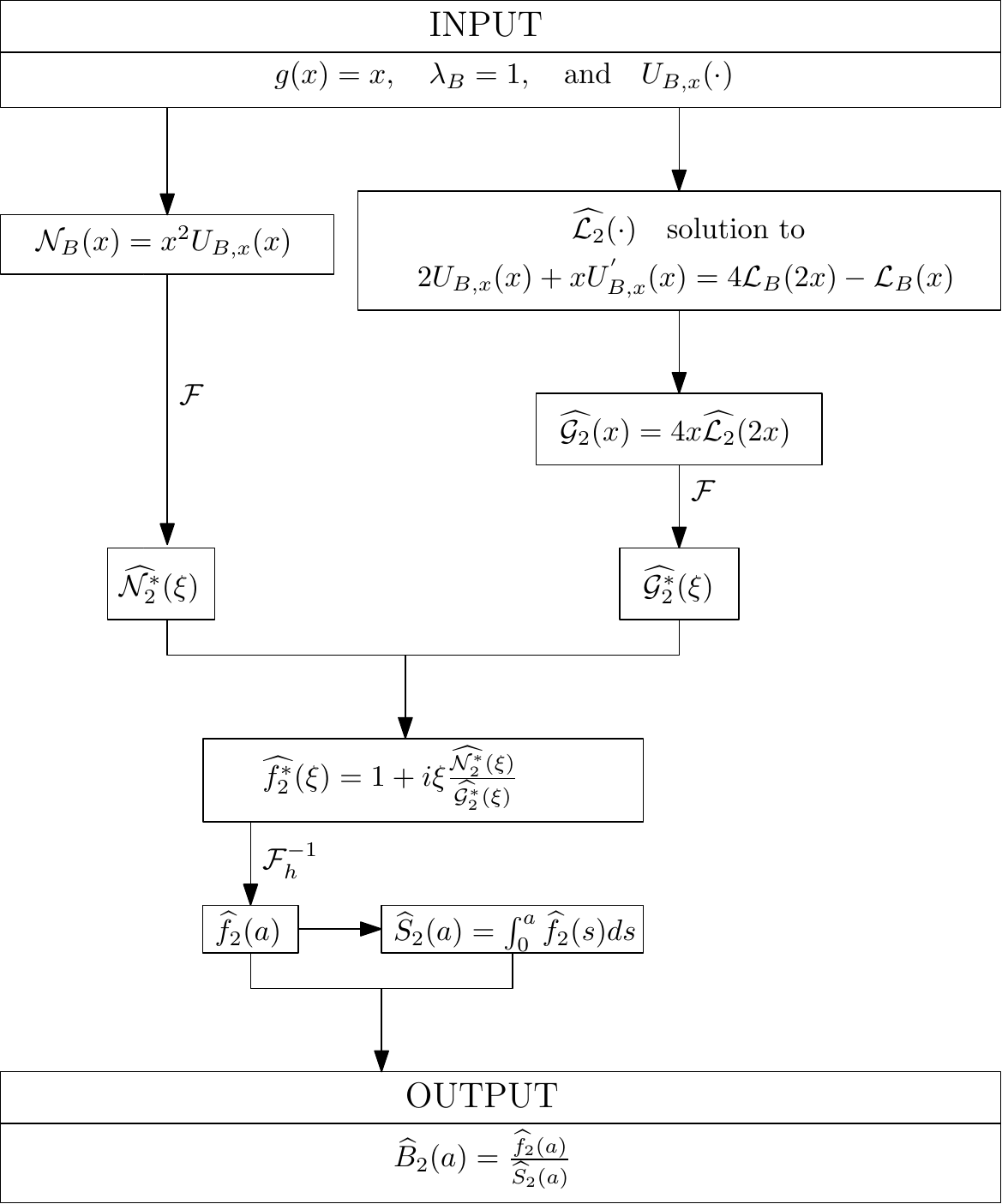} 
\caption{Protocol 2 -- Reconstruction of $B$ when $U_{B,x}$ is (almost) exactly known but not $\LB$.  The oracle choice for $h$ gives us the value 1/5.}%
\label{fig:etape2}%
\end{figure}

\begin{figure}[h]
    \begin{subfigure}[b]{0.5\textwidth} 
        \centering \includegraphics[width=\textwidth]{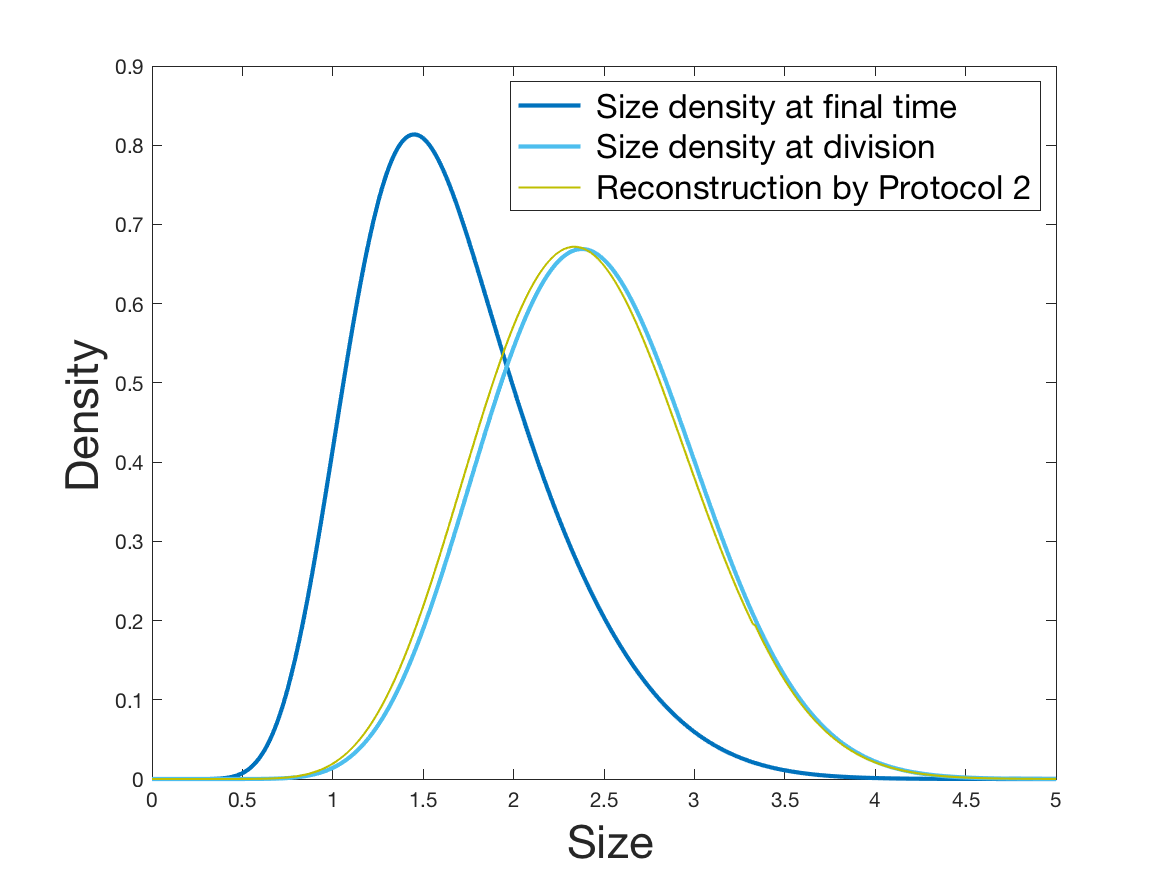} 
        \caption{$U_{B,x}$, $\mathcal L_B$ and $\widehat{\mathcal L}_2$ in function of $x$}
    \end{subfigure}
    \begin{subfigure}[b]{0.5\textwidth} 
        \centering \includegraphics[width=\textwidth]{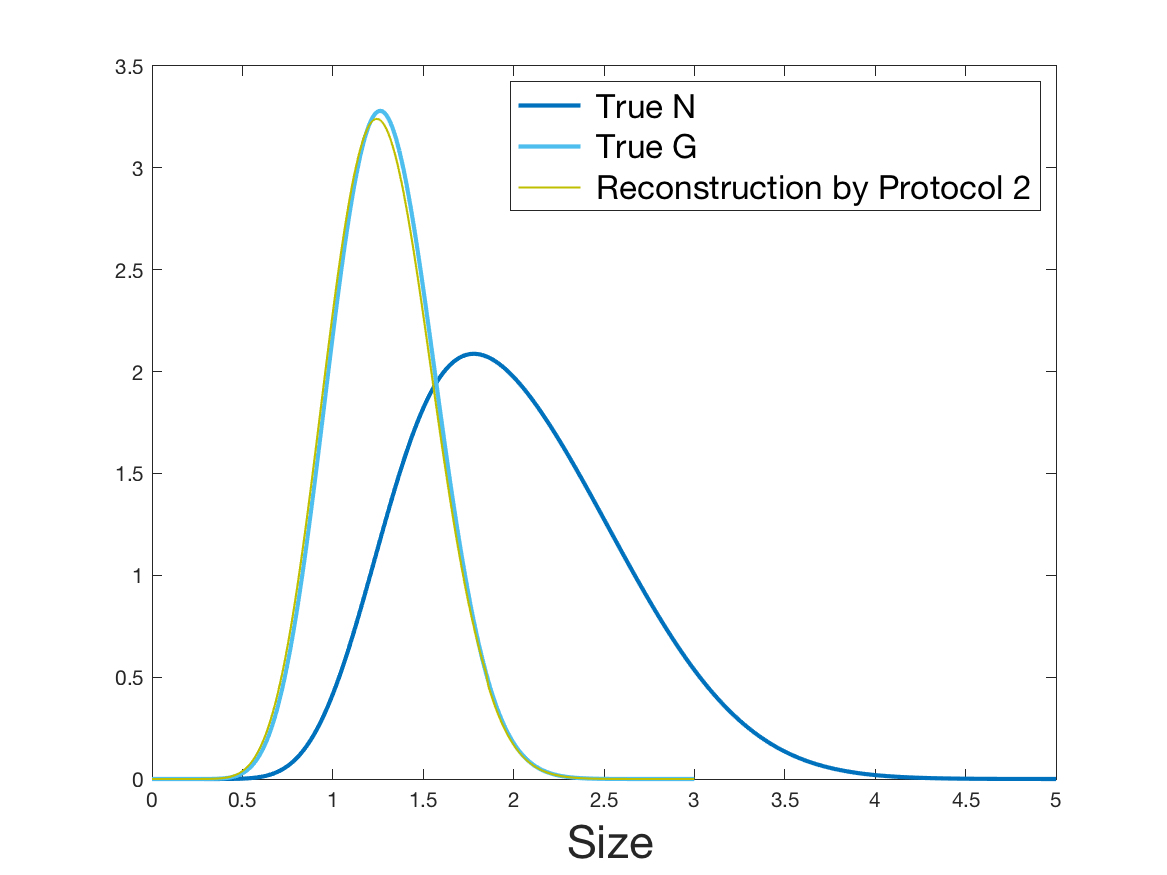} 
        \caption{$\mathcal N_B$, $\mathcal G_B$ and $\widehat{\mathcal G}_2$ in function of $x$}
    \end{subfigure}
    
    \begin{subfigure}[b]{0.5\textwidth} 
        \centering \includegraphics[width=\textwidth]{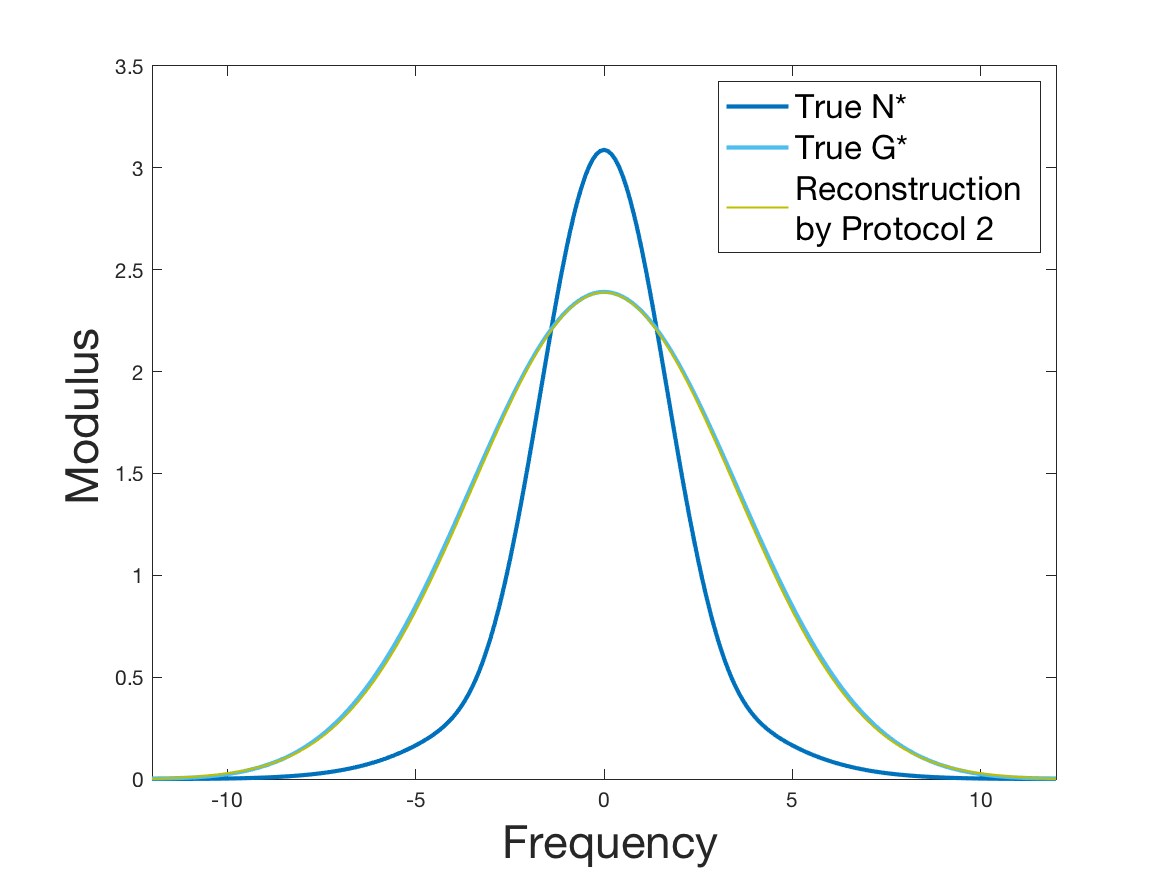} 
        \caption{$|\mathcal N_B^*|$, $|\mathcal G_B^*|$ and $|\widehat{\mathcal G}_2^*|$ in function of $\xi$}
    \end{subfigure}
    \begin{subfigure}[b]{0.5\textwidth}  
        \centering \includegraphics[width=\textwidth]{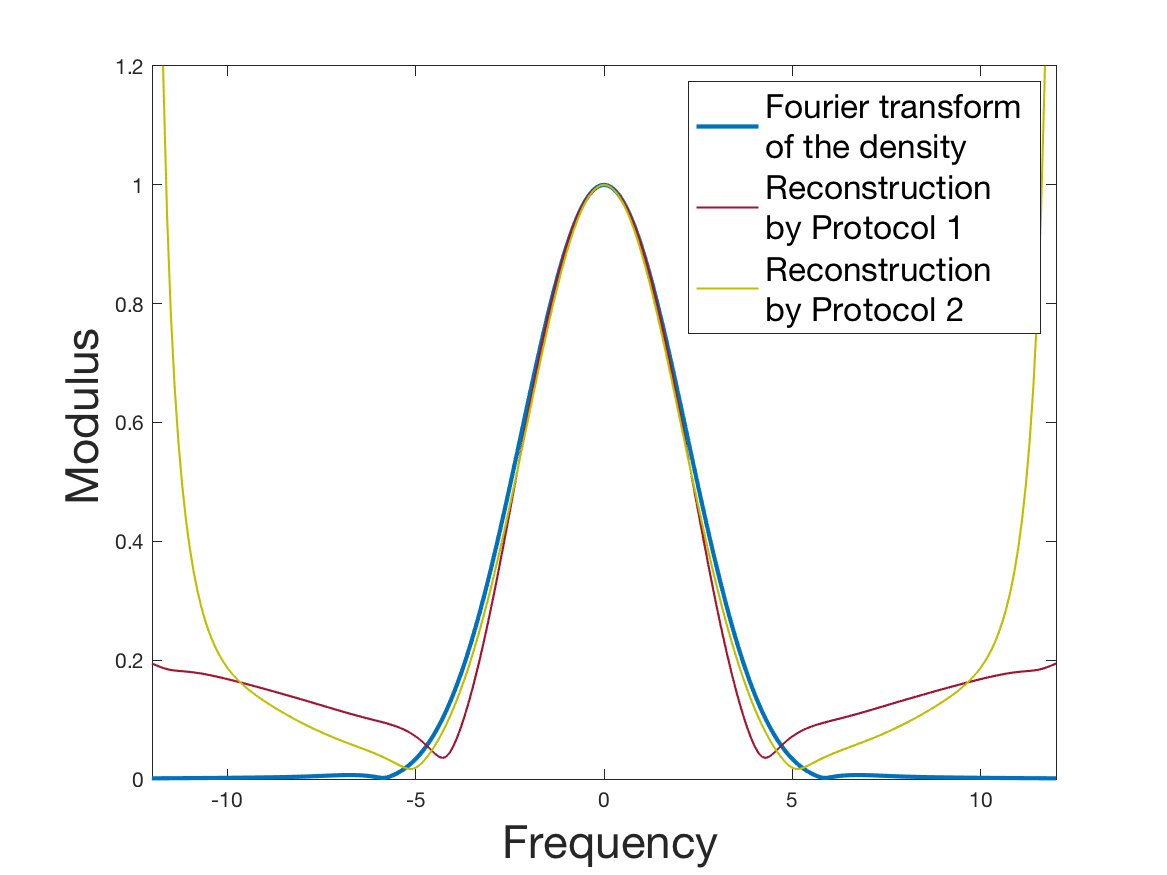} 
        \caption{$|f_B^*|$, $|\widehat{f}_1^*|$ and $|\widehat{f}_2^*|$ in function of $\xi$}
    \end{subfigure}
    
    \begin{subfigure}[b]{0.5\textwidth} 
        \centering \includegraphics[width=\textwidth]{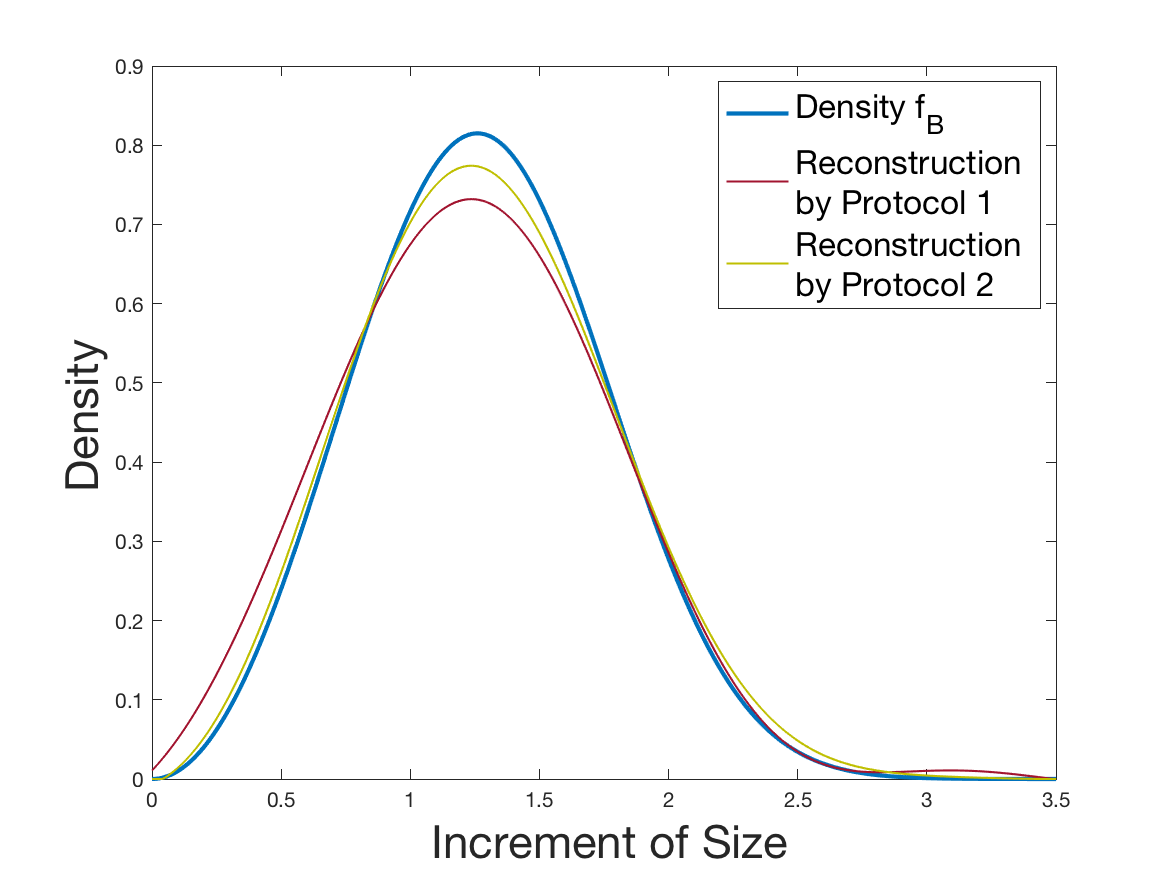} 
        \caption{$f_B$, $\widehat{f}_1$ and $\widehat{f}_2$ in function of $a$}
    \end{subfigure}
    \begin{subfigure}[b]{0.5\textwidth} 
        \centering \includegraphics[width=\textwidth]{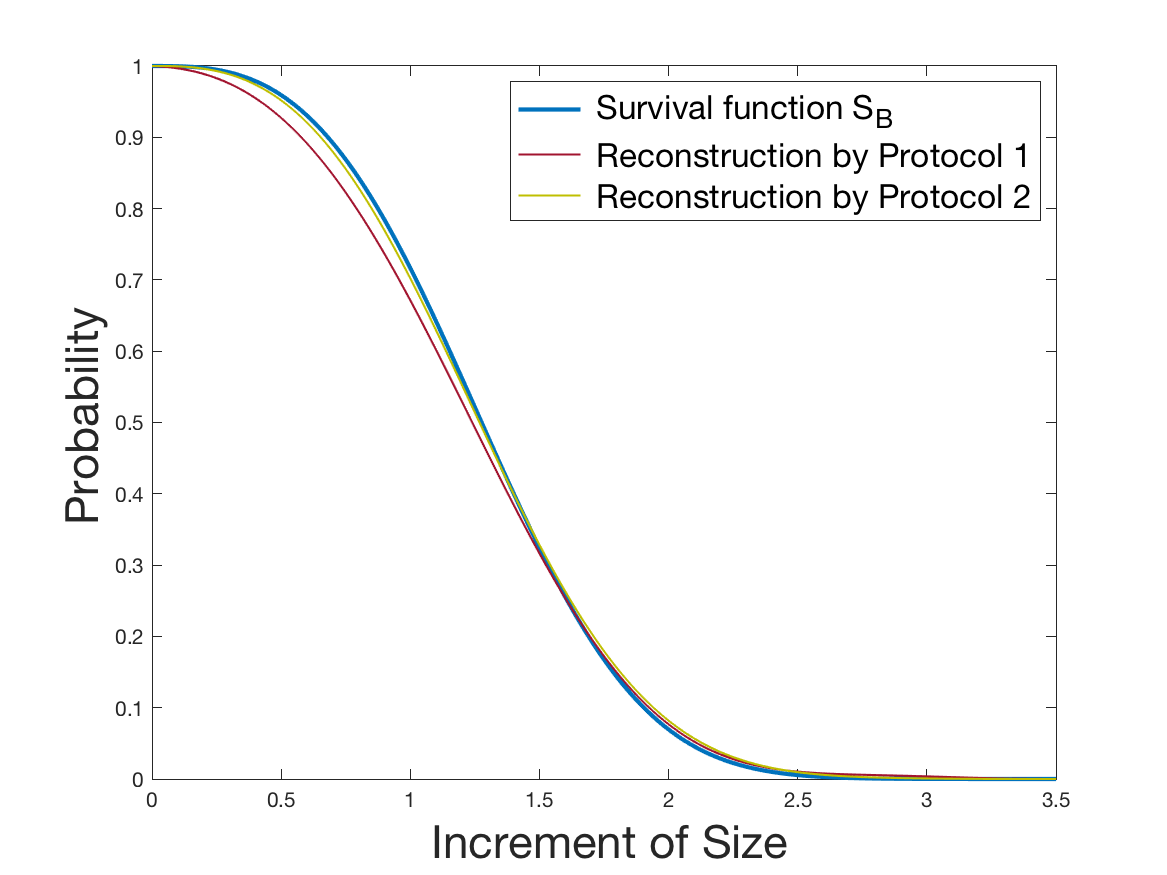}
        \caption{$S_B$, $\widehat{S}_1$ and $\widehat{S}_2$ in function of $a$}
    \end{subfigure}

\end{figure}

\begin{figure}[t] \ContinuedFloat 
    \begin{subfigure}[b]{0.5\textwidth} 
        \includegraphics[width=\textwidth]{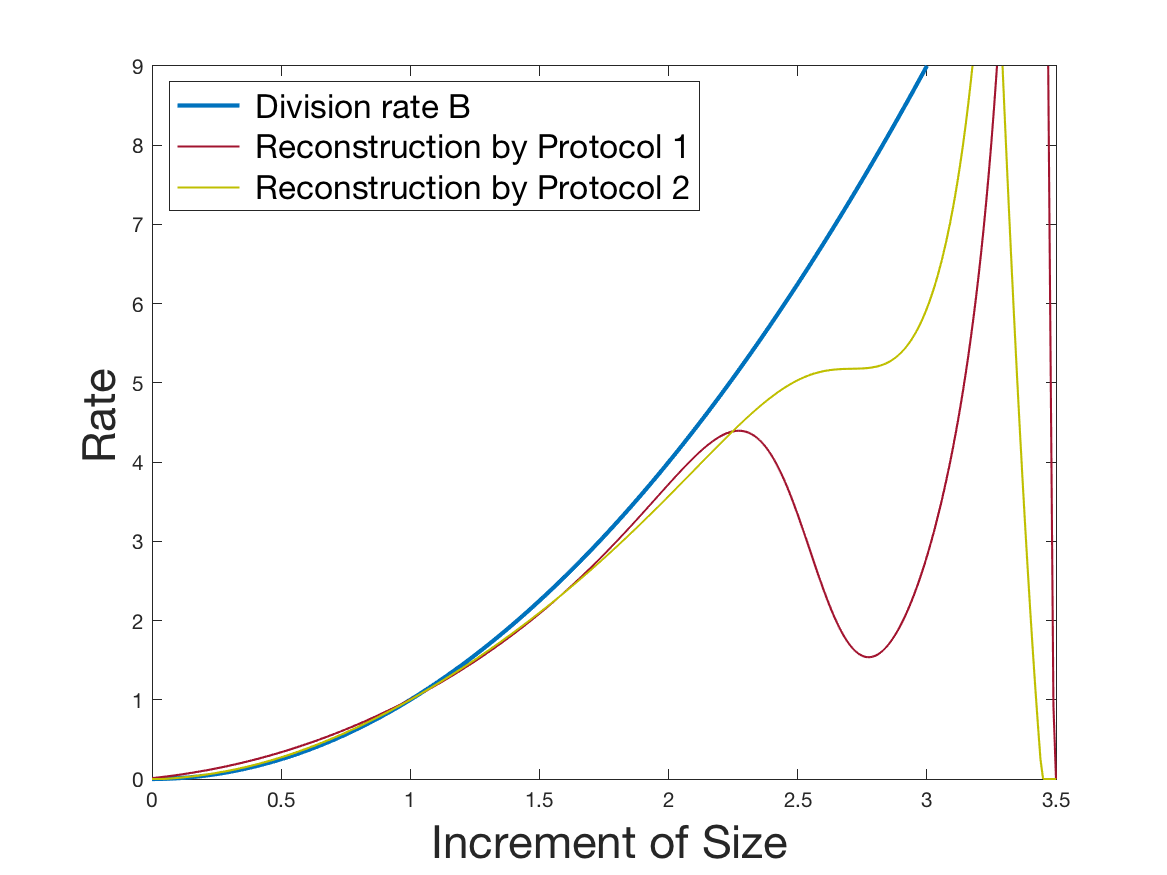} 
        \caption{$B$, $\widehat{B}_1$ and $\widehat{B}_2$ in function of $a$}
    \end{subfigure}
\caption{Results of Protocols 1 and 2. ($x$ stands for size, $\xi$ for frequency and $a$ for increment of size)}%
\label{fig:resultats1et2}%
\end{figure}


\begin{table}[h!]
\centering
\begin{tabular}{|c|c|c|c|c|c|c|}
\hline
\textit{Reconstruction of} & $\mathcal L_B$ & $\mathcal G_B$ & $\mathcal G^*_B$ & $f^*_B$ & $f_B$ & $S_B$  \\ \hline
Numerical & [0;6] & [0;6] & [-50;50] & $[\tfrac{-1}{4.75};\tfrac{1}{4.75}]$ & [0;5] & [0;5]  \\ 
sampling & $\Delta x = \tfrac{6}{500}$ & $\Delta x = \tfrac{6}{500}$ & $\Delta \xi = 0.05$ & $\Delta \xi = 0.05$ & $\Delta a = 0.01$ & $\Delta a = 0.01$   \\ \hline
\textbf{Protocol 1}        & -              & -              & -                &   0.1062      &    0.1043   &   0.0395     \\ \hline
\textbf{Protocol 2}        &       0.0478         &      0.0417          &      0.0417            &   0.0470      &   0.0482    &  0.0149       \\ \hline
\end{tabular}
\caption{Errors of Protocols 1 and 2 for the intermediate steps.}
\label{tab:resultats1et2}
\end{table}

\begin{table}[h!]
\centering
\begin{tabular}{|c|c|c|}
\hline
\textit{Reconstruction of} & $B$ & $B$ \\ \hline
Numerical & [0;2]& [0;2.5 ]\\ 
sampling & $\Delta a = 0.01$ & $\Delta a = 0.01$ \\ \hline
\textbf{Protocol 1}        &  0.0730 & 0.2065 \\ \hline
\textbf{Protocol 2}        &  0.0849 &  0.1321   \\ \hline
\end{tabular}
\caption{Errors of Protocols 1 and 2 for $B$ in function of the numerical sampling.}
\label{tab:resultats1et2_B}
\end{table}

\begin{figure}[h]
\centering
\includegraphics[width=0.80\textwidth]{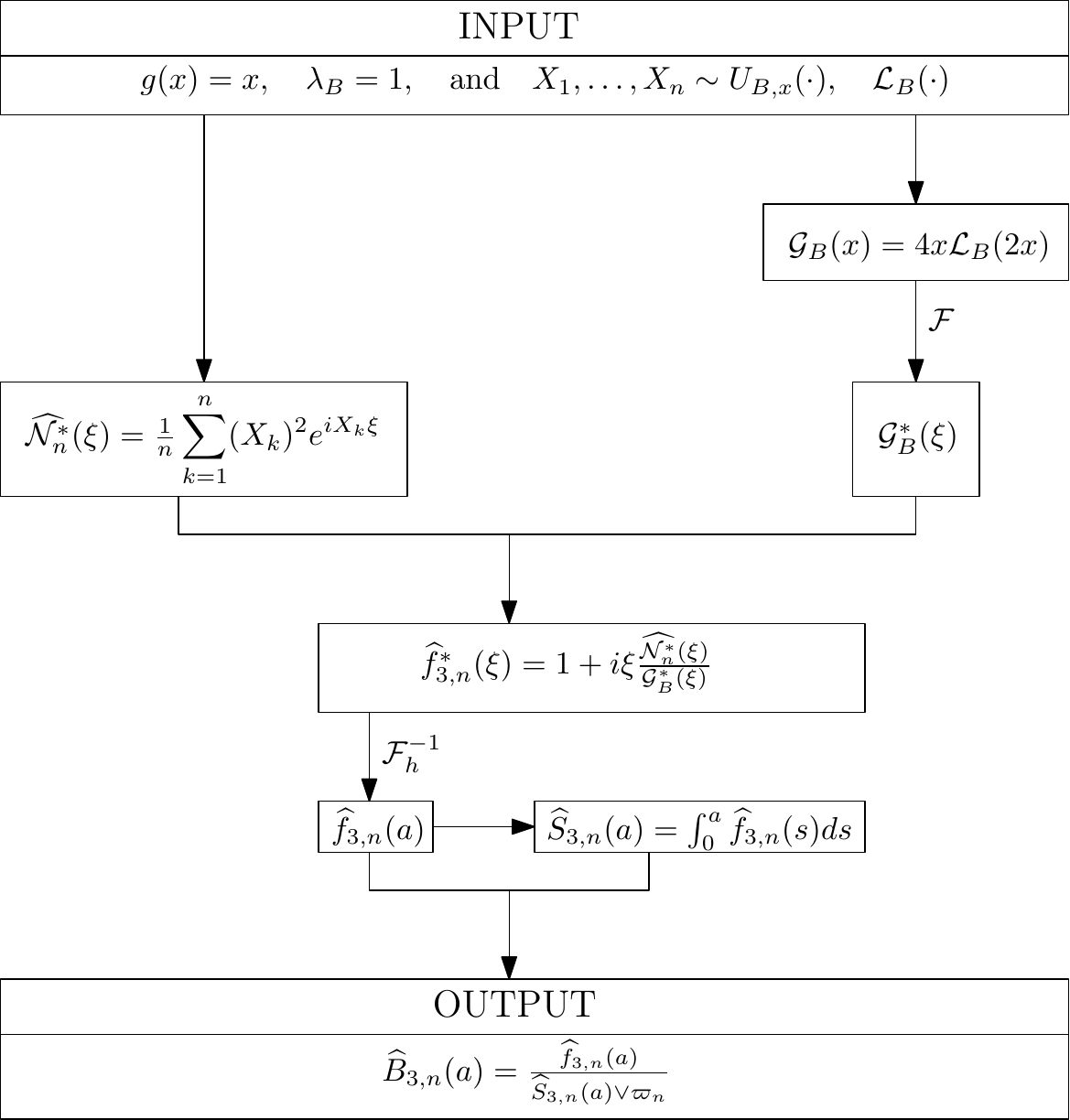} 
\caption{Protocol 3 -- Reconstruction of $B$ when $U_{B,x}$ is reconstructed from $X_1,\ldots,X_n$ i.i.d. $\sim$ $U_{B,x}$ but $\LB$ is (almost) exactly known.  The oracle choice for $h_3$ gives us values that range between $1/3.25$ for $n=500$ and $1/4.75$ for $n = 50~000$. We set $\varpi_n = 1/n$.}%
\label{fig:etape3}%
\end{figure}

\begin{figure}[h]
    \begin{subfigure}[b]{0.5\textwidth} 
        \centering \includegraphics[width=\textwidth]{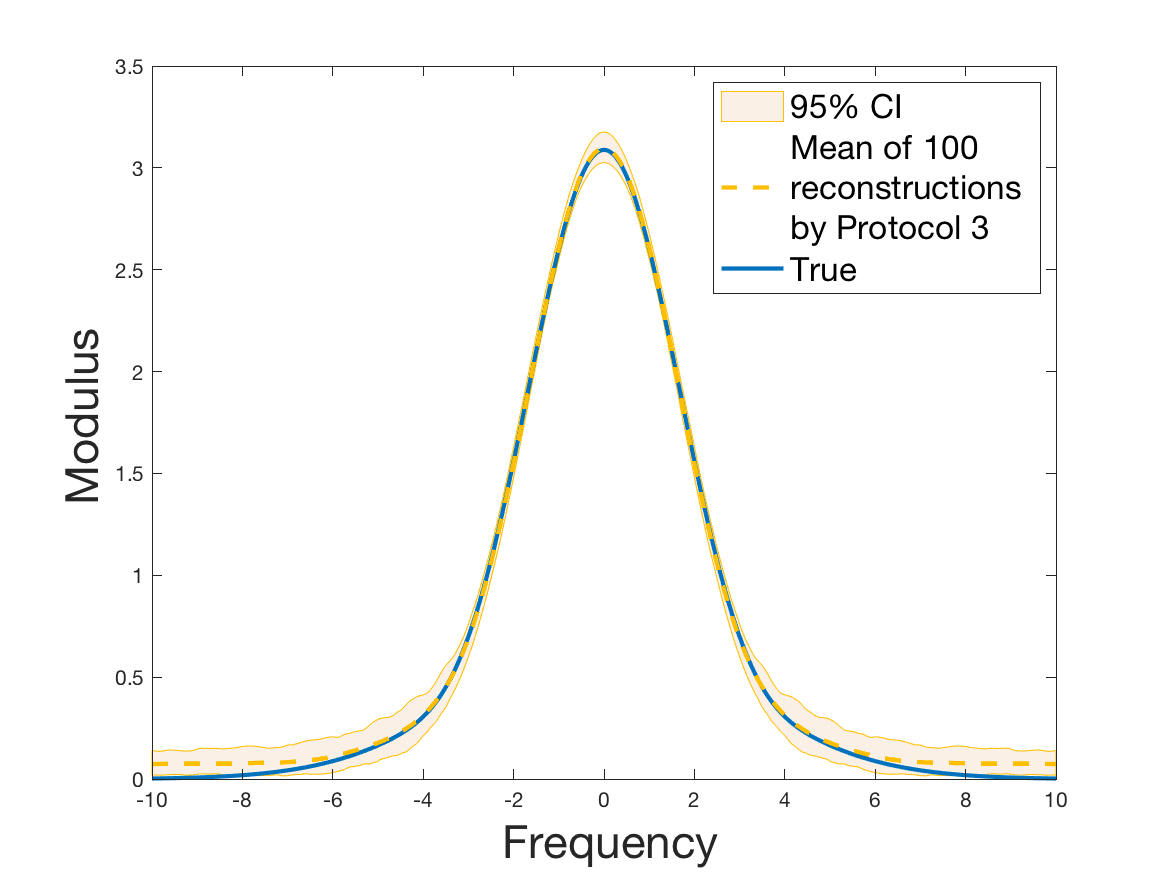}  
        \caption{$|\mathcal N^*_B|$, $|\widehat{\mathcal N^*_{n}}|$ in function of $\xi$}
    \end{subfigure}
    \begin{subfigure}[b]{0.5\textwidth} 
        \centering \includegraphics[width=\textwidth]{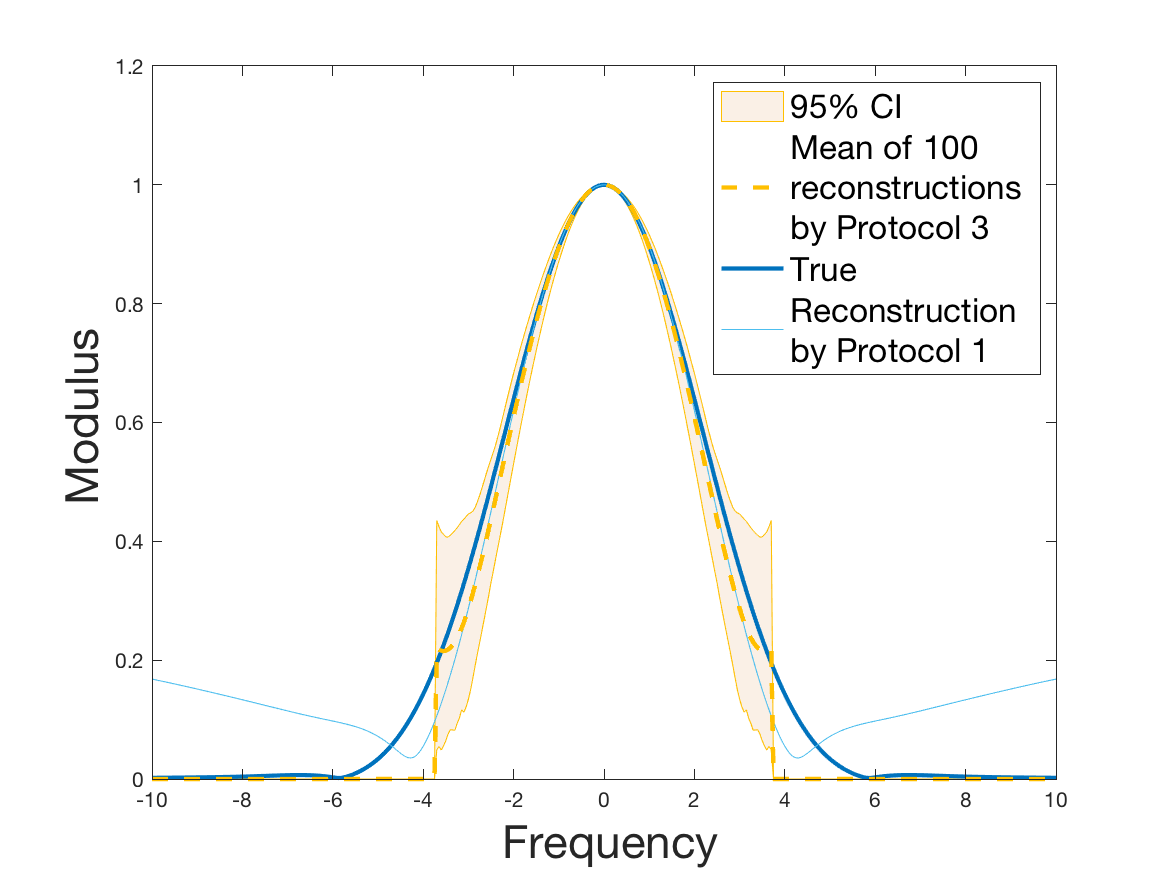}
        \caption{$|f_B^*|$, $|\widehat{f}^*_1|$ and $|\widehat{f}^*_{3,n}|$ in function of $\xi$}
    \end{subfigure}
    
    \begin{subfigure}[b]{0.5\textwidth} 
        \centering \includegraphics[width=\textwidth]{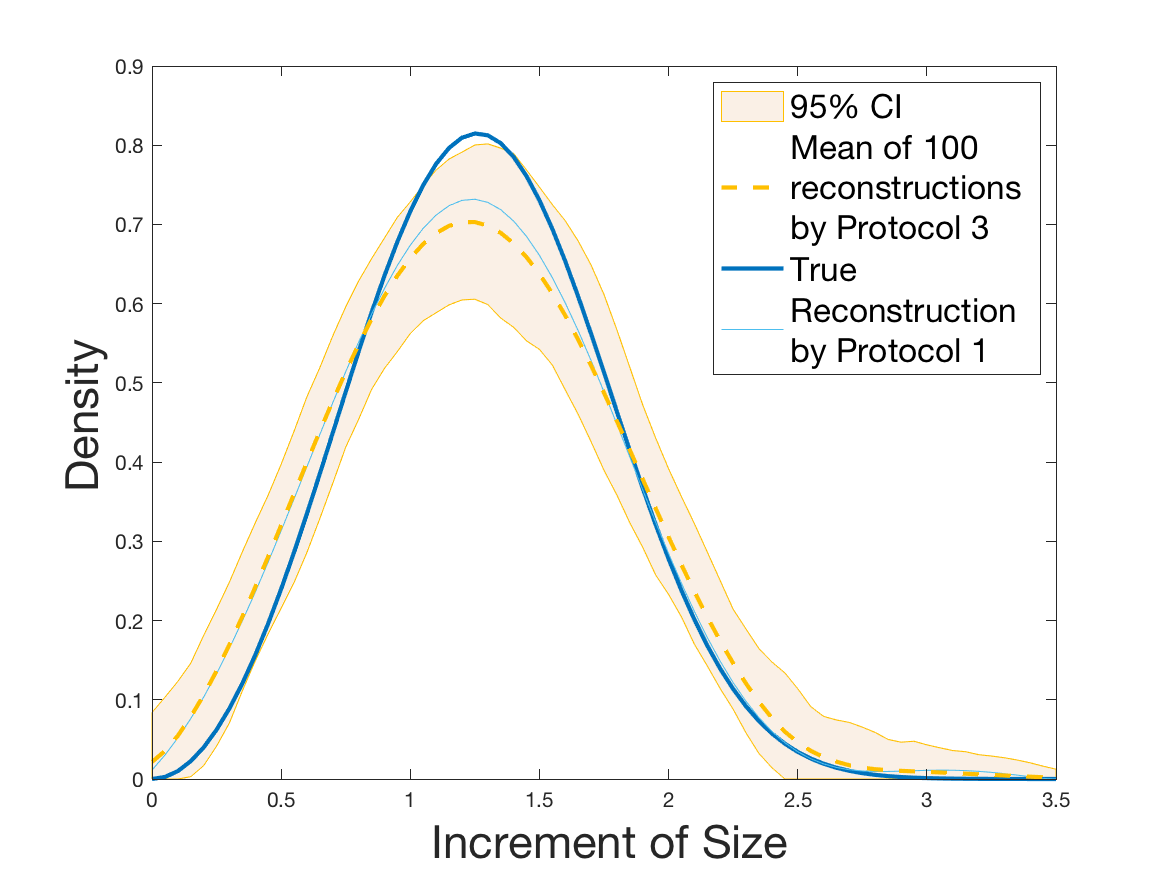}  
        \caption{$f_B$, $\widehat{f}_1$ and $\widehat{f}_{3,n}$ in function of $a$}
    \end{subfigure}
    \begin{subfigure}[b]{0.5\textwidth} 
        \centering \includegraphics[width=\textwidth]{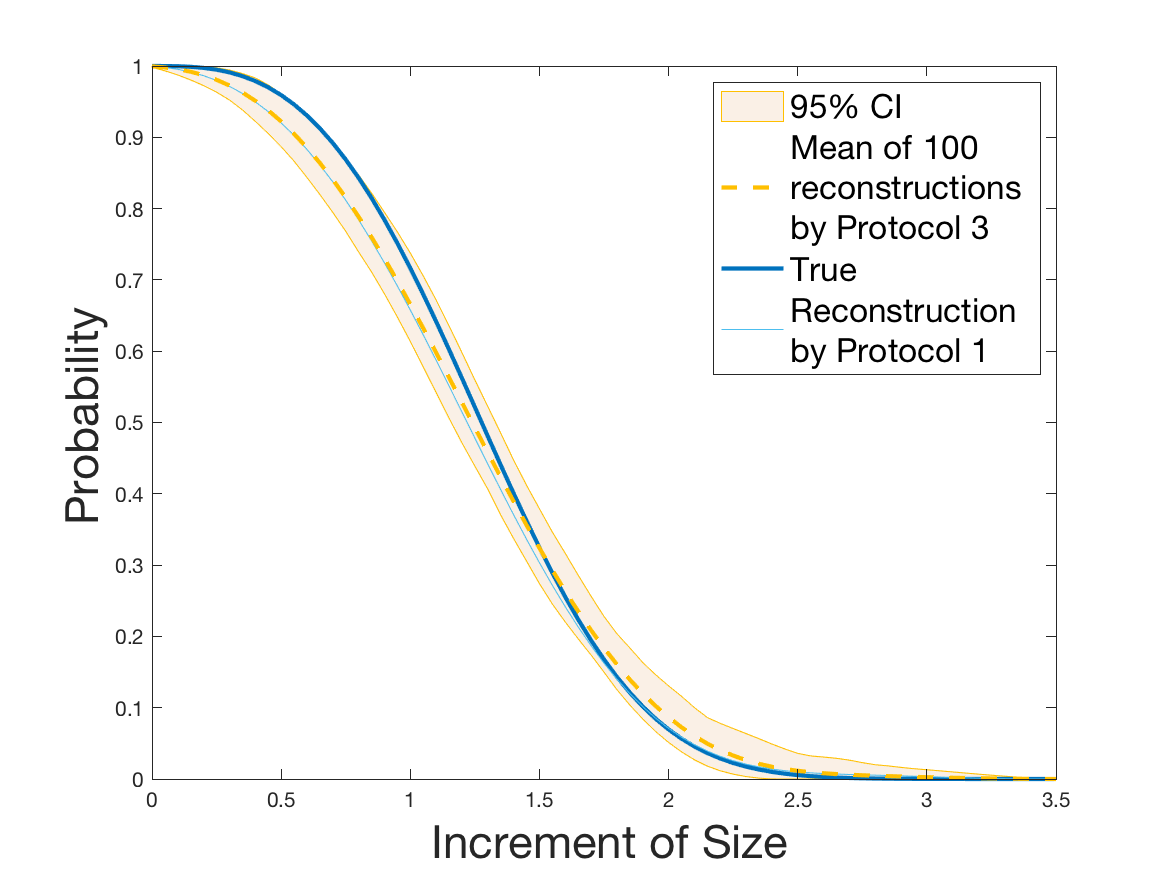}
        \caption{$S_B$, $\widehat{S}_1$ and $\widehat{S}_{3,n}$ in function of $a$}
    \end{subfigure}
 
     \begin{subfigure}[b]{0.5\textwidth} 
        \centering \includegraphics[width=\textwidth]{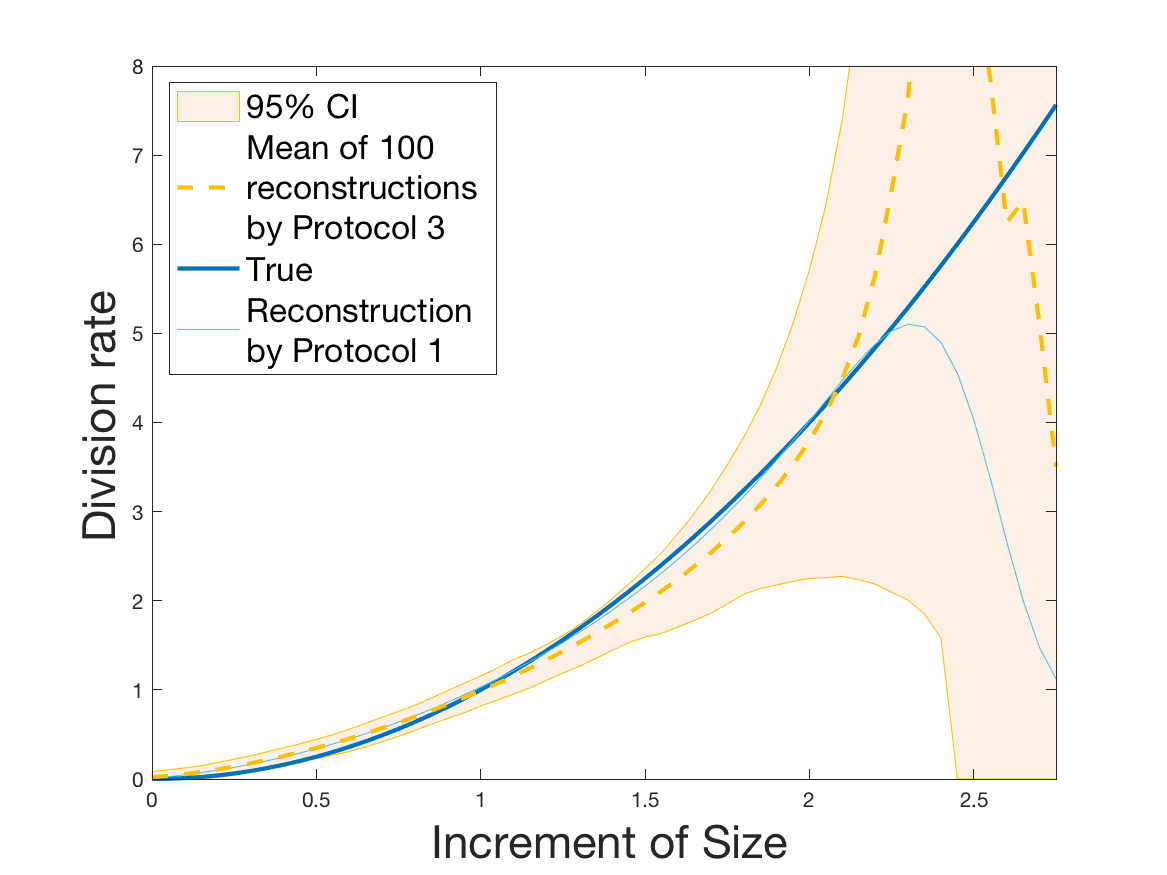} 
        \caption{$B$, $\widehat{B}_1$ and $\widehat{B}_{3,n}$ in function of $a$}
    \end{subfigure}
 
\caption{Results of Protocol 3 for $n=2000$ and $M=100$ Monte Carlo samples. ($x$ stands for size, $\xi$ for frequency and $a$ for increment of size)}%
\label{fig:resultats3_IC}%
\end{figure}

%
%

\begin{figure}[h] 
\centering
\includegraphics[width=0.80\textwidth]{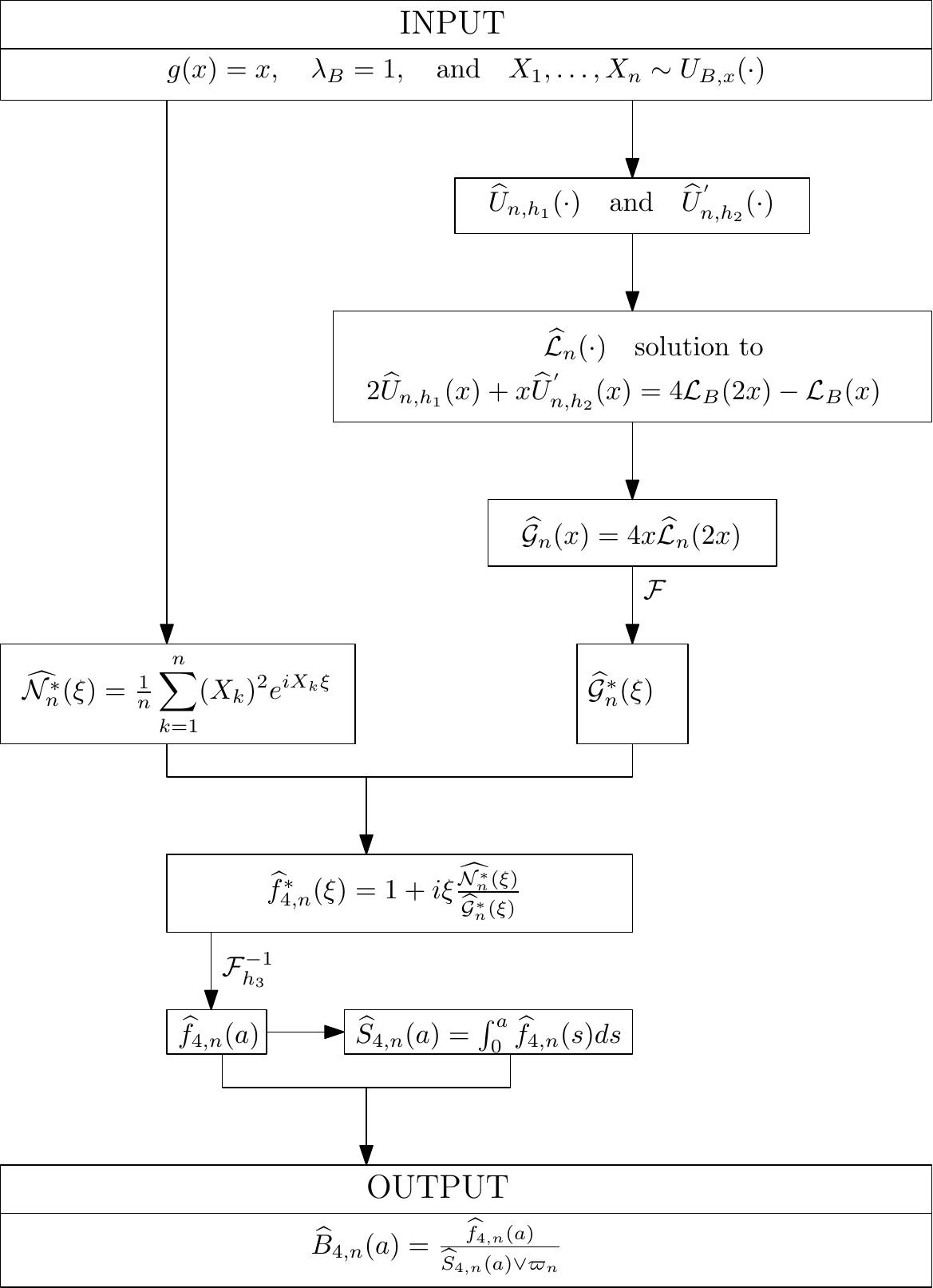} 
\caption{Protocol 4 -- Reconstruction of $B$ when both $U_{B,x}$ and $\LB$ are reconstructed from $X_1,\ldots,X_n$ i.i.d. $\sim$ $U_{B,x}$. The parameter $h_1$ is automatically chosen by the kernel smoothing function 
{\tt ksdensity}; $h_2$ is deduced from $h_1$. The oracle choice for $h_3$ gives us values that range between $1/3.25$ for $n=500$ and $1/4.5$ for $n = 50~000$. We set $\varpi_n = 1/n$.}%
\label{fig:etape4}%
\end{figure}


\begin{figure}[h]

    \begin{subfigure}[b]{0.5\textwidth} 
        \centering \includegraphics[width=\textwidth]{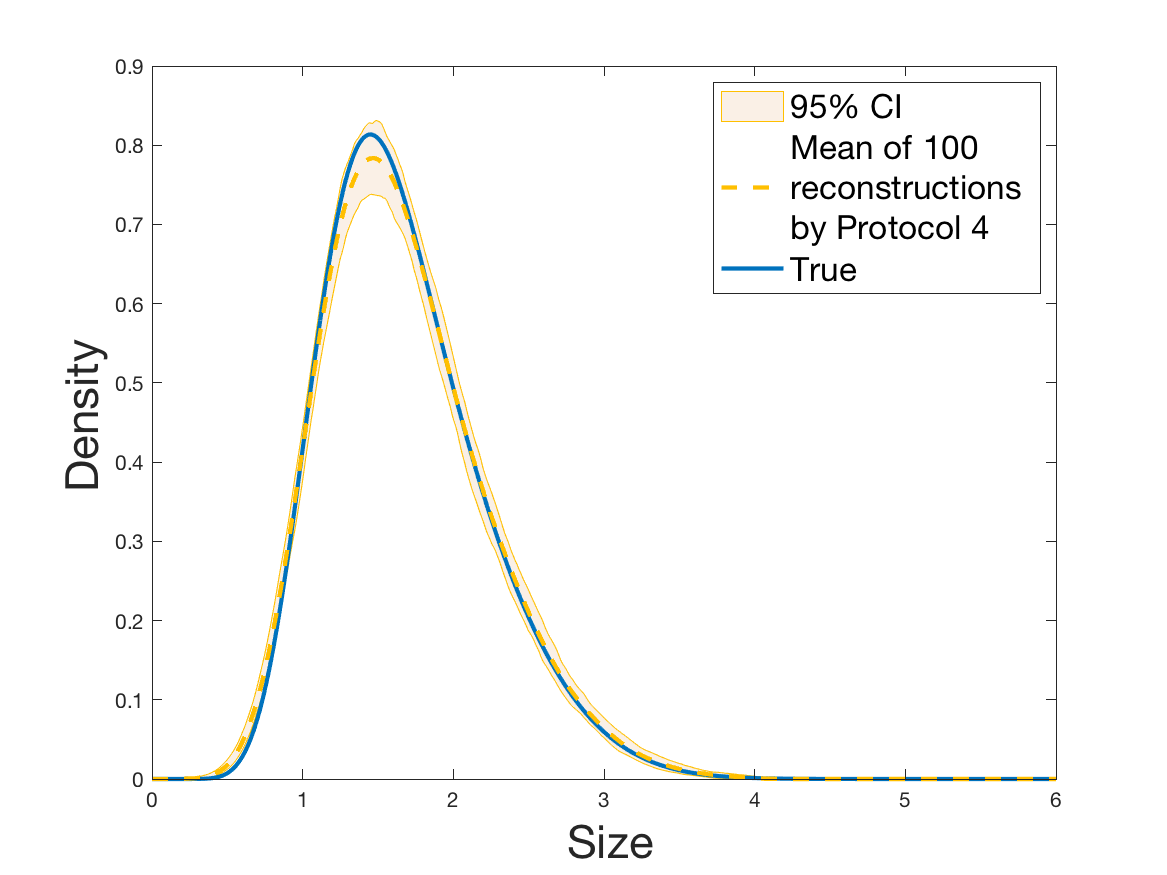}
        \caption{$U_{B,x}$ and $\widehat{U}_{n,h}$}
    \end{subfigure}
    \begin{subfigure}[b]{0.5\textwidth} 
        \centering \includegraphics[width=\textwidth]{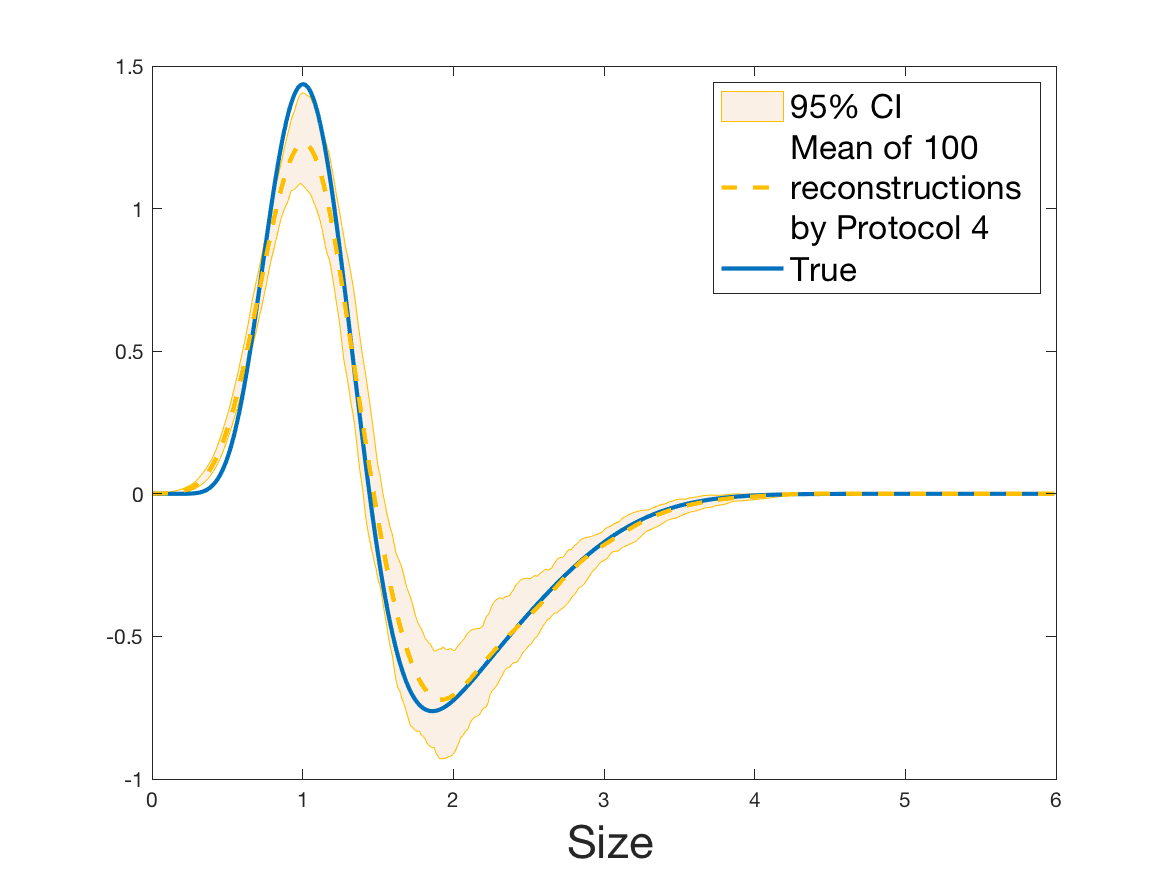}
        \caption{$U^{'}_{B,x}$ and $\widehat{U}^{'}_{n,h^{'}}$}
    \end{subfigure}

    \begin{subfigure}[b]{0.5\textwidth} 
        \centering \includegraphics[width=\textwidth]{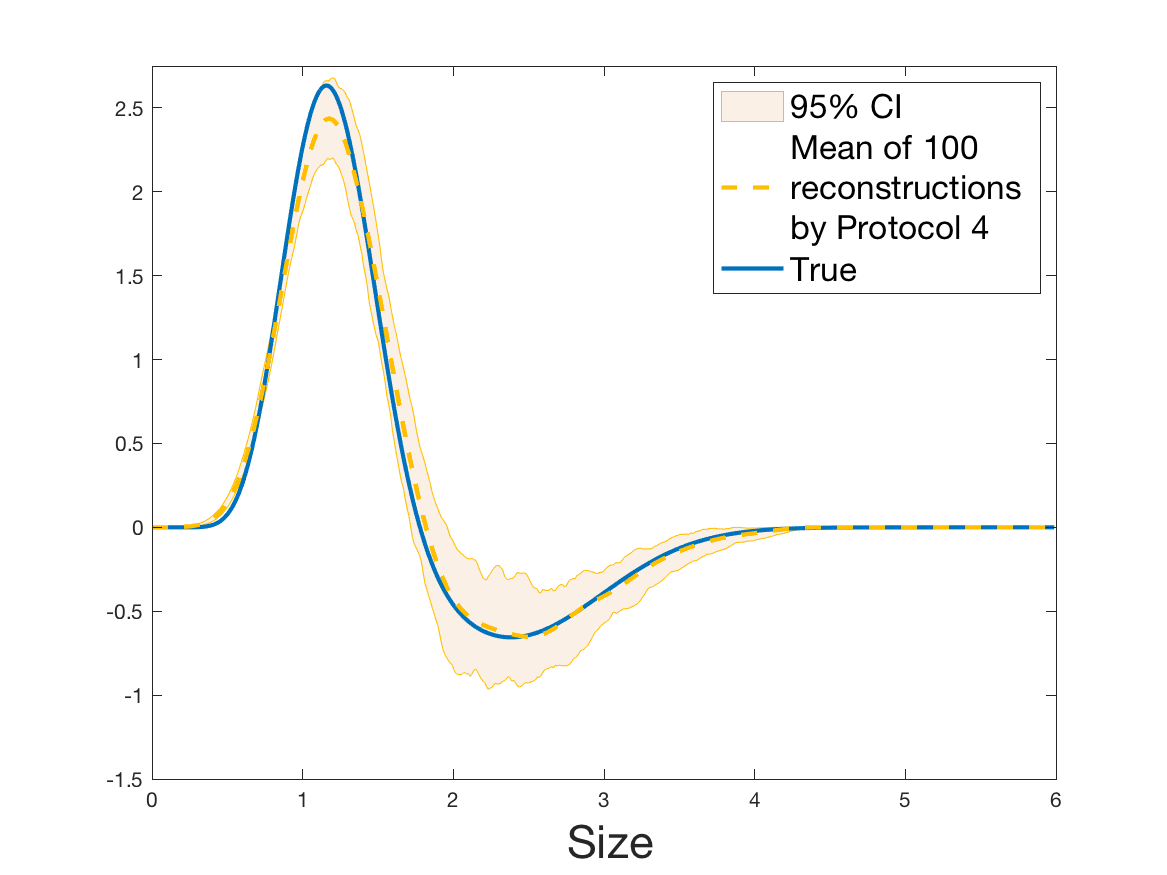}
        \caption{$2 U_{B,x} + x U^{'}_{B,x}$ and $2 \widehat{U}_{n,h} + \widehat{U}^{'}_{n,h^{'}}$}
    \end{subfigure}
    \begin{subfigure}[b]{0.5\textwidth} 
        \centering \includegraphics[width=\textwidth]{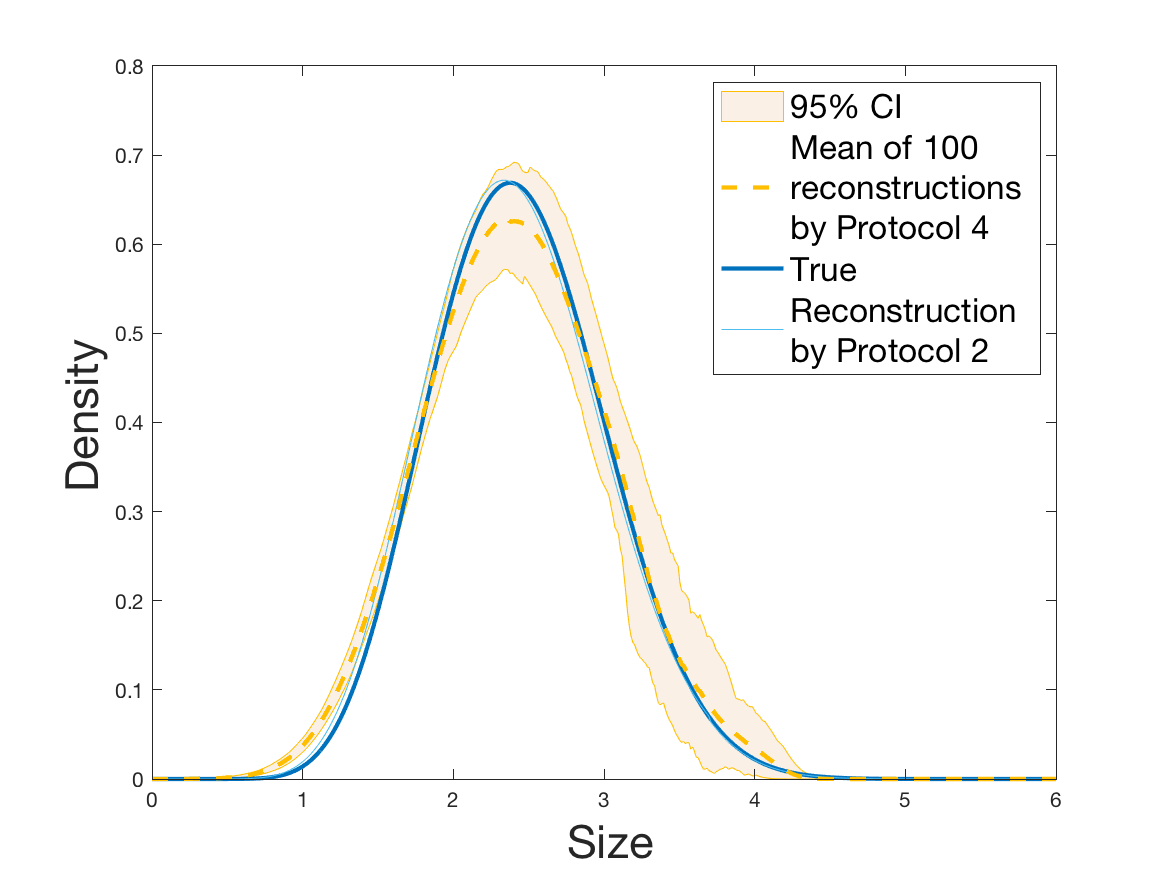} 
        \caption{$\mathcal L_B$ and $\widehat{\mathcal L}_n$ in function of $x$}
    \end{subfigure}
    
    \begin{subfigure}[b]{0.5\textwidth} 
        \centering \includegraphics[width=\textwidth]{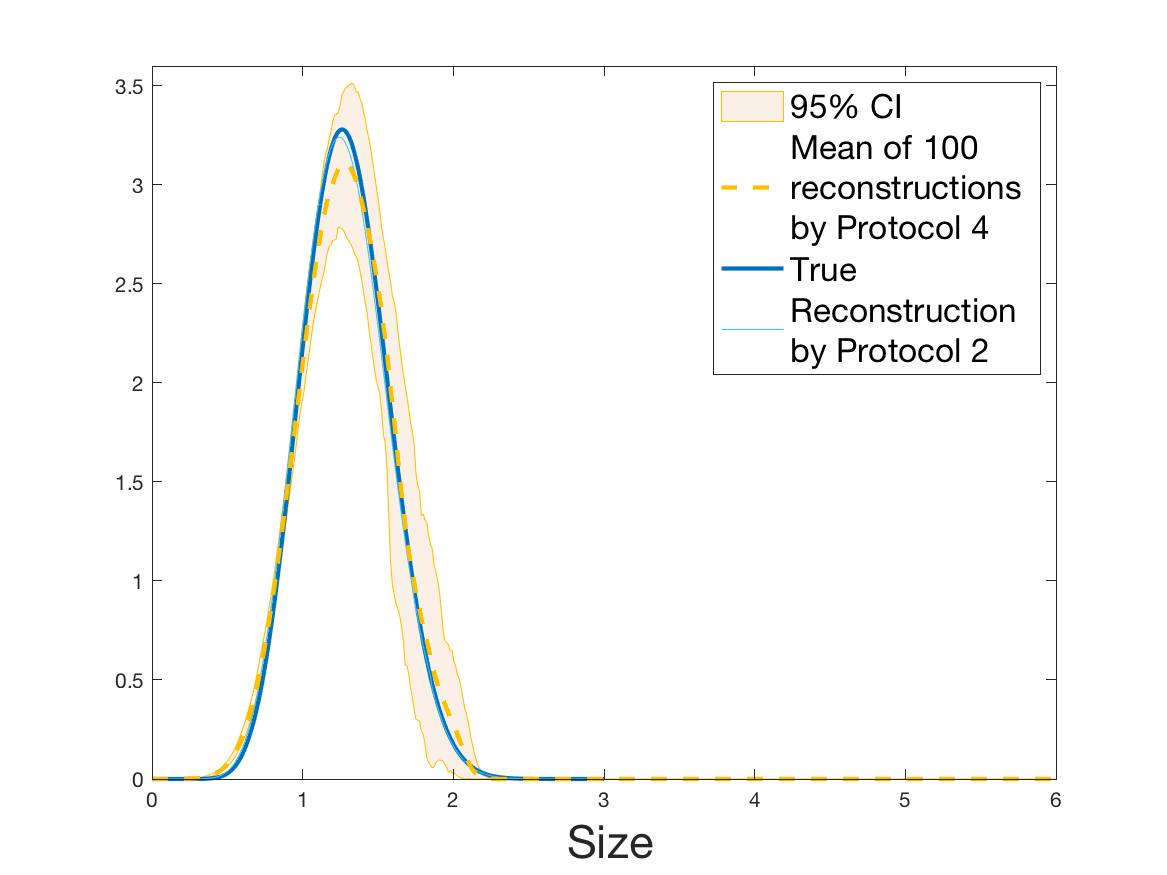}
        \caption{$\mathcal G_B$ and $\widehat{\mathcal G}_n$ in function of $x$}
    \end{subfigure}
    \begin{subfigure}[b]{0.5\textwidth} 
        \centering \includegraphics[width=\textwidth]{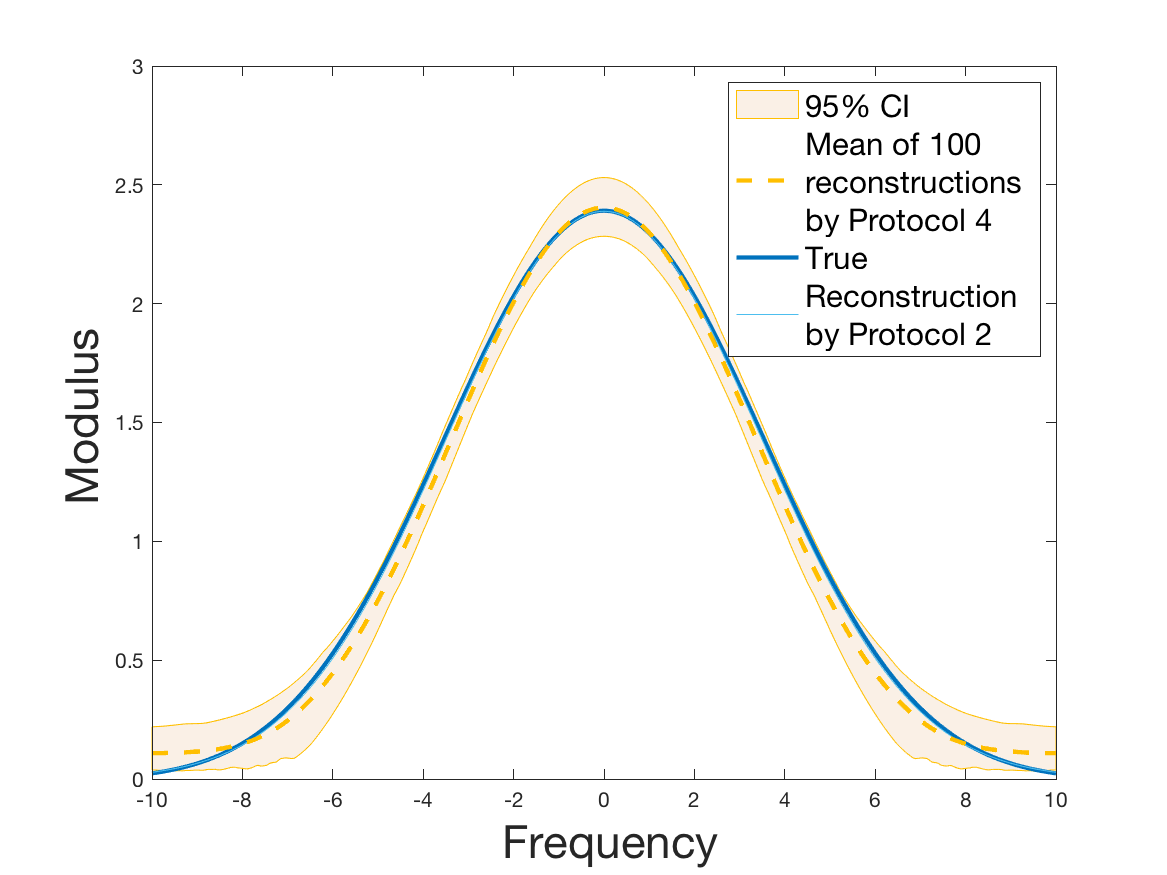}
        \caption{$|\mathcal G^*_B|$, $|\widehat{\mathcal G}^*_{2}|$ and $|\widehat{\mathcal G}^*_{n}|$ in function of $\xi$}
    \end{subfigure}

\end{figure}

\begin{figure}[htb]\ContinuedFloat 
    
    \begin{subfigure}[b]{0.5\textwidth} 
        \centering \includegraphics[width=\textwidth]{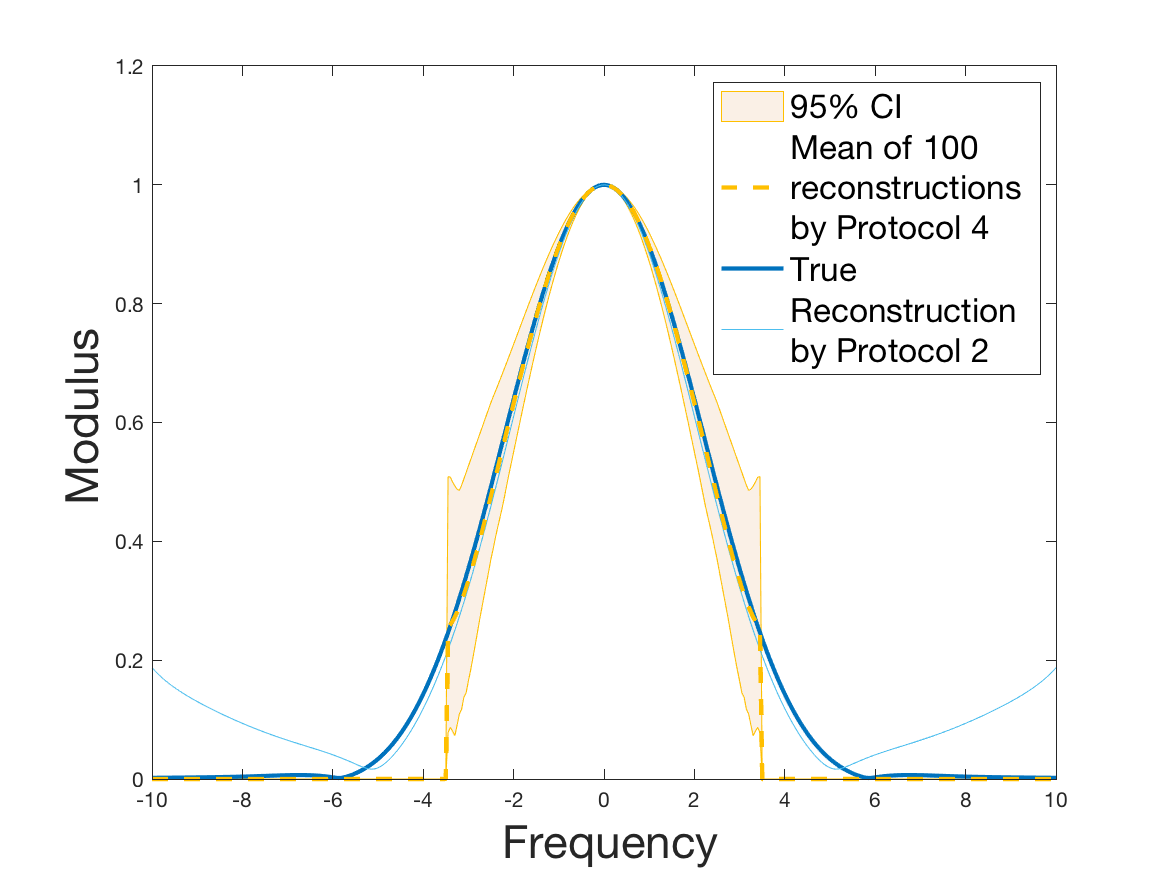}
        \caption{$|f_B^*|$, $|\widehat{f}^*_2|$ and $|\widehat{f}^*_{4,n}|$ in function of $\xi$}
    \end{subfigure}
    \begin{subfigure}[b]{0.5\textwidth} 
        \centering \includegraphics[width=\textwidth]{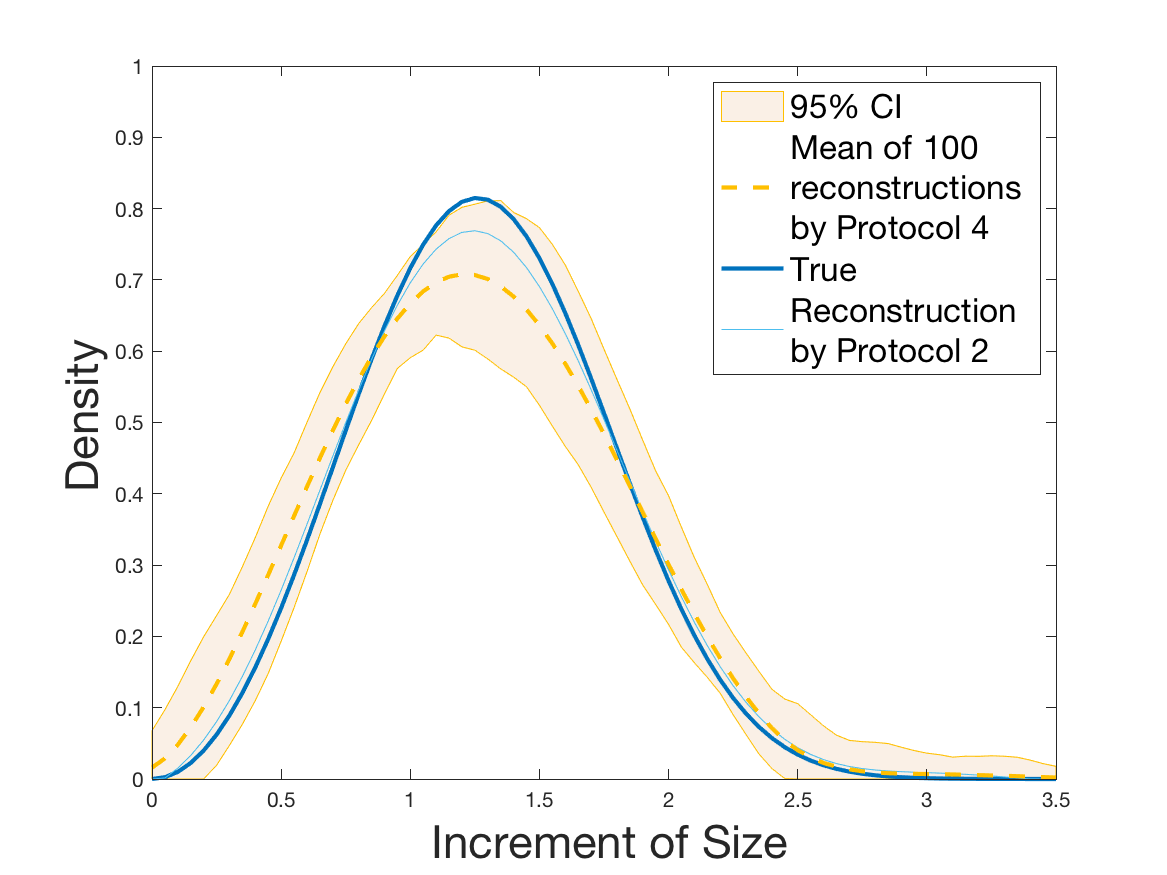}
        \caption{$f_B$, $\widehat{f}_2$ and $\widehat{f}_{4,n}$ in function of $a$}
    \end{subfigure}
    
    \begin{subfigure}[b]{0.5\textwidth} 
        \centering \includegraphics[width=\textwidth]{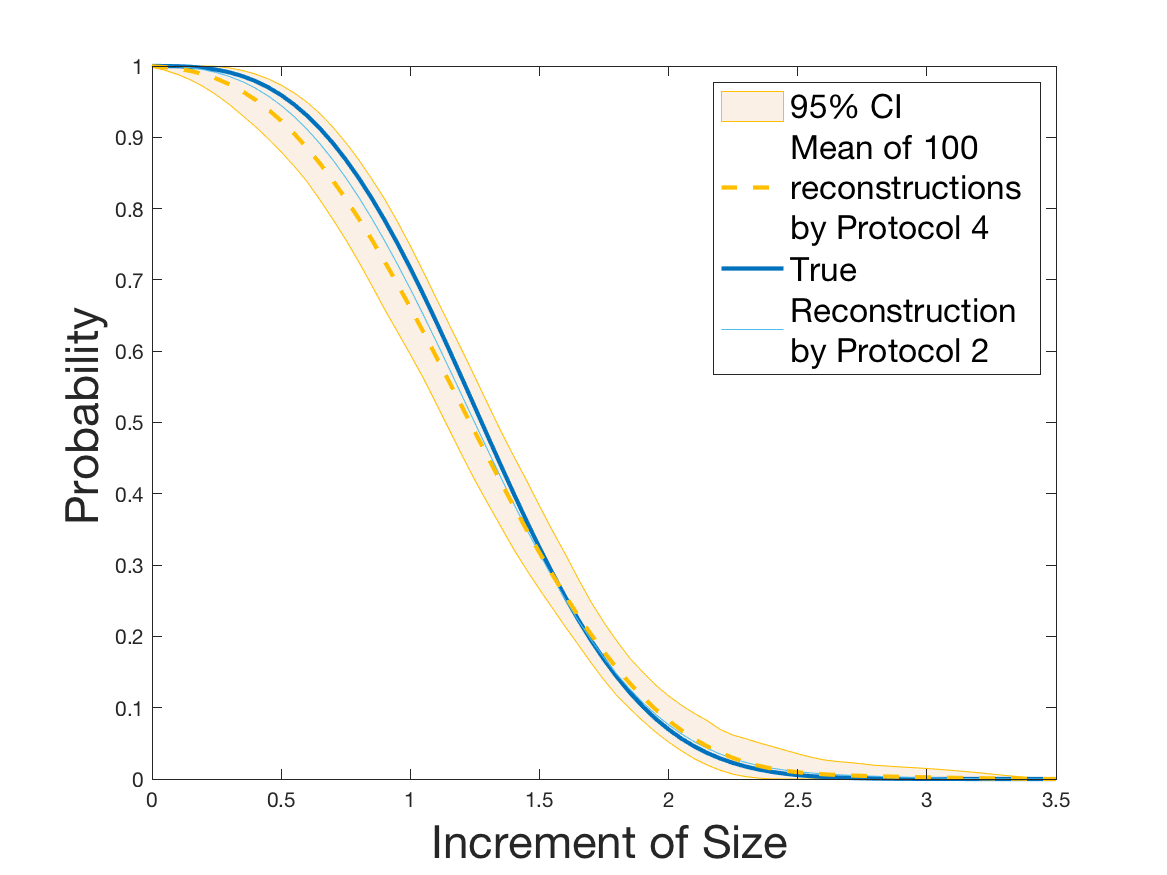}
        \caption{$S_B$, $\widehat{S}_2$ and $\widehat{S}_{4,n}$ in function of $a$}
    \end{subfigure}
    \begin{subfigure}[b]{0.5\textwidth} 
        \centering \includegraphics[width=\textwidth]{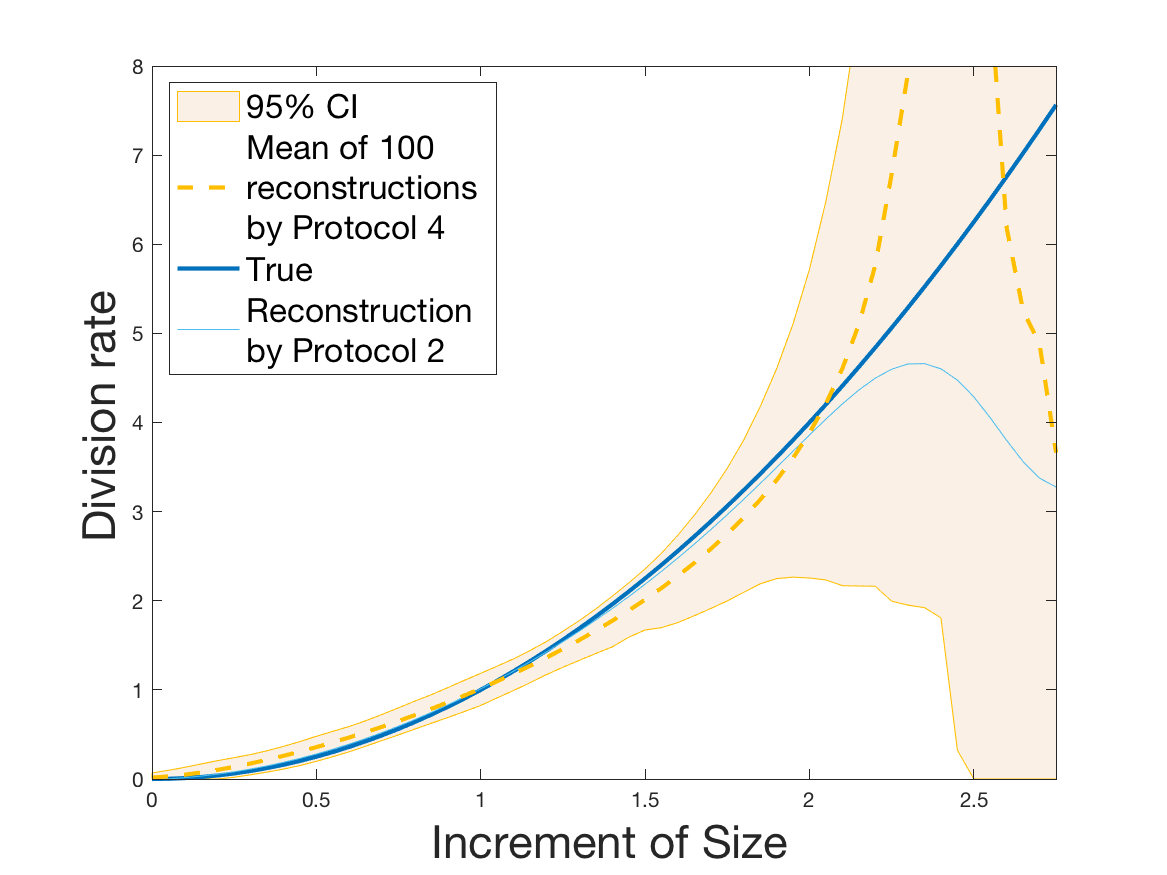}
        \caption{$B$, $\widehat{B}_2$ and $\widehat{B}_{4,n}$ in function of $a$}
    \end{subfigure}
\caption{Results of Protocol 4 for $n=2000$ and $M=100$ Monte Carlo samples.}%
\label{fig:resultats4_IC}%
\end{figure}

\begin{figure}[h]
    \begin{subfigure}[b]{0.5\textwidth} 
        \centering \includegraphics[width=\textwidth]{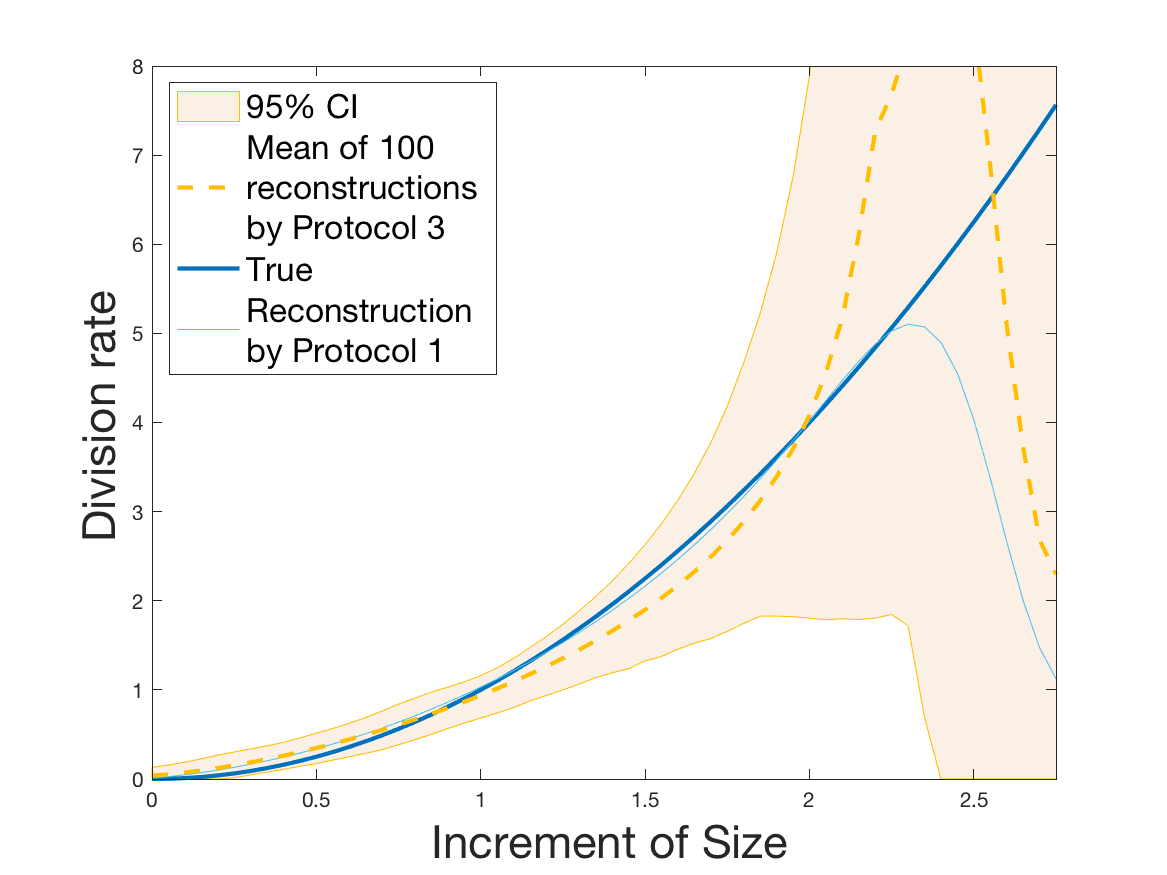}  
        \caption{$\widehat{B}_{3,n}$ with $n=500$}
    \end{subfigure}
    \begin{subfigure}[b]{0.5\textwidth} 
        \centering \includegraphics[width=\textwidth]{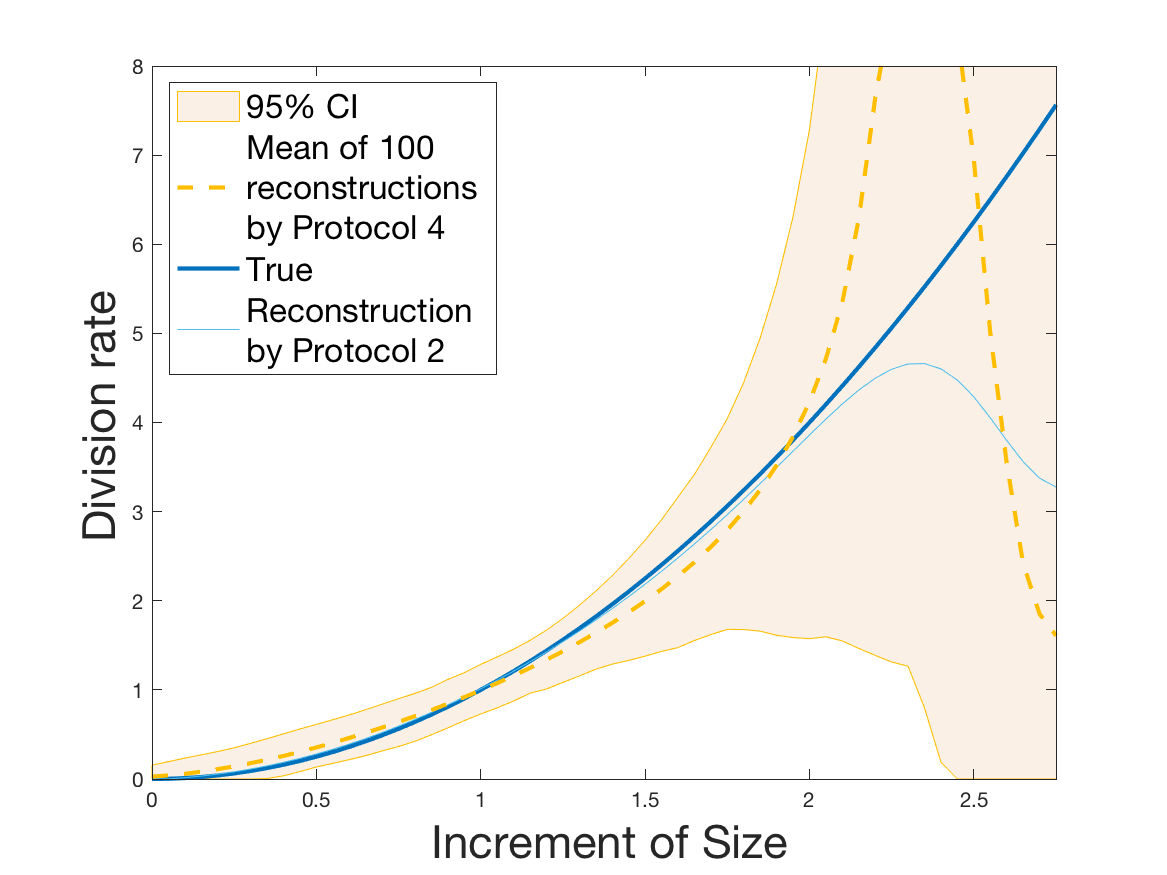} 
        \caption{$\widehat{B}_{4n}$ with $n=500$}
    \end{subfigure}
    
     \begin{subfigure}[b]{0.5\textwidth} 
        \centering \includegraphics[width=\textwidth]{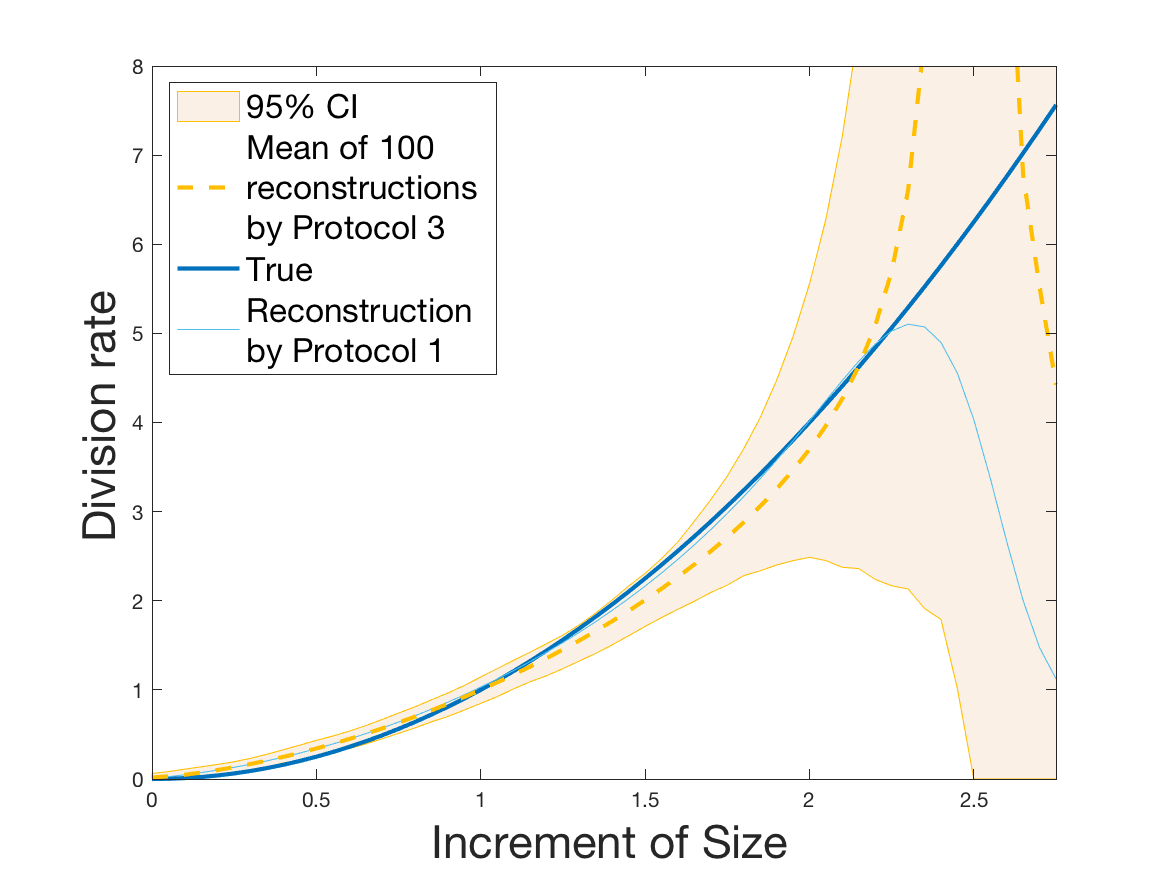}  
        \caption{$\widehat{B}_{3,n}$ with $n=5~000$}
    \end{subfigure}
    \begin{subfigure}[b]{0.5\textwidth} 
        \centering \includegraphics[width=\textwidth]{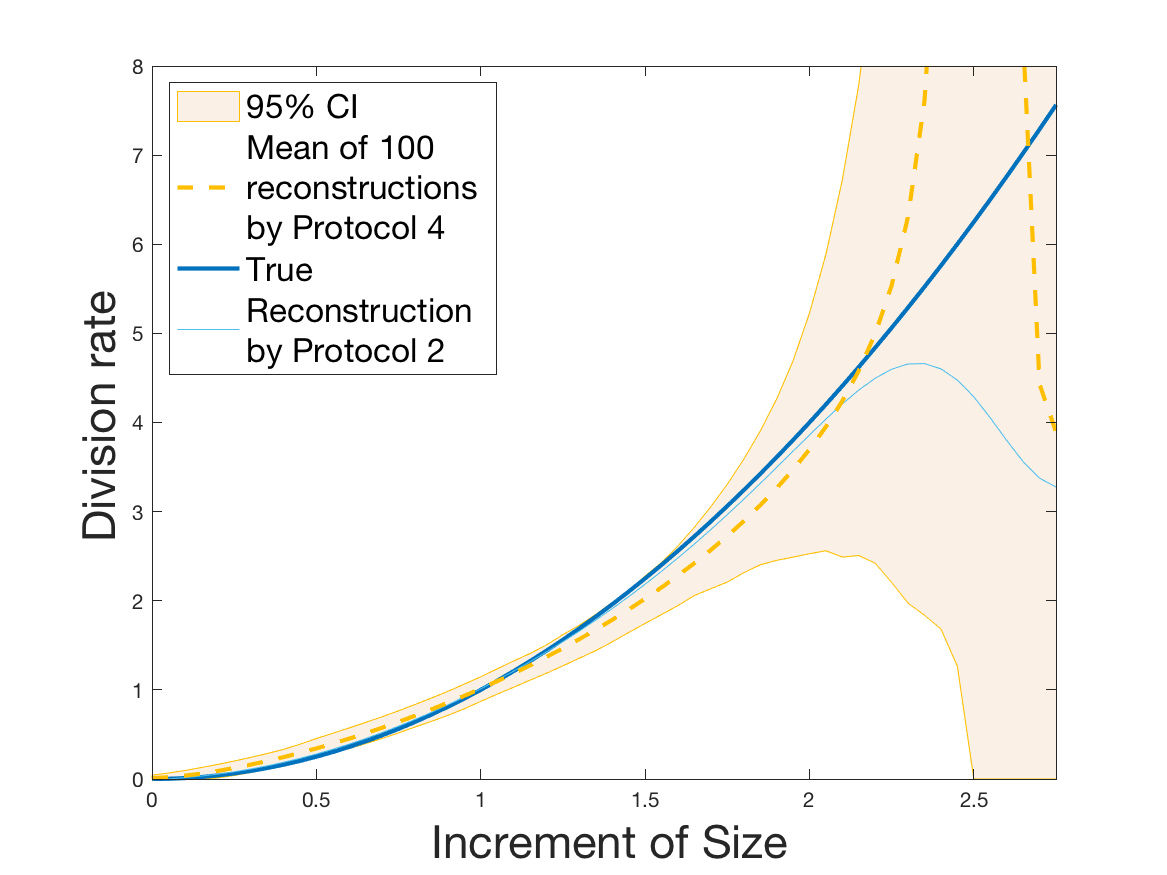} 
        \caption{$\widehat{B}_{4,n}$ with $n=5~000$}
    \end{subfigure}
  
    \begin{subfigure}[b]{0.5\textwidth} 
        \centering \includegraphics[width=\textwidth]{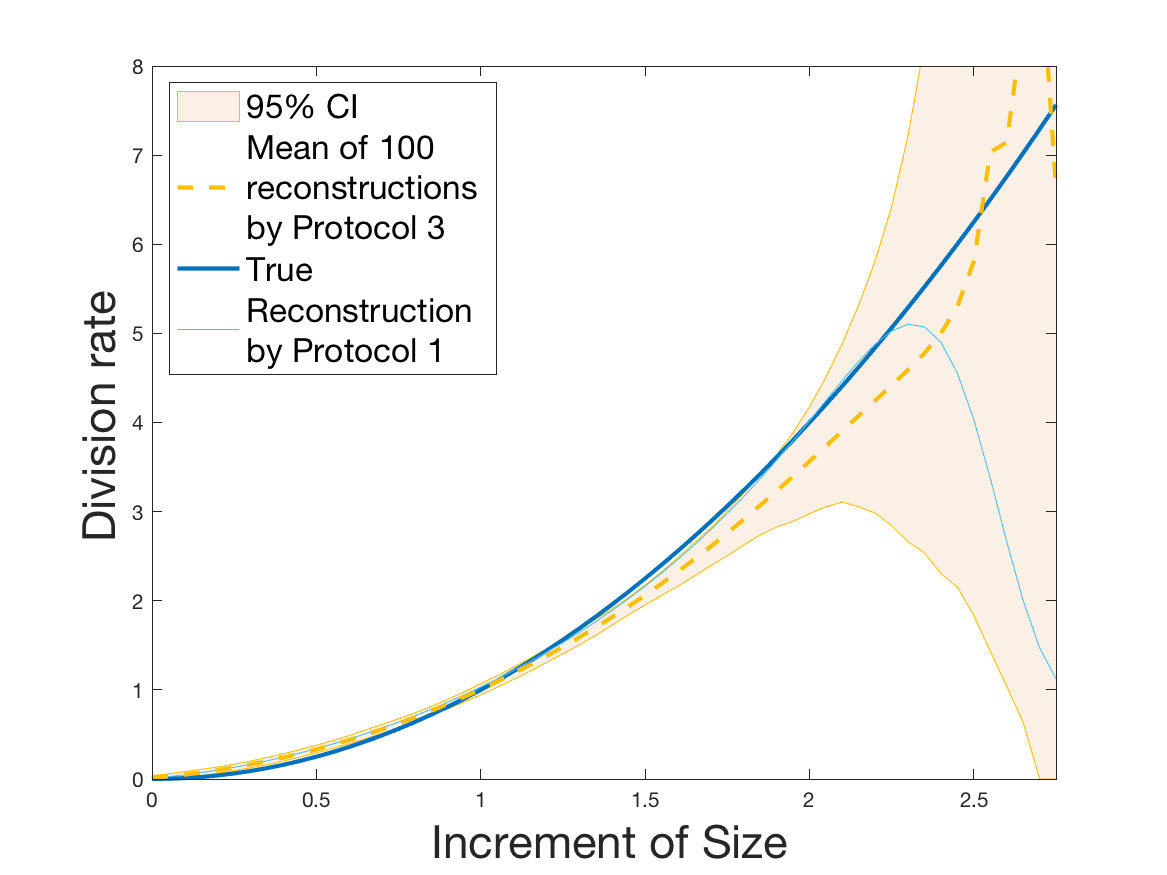}  
        \caption{$\widehat{B}_{3,n}$ with $n=50~000$}
    \end{subfigure}
    \begin{subfigure}[b]{0.5\textwidth} 
        \centering \includegraphics[width=\textwidth]{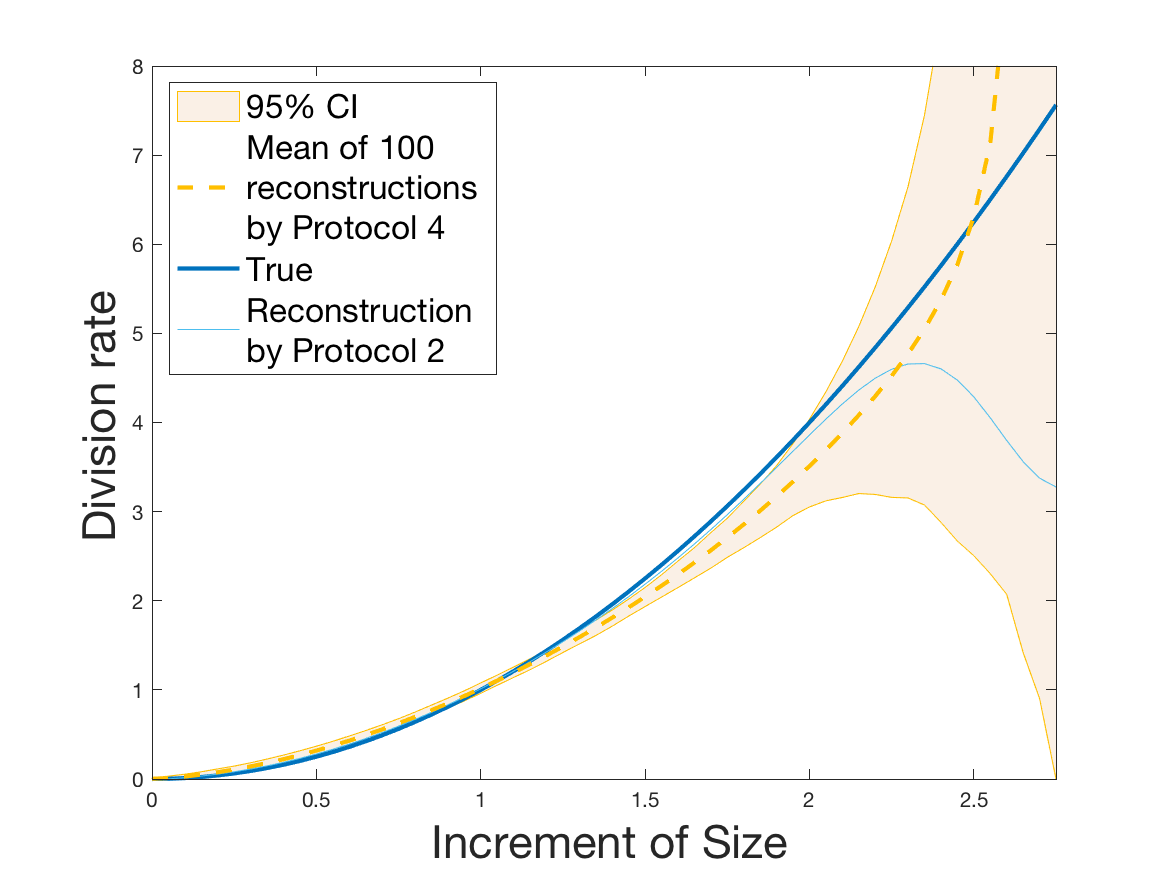} 
        \caption{$\widehat{B}_{4,n}$ with $n=50~000$}
    \end{subfigure} 
\caption{Results of Protocols 3 and 4 -- Estimation of the division rate $B(a) = a^2$ in function of the increment of size $a$ for different $n$ (500, 5~000, 10~000) and $M=100$ Monte Carlo samples. \label{fig:resultats34_IC_n}} %
\end{figure}

\begin{figure}[h]
    \begin{subfigure}[b]{0.5\textwidth} 
        \centering \includegraphics[width=\textwidth]{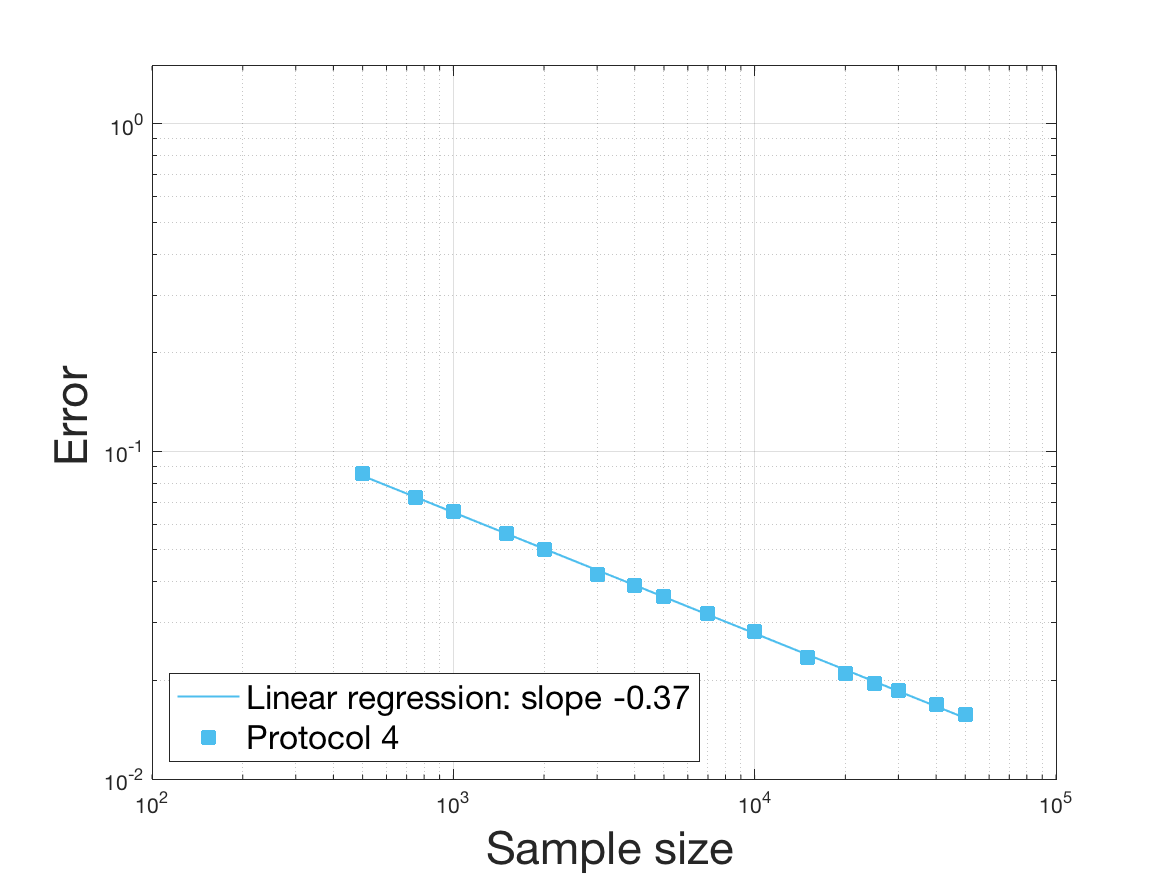}
        \caption{Estimation of $U_{B,x}$}
    \end{subfigure}
    \begin{subfigure}[b]{0.5\textwidth} 
        \centering \includegraphics[width=\textwidth]{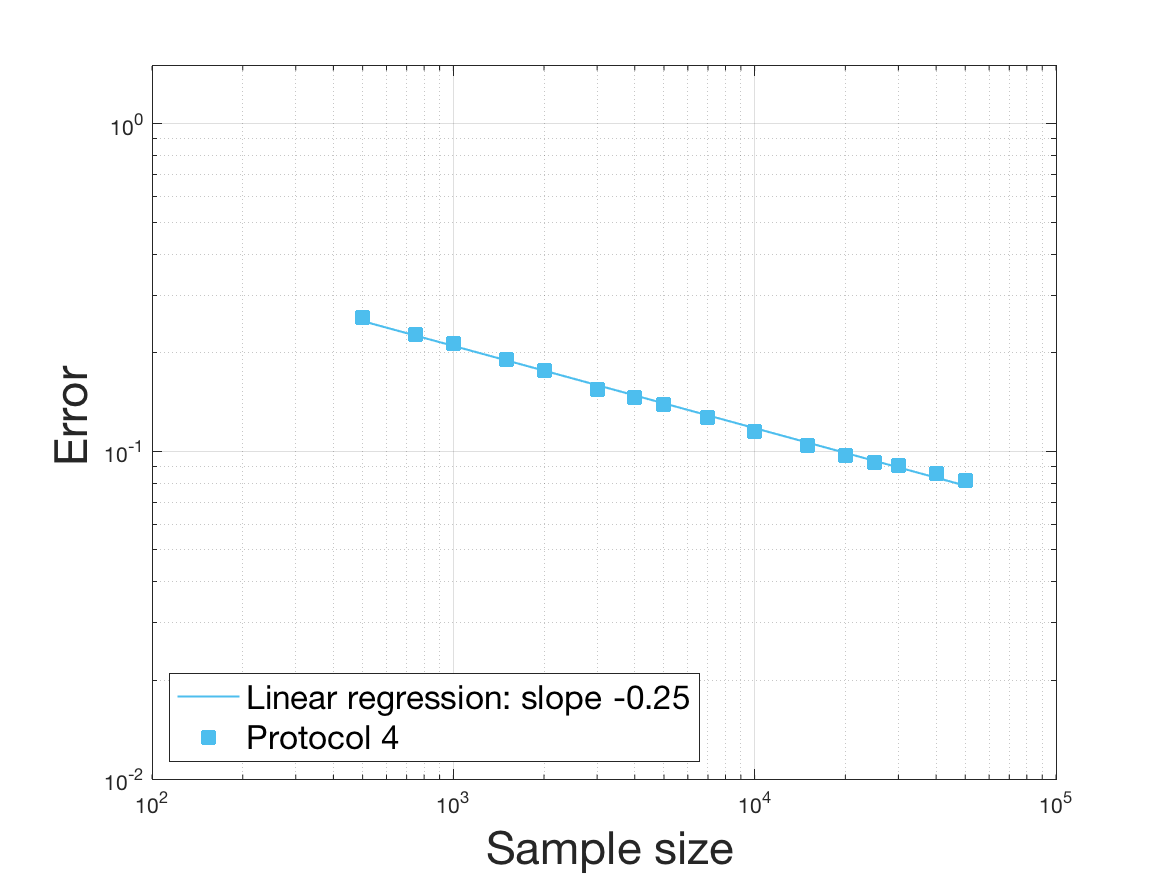} 
        \caption{Estimation of $U^{'}_{B,x}$}
    \end{subfigure}
    
    \begin{subfigure}[b]{0.5\textwidth} 
        \centering \includegraphics[width=\textwidth]{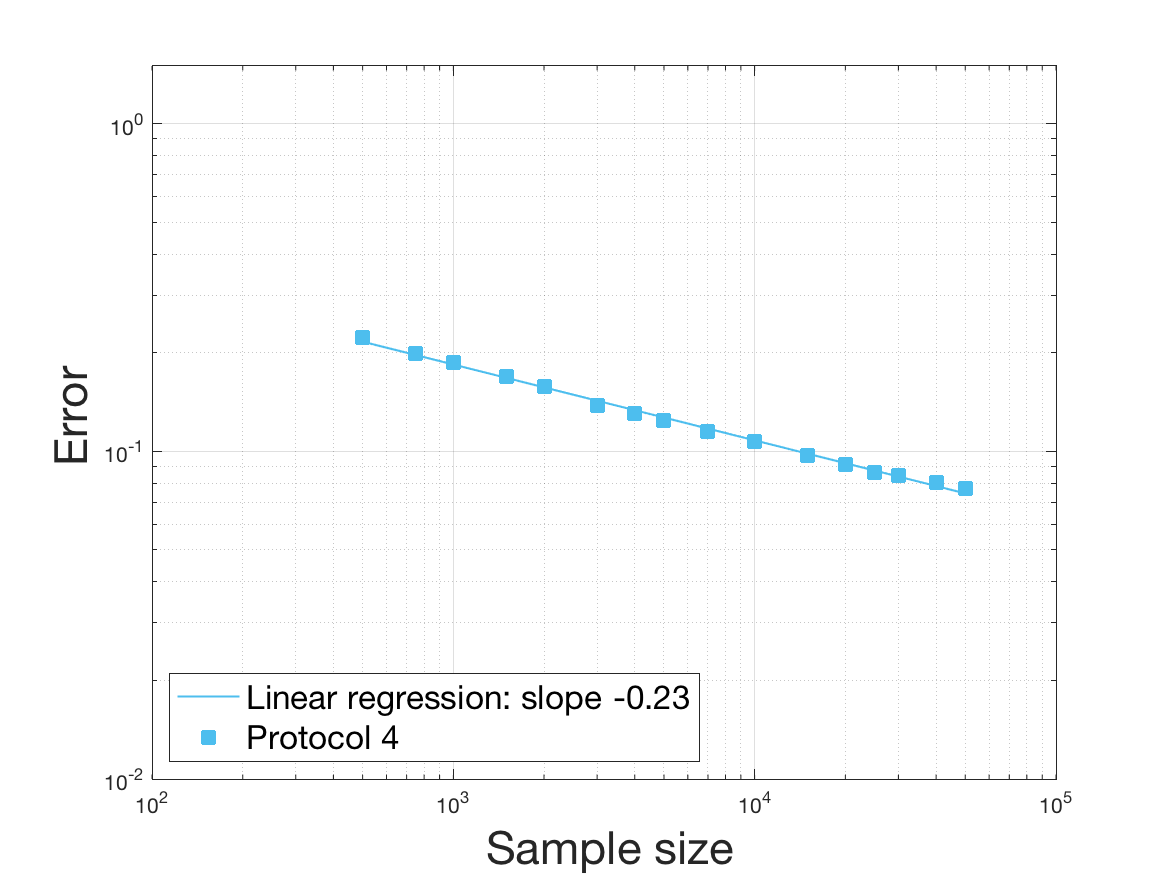}
        \caption{Estimation of $2 U_{B,x} + x U^{'}_{B,x}$}
    \end{subfigure}
    \begin{subfigure}[b]{0.5\textwidth} 
        \centering \includegraphics[width=\textwidth]{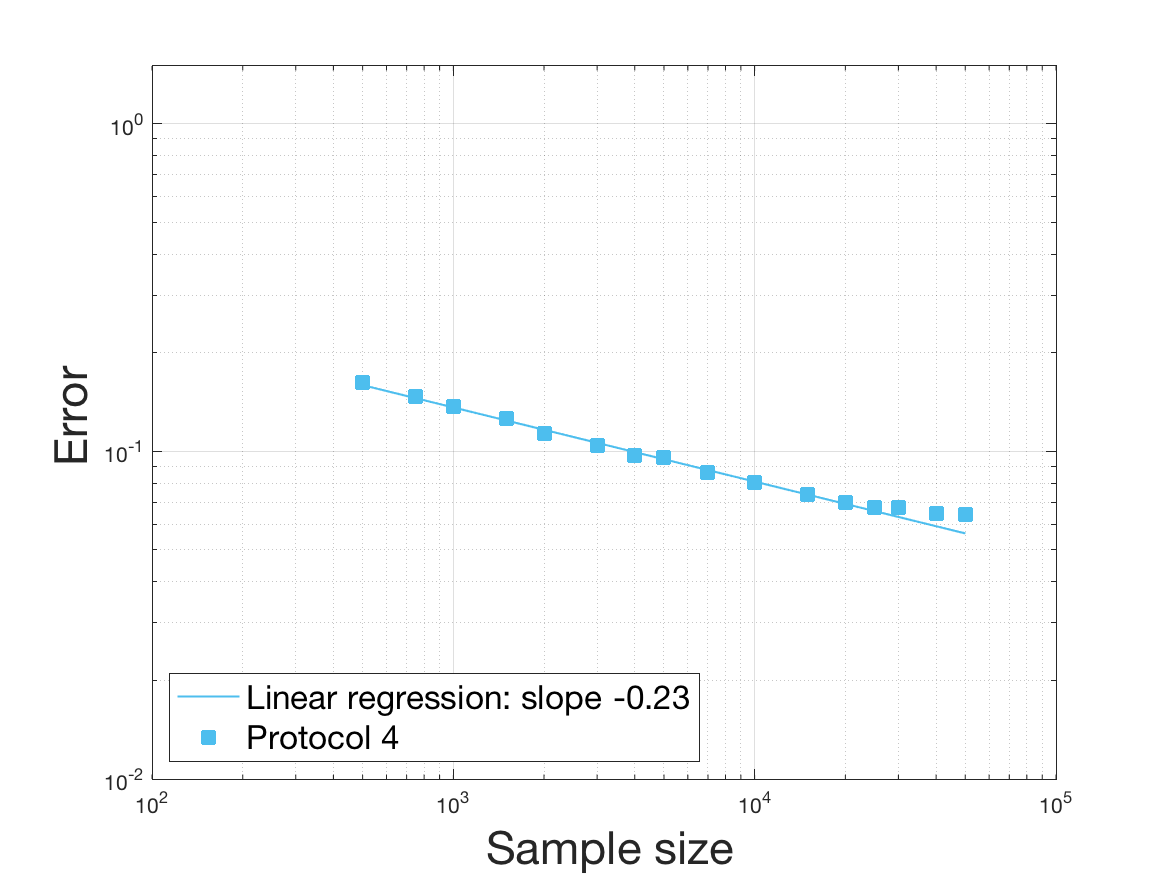}
        \caption{Estimation of $\mathcal L_B$}
    \end{subfigure}
    
    \begin{subfigure}[b]{0.5\textwidth} 
        \centering \includegraphics[width=\textwidth]{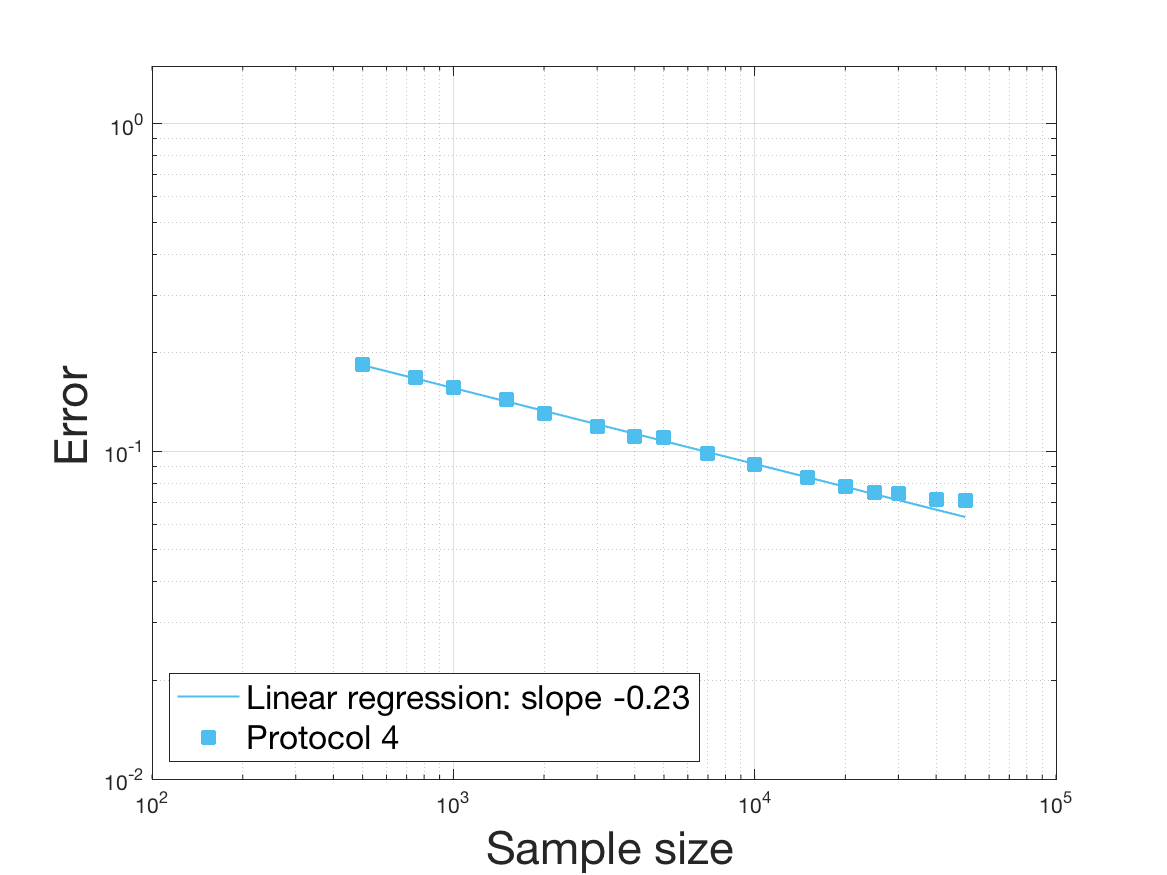}
        \caption{Estimation of $\mathcal G_B$}
    \end{subfigure}
    \begin{subfigure}[b]{0.5\textwidth} 
        \centering \includegraphics[width=\textwidth]{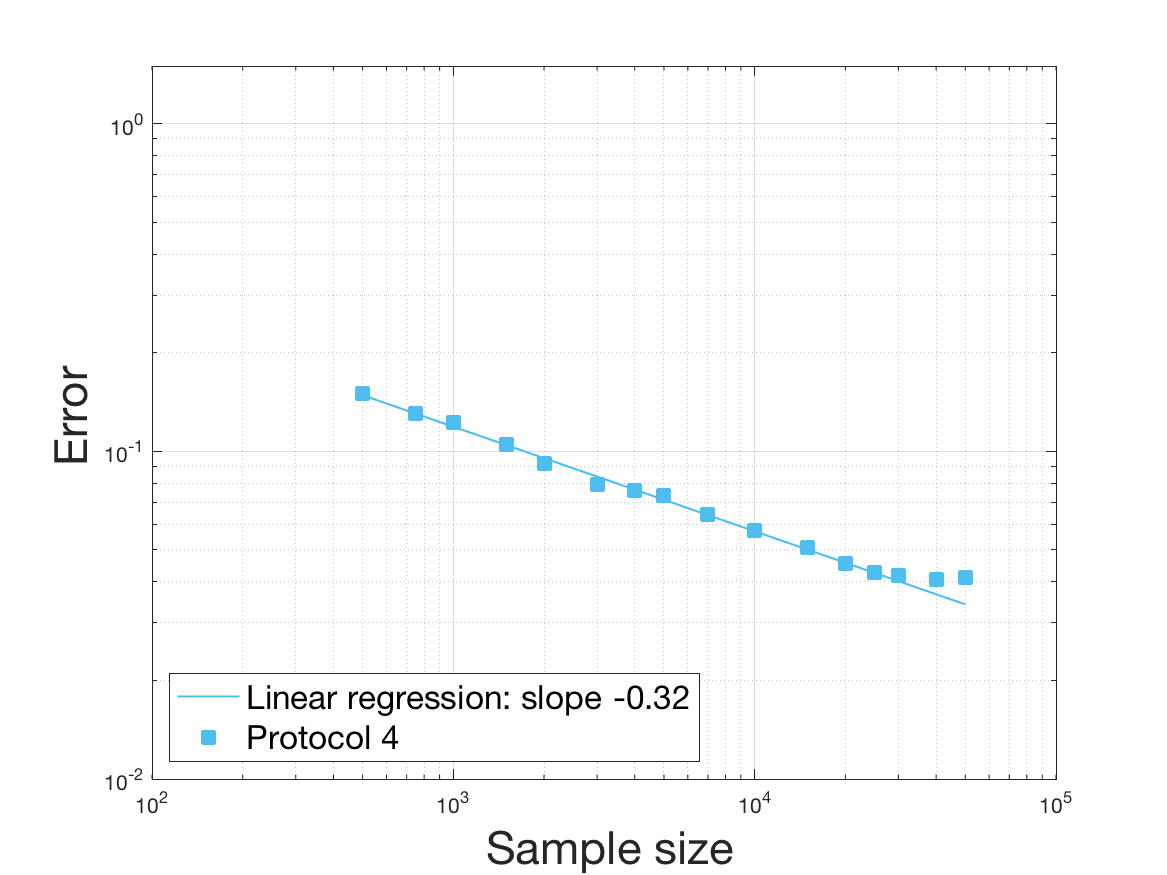}
        \caption{Estimation of $\mathcal G^*_B$}
    \end{subfigure}

\end{figure}

\begin{figure}[h]
    \begin{subfigure}[b]{0.5\textwidth} 
        \centering \includegraphics[width=\textwidth]{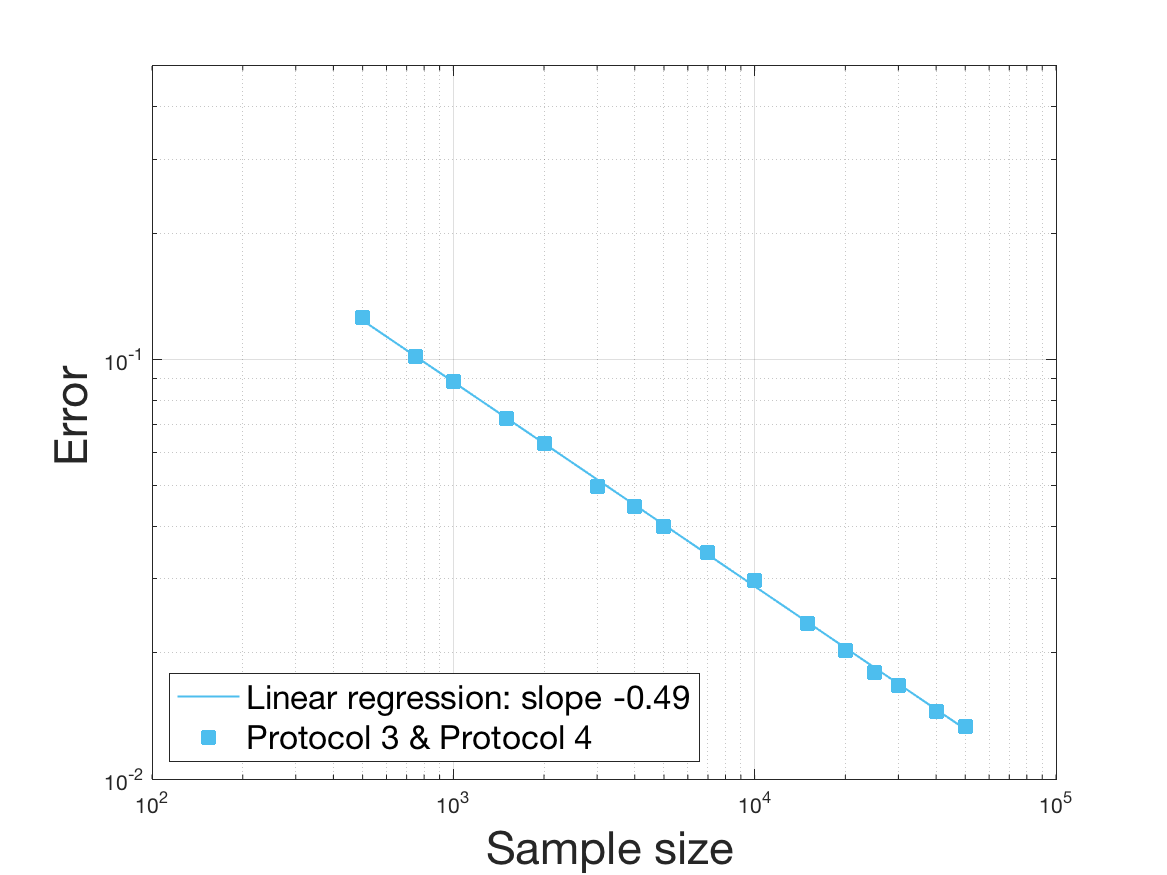} 
        \caption{Estimation of $\mathcal N^*_B$}
    \end{subfigure}
    \begin{subfigure}[b]{0.5\textwidth} 
        \centering \includegraphics[width=\textwidth]{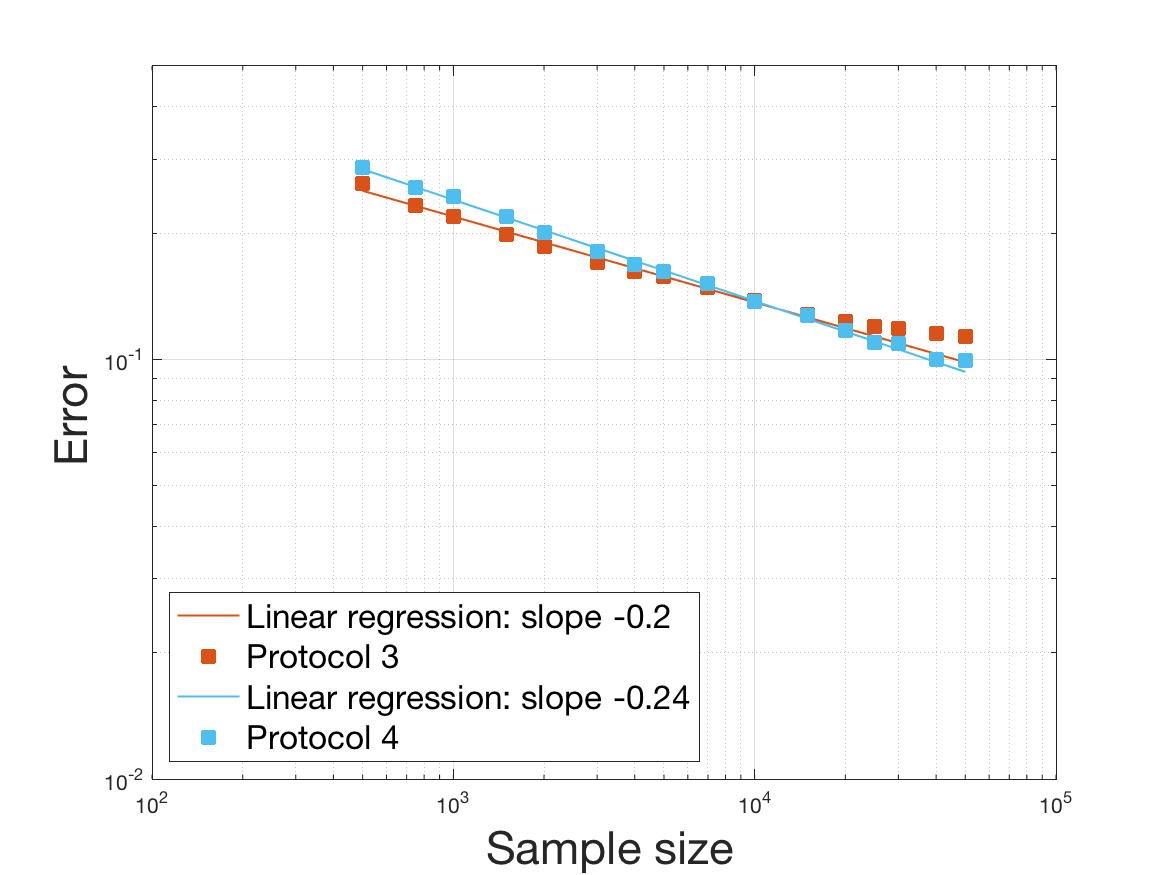}
        \caption{Estimation of $f_B^*$}
    \end{subfigure}
    
    \begin{subfigure}[b]{0.5\textwidth} 
        \centering \includegraphics[width=\textwidth]{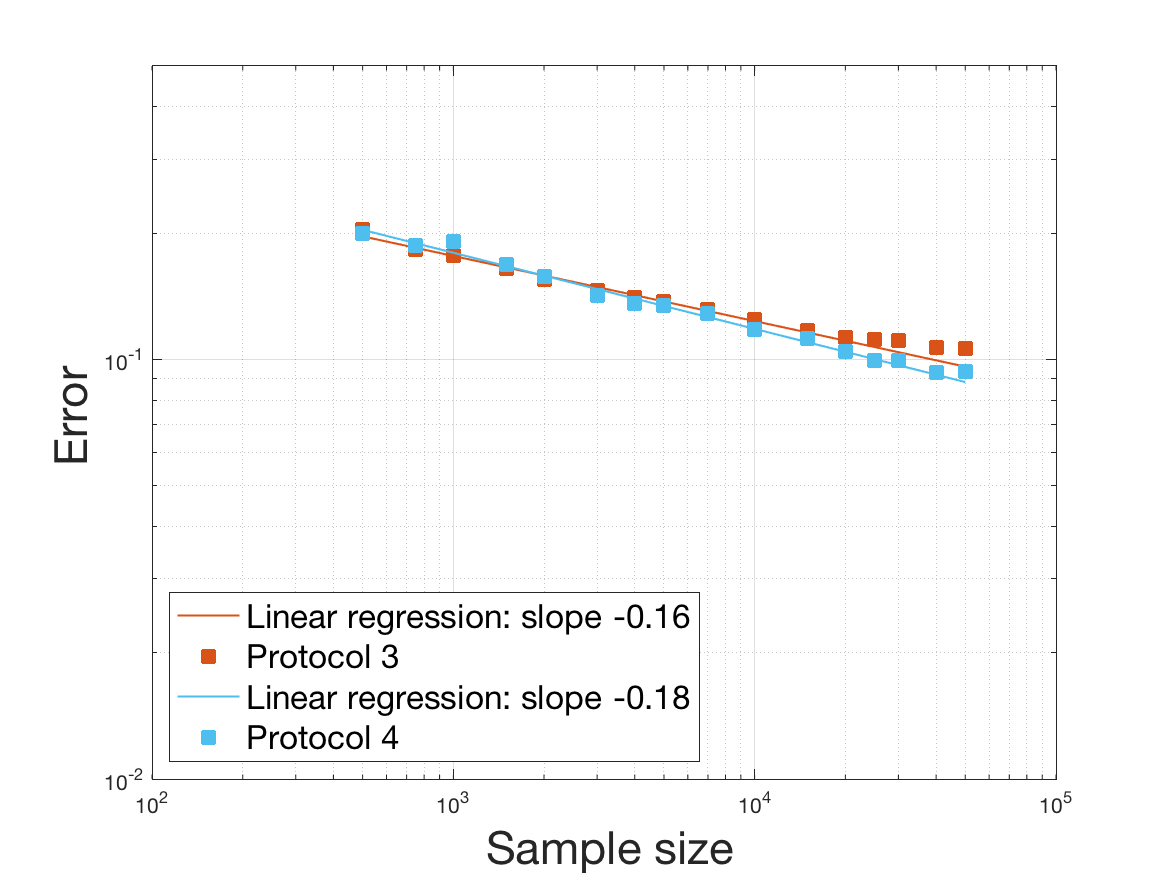} 
        \caption{Estimation of $f_B$}
    \end{subfigure}
    \begin{subfigure}[b]{0.5\textwidth} 
        \centering \includegraphics[width=\textwidth]{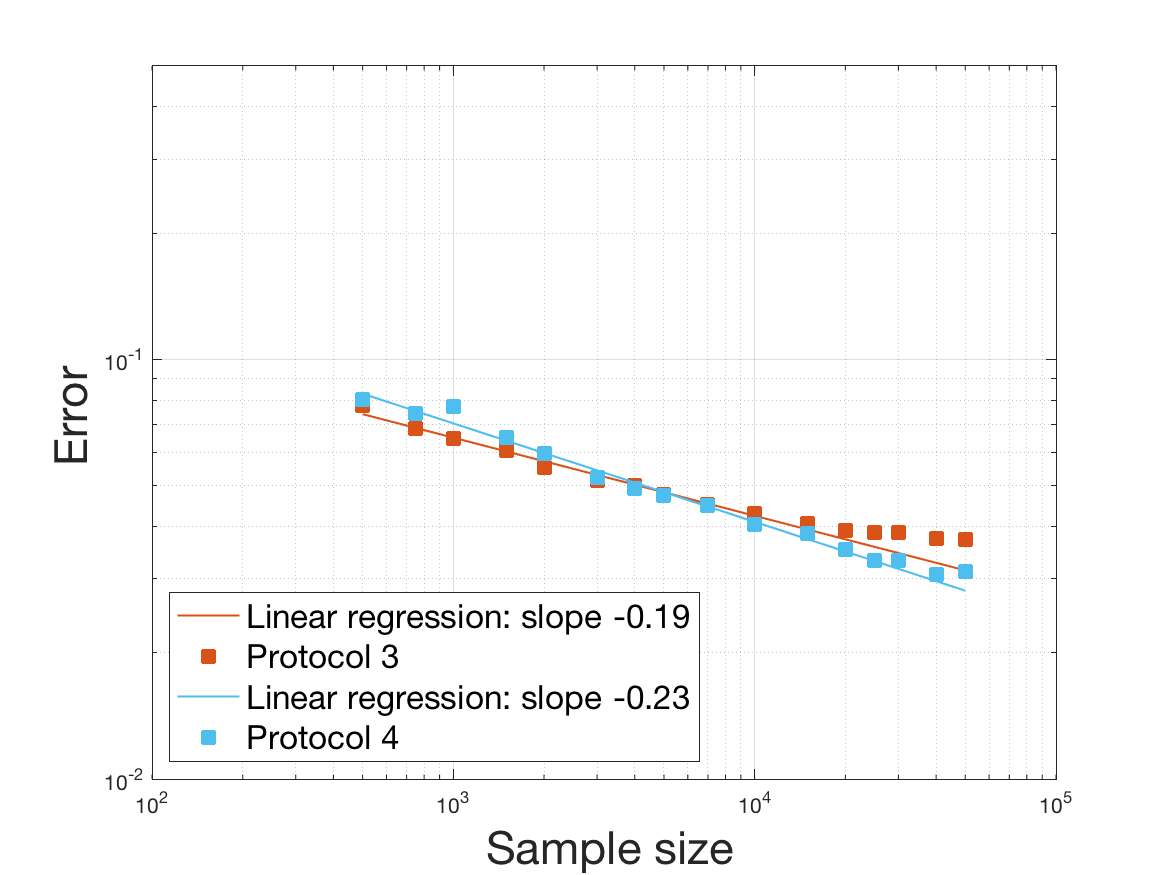} 
        \caption{Estimation of $S_B$}
    \end{subfigure}
    
     \begin{subfigure}[b]{0.5\textwidth} 
        \centering \includegraphics[width=\textwidth]{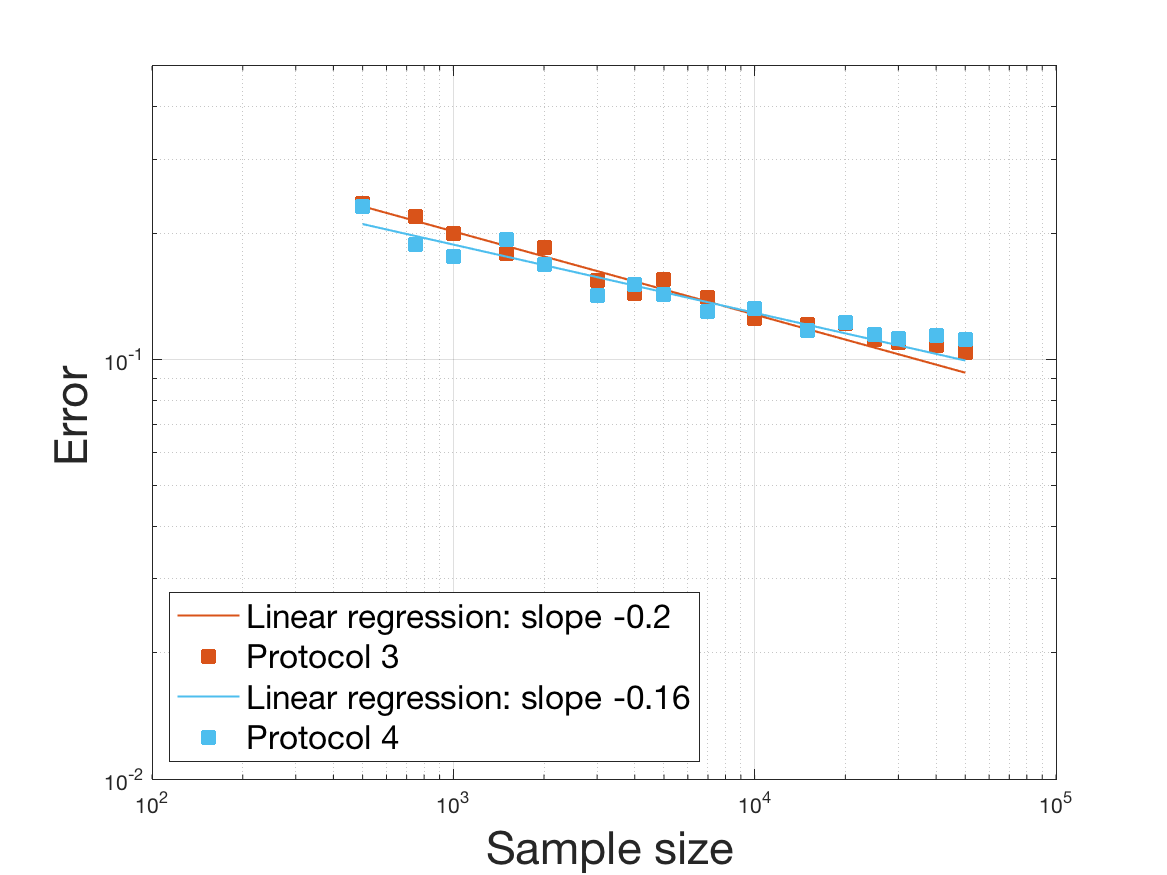} 
        \caption{Estimation of $B$}
    \end{subfigure}
    
\caption{Results of Protocols 3 and 4 -- Reduction of the mean error over $M=100$ samples (in log-scale) in function of the sample size (from $n=500$ to $n=50~000$). {\small Empirical errors are computed over the following regular grids: (a)-(e) $[0;6]$, $\Delta x = \tfrac{6}{500}$; (f)-(h) $[-10;10]$, $\Delta \xi = 0.05$; (i)-(j) $[0;2.25]$, $\Delta a = \tfrac{1}{\sqrt{n}}$; (k) $[0;2]$, $\Delta a = \tfrac{1}{\sqrt{n}}$. }\label{fig:resultats34_vitesse}}%
\end{figure}

\begin{figure}[h]
    \begin{subfigure}[b]{0.5\textwidth} 
        \centering \includegraphics[width=\textwidth]{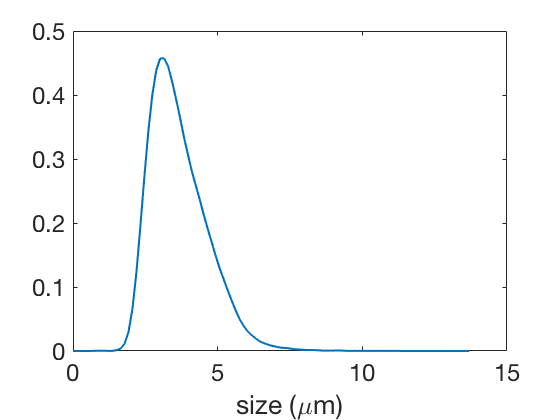} 
        \caption{\label{fig:Nexper}{\bf Estimation of the size distribution} \\ $\widehat U_{n,x}$ of $ U_{B,x}$  from an experimental sample \\ taken from~\cite{Eric}, $n=31,333$, $h=0.125.$ \vspace{0.8cm}}
    \end{subfigure}
    \begin{subfigure}[b]{0.5\textwidth} 
        \centering \includegraphics[width=\textwidth]{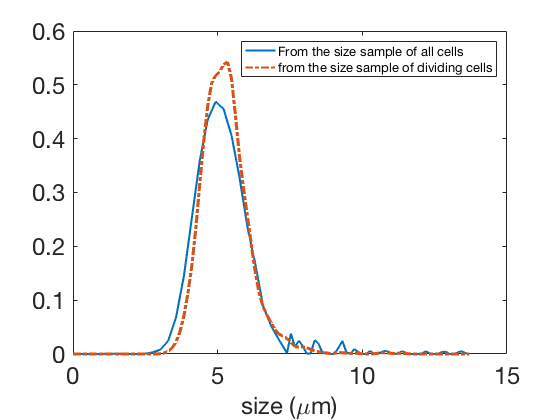}
        \caption{{\label{fig:BNexp}\bf Estimation of the size distribution of dividing cells ${\cal L}_B$:} 1/ estimation ${\cal L}_B$ from $\widehat U_{n,x}$ (plain blue line, Step 2 of Section~\ref{sec:estim:GB}), 2/ estimation from the experimental sample of dividing cells, for which $n_d=1,679$ and $h_d=0.167$ (dotted-dashed red line).}
    \end{subfigure}
    \caption{{\bf Testing the procedure on experimental data.} \\ Initial step: estimation of the size distribution}
    \end{figure}
\begin{figure}[h]
    \begin{subfigure}[b]{0.5\textwidth} 
        \centering \includegraphics[width=\textwidth]{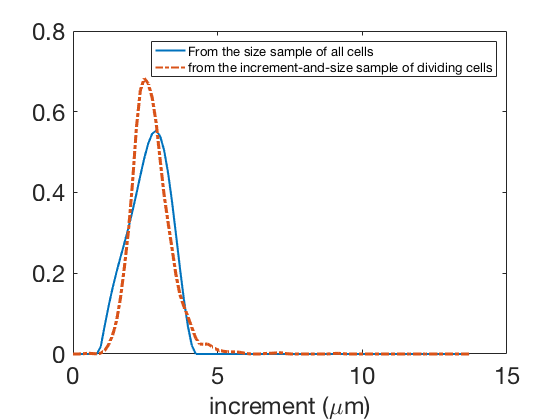} 
        
        \caption{\label{fig:fBexper}{\bf Estimation of $f_B(a)$}}
    \end{subfigure}
    \begin{subfigure}[b]{0.5\textwidth} 
        \centering \includegraphics[width=\textwidth]{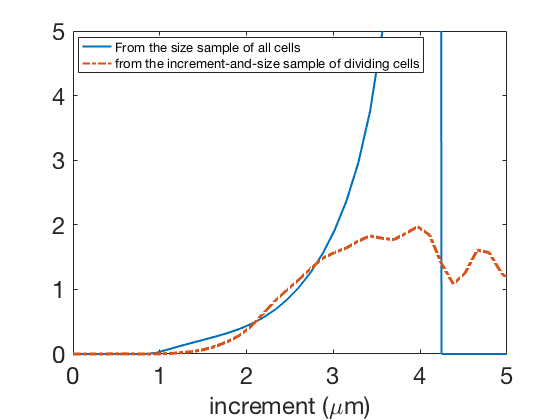}
        \caption{{\label{fig:Bexper}\bf Estimation of the division rate $B(a)$}}
    \end{subfigure}
    \caption{{\bf Testing the procedure on experimental data.} \\ Final step: estimation of the increment-structured division rate}
    \end{figure}


\clearpage

\bibliographystyle{plain}       
\bibliography{AdderModel_EstimateB}  



\end{document}